\DeclareMathAlphabet{\mathpzc}{OT1}{pzc}{m}{it}
\newcommand{\F}{\boldsymbol{\mathrm{F}}}
\newcommand{\Z}{\boldsymbol{\mathrm{Z}}}
\newcommand{\St}{\boldsymbol{\mathrm{S}}}
\newcommand{\Af}{\boldsymbol{\mathrm{A}}}
\newcommand{\Gl}{\boldsymbol{\mathrm{GL}}}
\newcommand{\M}{\boldsymbol{\mathrm{M}}}
\newcommand{\Ga}{\boldsymbol{\mathrm{G}_a}}
\newcommand{\Gag}{\boldsymbol{\mathrm{G}_a^*}}
\newcommand{\Gar}{\boldsymbol{\mathrm{G}_a}_{\rr}}
\newcommand{\Garr}{\boldsymbol{\mathrm{G}_a^*}_{\rr}}
\newcommand{\rr}{{(r)}}
\newcommand{\tot}{\mathrm{tot}}
\newcommand{\bld}{\boldsymbol}
\newcommand{\im}{{\rm im }}
\newcommand{\Vr}{ V_r }
\newcommand{\Vrr}{ V_r^* }
\newcommand{\HH}{H^{*,*}}
\newcommand{\Ext}{{ \rm Ext}}
\newcommand{\e}{\varepsilon}
\newcommand{\uExt}{{ \rm \underline{Ext}}}
\newcommand{\Hom}{{\rm Hom}}
\newcommand{\Homg}{{\rm Hom}_{\Gr}}
\newcommand{\uHom}{{\rm \underline{Hom}}}
\newcommand{\Gr}{\mathcal{G}\mathcal{R}}
\newcommand{\nil}{{\rm nil}}
\newcommand{\ch}{{\rm char}}
\newcommand{\kf}{\boldsymbol{\rm k}}
\renewcommand{\_}{\rule{1.2ex}{.5pt}\,}
\newtheorem{theorem}{Theorem}[section]
\newtheorem{lemma}[theorem]{Lemma}
\newtheorem{proposition}[theorem]{Proposition}
\newtheorem{corollary}[theorem]{Corollary}
\theoremstyle{definition}
\newtheorem{definition}[theorem]{Definition}
\newtheorem{example}[theorem]{Example}
\newtheorem{rk}[theorem]{Remark}
\title[Graded $1$-parameter subgroups and detection properties]{Graded $1$-parameter subgroups and detection properties}
\author{Camil I. Aponte Rom\'{a}n} 
\email{camili@gmail.com}
\urladdr{www.math.washington.edu/\textasciitilde camili} % Delete if not wanted.
\begin{document}
\maketitle

\tableofcontents
% turn off those nasty overfull and underfull hboxes
\hbadness=10000
\hfuzz=50pt

 \begin{abstract}

We use tools of representation theory to get a better understanding of the cohomology of graded group schemes. For that, we focus our attention on the case in which the base field is of characteristic $p > 0$. Using as inspiration the work on \cite{SFB1}, \cite{SFB2} and \cite{FP05}, we build the theory of graded $1$-parameter subgroups denoted by $\Vrr(G)$. We  give a natural homomorphism of bigraded $\kf$-algebras $\psi: H^{*, *}(G, \kf) \to \kf[\Vrr(G)],$ where $\kf[\Vrr(G)]$ is the bigraded coordinate ring for $\Vrr(G)$.  We show that $\psi$ is an $F$-monomorphism for a class of graded group schemes. This provides evidence that with the appropriate detection property, a Quillen-type result could exist for graded group schemes.

%In his landmark 1971 paper [18], Quillen gave a description of the cohomology algebra H?(G;k) modulo nilpotent elements as an inverse limit of the cohomology algebras of the elementary abelian p-subgroups of G.
%D. G. Quillen, The spectrum of an equivariant cohomology ring I, II, Annals of Math. 94 (1971), 549Ð572 and 573Ð602.

%We also define and develop the additive graded group scheme, graded Frobenius and the graded general linear group.
\end{abstract}

\section{Introduction}

Graded group scheme and graded group varieties are defined, characterized, and studied in \cite{AR1}. The purpose of this paper is to use tools of representation theory to get a better understanding of the cohomology of graded group schemes and graded group varieties. For that, we focus our attention on the case in which the base field is of characteristic $p > 0$. Using the work of \cite{SFB1} and \cite{SFB2} as inspiration, we build the theory of graded $1$-parameter subgroups denoted by $\Vrr(G)$. Ungraded $1$-parameter subgroups play the same role that elementary abelian $p$-subgroups or shifted subgroups play when studying finite groups. An analogue of Quillen's result holds: the cohomology classes of infinitesimal group schemes can be detected, up to nilpotents, by $1$-parameter subgroups. Wilkerson explored this question for graded cocommutative Hopf algebras in \cite{Wi}. In his work, elementary sub-Hopf algebras take the place of elementary abelian $p$-subgroups. He shows that, in this context, Quillen's result no longer holds. More precisely, he shows that for each prime $p$, there exists a graded cocommutative Hopf algebra and a nonnilpotent cohomology class which restricts to zero on every abelian sub-Hopf algebra. We merge both concepts of $1$-parameter subgroups and elementary Hopf algebras and provide evidence that with some modification a result like that of Quillen could exist for graded group schemes. 

As part of the exploration of this idea we give a natural homomorphism of bigraded $\kf$-algebras $\psi: H^{*, *}(G, \kf) \to \kf[\Vrr(G)],$ where $\kf[\Vrr(G)]$ is the bigraded coordinate ring for $\Vrr(G)$. From \cite{SFB1} and \cite{SFB2} we have that for any infinitesimal group scheme of height $\leq r$, the ungraded version of $\psi$ is an $F$-isomorphism. A result of this type allows us to compute the cohomology of a group scheme $G$ (up to nilpotents) in a fairly straightforward way. We can do this by embedding $G$ into some $\Gl_n$ and using the defining equations of this embedding to describe $\kf[\Vr(G)]$ as a quotient of $\kf[\Vr(\Gl_n)]$.

Given the usefulness of such a result, it is a natural question to ask if we get a similar result in our case. In our quest to answer this question we compute some examples of the map $\psi$ and we show that they are in fact $F$-isomorphisms. Then, we define some detection properties of the cohomology of a gr-group scheme and study their relation. Finally, we (partially) answer the question for a class of gr-group schemes that satisfy one of these detection properties. 

\section{Definitions and examples}

We recall some definitions from \cite{AR1}. 
\begin{definition}
Let $\Gr$ be the category of (finitely generated) graded commutative $\kf$-algebras. A
representable functor $G: \Gr \to (groups)$ is called an \textit{affine graded group
scheme}.  We will call them \textit{gr-group schemes} for short. The graded algebra representing $G$ is denoted by $\kf[G]$ and is called the \textit{coordinate algebra} of $G$. We will drop the word affine from now on, as all our gr-schemes will be assumed to be affine. 

\end{definition}

Recall that a graded algebra is graded commutative if, for $a, b$ homogeneous elements in $A$, we have that $ab = (-1)^{|a||b|}ba$. As in the ungraded case, by Yoneda's Lemma there is an equivalence of categories between gr-group schemes and
graded commutative Hopf algebras.

\begin{definition}
We denote $\kf[x_1, \ldots, x_n]^{gr}$ to be the \emph{graded polynomial ring} over $\kf$ in $n$-variable, where $x_ix_j = (-1)^{|x_i||x_j|}x_jx_i$. Note that if $\ch(\kf) \neq 2$, then $x_i^2 = 0$ if $|x_i|$ is odd. 
\end{definition}

\begin{definition}
 We say that a gr-group scheme $G$ is a \textit{finite gr-group scheme}
if $\kf[G]$ is finite dimensional. In that case we can define $\kf G$ as the graded dual of $\kf[G]$; $\kf G$ is a called the \textit{group algebra} for $G$. 
\end{definition}

\begin{definition}
 We say that a gr-group scheme $G$ is a \textit{positive gr-group scheme}
if $\kf[G]$ is positively graded; that is $\kf[G] = \bigoplus_{i \geq 0} (\kf[G])_i$. 
\end{definition}

\begin{definition}
Let $G$ be a gr-group scheme, and let $A = \kf[G]$. If $A_0$ is a local ring and $A$ is positively graded, of finite type (that is, each $A_i$ is finite dimensional) we say that $G$ is a \emph{graded group variety (gr-group variety)}. 
\end{definition}

\begin{example}[Dual Steenrod subalgebra $A(1)$]\label{A1}

Consider $A(1) = \F_2[\xi_1, \xi_2]^{gr}/(\xi_1^4, \xi_2^2)$, where $|\xi_1| = 1$ and $|\xi_2| =
3$. The gr-Hopf algebra structure on $A(1)$
is given by $\Delta(\xi_1) = \xi_1 \otimes 1+ 1 \otimes \xi_1$, and
$\Delta(\xi_2) = \xi_2 \otimes 1 + \xi_1^2 \otimes \xi_1 + 1 \otimes \xi_2$.

As sets $\Homg(A(1), R) =  G_1(R) \times G_3(R)$ where $G_1(R) = \nil^{4}_1(R)$ and $G_2(R) = \nil^2_3(R)$ with product given by $(x,u) \ast (y,v) = (x+y, u+v+x^2y)$ and inverse given by $(x, u)^{-1} = (x, u+x^3)$. We will denote the gr-group scheme represented by $A(1)$ by 
$\St_1$.\end{example}
\section{Graded Frobenius}

In the ungraded setting, the additive group scheme $\Ga$ plays an important role. More precisely its $r$th Frobenius kernel $\Gar$ is of main importance in representation theory. Given a group scheme $G$, the infinitesimal $1$-parameter subgroups of height $\leq r$ in $G$ are defined as group homomorphism from $\Gar \to G$ and denoted by $\Vr(G)$. In the following sections we define the additive gr-group scheme $\Gag$ and its $r$th Frobenius kernel, $\Garr$. To do so, we first give precise definitions of the graded Frobenius map and twist.  We define the graded general linear group, $\Gl_I$. 
As in the ungraded case, $\Gl_I$ proves useful when computing $\Vrr(G)$.

\begin{definition}
Let $A$ be a gr-commutative ring. We define the \textit{Frobenius $p$th power map} $F:A \to A$ such that $F(a) = a^p$. 
\end{definition}

\begin{definition}
Let $A$ be a gr-commutative ring and $M$ a gr-module over $A$. For any $r \geq 0$, the \textit{$r$-th Frobenius twist} of $M$ is the $A$-module $M^\rr = M  \otimes_{F^r} A$. We grade $M^\rr$ as follows: $(M^\rr)_k = \bigoplus_{p^ri+j = k} M_i \otimes_{F^r} A_j$. 
\end{definition}
The degree is well-defined and on $M^\rr$ a homogeneous element $m \otimes 1$ has degree $p^r|m|$. Moreover on $M^\rr$,  $|m \otimes a^{p^r}| = p^r|m| + p^r|a| = p^r(|m|+|a|)$ and also $m \otimes a^{p^r} = am \otimes 1$ which is of degree $p^r(|m|+|a|)$.

\begin{proposition}Let $A$ be a gr-commutative ring and $M$ and $N$ be gr-modules over $A$. For $r \geq 0$, the graded $r$-th Frobenius twist $I^{r}: M \mapsto M^\rr$ is an additive functor from the category of graded $A$ modules to itself. Moreover for any $M,N$ $$(M \otimes_A N)^\rr = M^\rr \otimes_A N^\rr; \,\, (M^\#)^\rr = (M^\rr)^{\#}.$$
\end{proposition}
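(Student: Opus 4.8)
The plan is to verify each assertion by reducing to the underlying ungraded statements and then checking that the chosen grading is compatible. First I would establish additivity: given $A$-linear $f\colon M\to N$, define $f^\rr = f\otimes_{F^r}\mathrm{id}_A\colon M\otimes_{F^r}A\to N\otimes_{F^r}A$. Since $\otimes_{F^r}A$ is base change along the ring map $F^r\colon A\to A$, functoriality and additivity ($ (f+g)^\rr = f^\rr+g^\rr$, $(\mathrm{id}_M)^\rr=\mathrm{id}_{M^\rr}$, compatibility with composition) are immediate from the corresponding properties of ordinary base change. The only thing to add is that $f^\rr$ is homogeneous of degree $0$ with respect to the grading $(M^\rr)_k=\bigoplus_{p^ri+j=k}M_i\otimes_{F^r}A_j$: this holds because $f$ is degree-preserving, so $f^\rr$ sends the summand $M_i\otimes_{F^r}A_j$ into $N_i\otimes_{F^r}A_j$, i.e. into the degree-$k$ part of $N^\rr$ for $p^ri+j=k$.

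Next I would treat the tensor-product identity $(M\otimes_A N)^\rr=M^\rr\otimes_A N^\rr$. As ungraded $A$-modules this is the standard base-change isomorphism $(M\otimes_A N)\otimes_{F^r}A\cong (M\otimes_{F^r}A)\otimes_A(N\otimes_{F^r}A)$, sending $(m\otimes n)\otimes a$ to $(m\otimes a)\otimes(n\otimes 1)$ (well-defined and an isomorphism because $F^r$ is a ring map). I would then check this isomorphism is degree-preserving: a generator $m\otimes n$ of degree $|m|+|n|$ in $M\otimes_A N$ contributes an element of $(M\otimes_A N)^\rr$ of degree $p^r(|m|+|n|)+|a|$, while on the right $m\otimes a$ has degree $p^r|m|+|a|$ and $n\otimes 1$ has degree $p^r|n|$, summing to $p^r(|m|+|n|)+|a|$; so the bijection respects degrees and hence is an isomorphism of graded $A$-modules. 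The sign conventions of graded commutativity do not interfere here because no transposition of homogeneous factors is involved in this particular map.

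For $(M^\#)^\rr=(M^\rr)^\#$, where $M^\#$ denotes the graded dual, I would argue similarly: for $M$ finitely generated and free (or more generally where the graded dual is well-behaved) there is a natural pairing identification $\mathrm{Hom}_A(M,A)\otimes_{F^r}A\cong\mathrm{Hom}_A(M\otimes_{F^r}A,A)$, and one checks on homogeneous pieces that $\varphi\otimes a$ of degree $-p^r|\varphi|+|a|=p^r(-|\varphi|)+|a|$ corresponds to the functional of the same degree on $M^\rr$. Here I must be careful to use the grading convention for $M^\#$ consistently (the degree of a functional is minus the degree on which it is supported) and to track that $F^r$ raises the ``$p^r$-factor'' of the grading uniformly, so the identification of the two graded modules is again degree-$0$.

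The main obstacle I anticipate is not the algebra of base change — that is routine — but rather pinning down the precise hypotheses under which the graded dual statement $(M^\#)^\rr=(M^\rr)^\#$ holds (finite generation/finite type is presumably needed so that $M^\#$ is nice and double duality and the pairing isomorphism are available) and making sure the grading conventions for $M^\#$, for $M\otimes_{F^r}A$, and for $\mathrm{Hom}$ in the graded category are mutually consistent, including any Koszul signs. I would state the needed finiteness hypothesis explicitly (consistent with $\Gr$ consisting of finitely generated algebras and gr-modules being of finite type) and then the sign bookkeeping becomes a short check rather than a genuine difficulty.
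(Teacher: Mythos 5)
Your proposal is correct and follows essentially the same route as the paper: reduce each identity to the known ungraded base-change statement and then verify on homogeneous generators that the gradings on both sides agree (the paper does exactly this, writing out the direct-sum decompositions of $(M\otimes N)^\rr_k$ and $(M^\rr\otimes N^\rr)_k$ and matching index sets, and identifying $f\otimes_{F^r}1$ with $\widehat f$ of the same degree $p^r|f|$ for the dual). Your added remark that the dual identity needs a finiteness hypothesis is a reasonable refinement the paper leaves implicit; the only other difference is that the paper allows morphisms of arbitrary degree and records that $I^\rr(f)$ has degree $p^r|f|$, whereas you restrict to degree-preserving maps.
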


\begin{proof}
Let $f:M \to N$ be a graded map, then $I^\rr(f) : (m \otimes 1 \mapsto f(m) \otimes 1)$. Now $|m \otimes 1| = p^r|m| \mapsto p^r|f(m)| = p^r|f| + p|m|$. Hence $I^\rr(f)$ is a graded map of degree $p^r|f|$ and it easily follows that $I^\rr$ is an additive functor. The equalities in the statement are true in the ungraded case. For the graded case they follow the same way. We just need to check that the gradings are compatible, which they are by the computations below.  
\begin{align*}
(M \otimes N)^\rr_k &  = \bigoplus_{p^ri +j = k} ( \bigoplus_{l+m = i} M_l \otimes N_m ) \otimes_{F^r} A_j \\
& = \bigoplus_{\substack{p^ri+j = k\\ l+m = i\\ u+v = j}} (M_l \otimes_{F^r} A_u) \otimes (N_m \otimes_{F^r} A_v) ) )\\
(M^\rr \otimes N^\rr)_k &= \bigoplus_{l+m = k} M^\rr_l \otimes N^\rr_m \\
& = \bigoplus_{l+m = k} ( \bigoplus_{p^ri+j = l} M_i \otimes_{F^r} A_j ) \otimes (\bigoplus_{p^ru+v = m} N_u \otimes A_v)\\
& = \bigoplus_{\substack{l+m = k \\ p^ri+j = l \\ p^ru+v = m}} ( M_i \otimes_{F^r} A_j ) \otimes ( N_u \otimes_{F^r} A_v)
\end{align*}

A homogeneous element in $ (M^\#)^\rr$ is of the form $f \otimes_{F^r} 1$, where $f: M \to A$ is a graded map and $|f \otimes_{F^r} 1| = p^r|f|$. This element corresponds to $\widehat{f} \in (M^\rr)^\#$ where $\widehat{f}(m \otimes_{F^r} 1) = f(m) \otimes  1$ with $f : M \to A$ and $|\widehat{f}| = p^r|f|$. 
\end{proof}

\begin{definition}
Given a gr-group scheme $G$ over a gr-commutative ring $A$, since $I^\rr$ is a functor, $A[G^\rr] = A[G] \otimes_{F^r} A$ is a gr-Hopf algebra. 
Then $F^r:A \to A$ ($a \mapsto a^{p^r}$) gives a map $F^r: G \to G^\rr$; we denote by $G_{(r)} = G \times_{G^\rr} \{e\}$ the gr-group scheme which is the kernel of $F^r$. It has coordinate algebra $A[G]/ (\{ x^{p^r} \,\, | \,\,  x \in \ker(\e)\}A[G])$. At the level of gr-Hopf algebras $F^r: A[G^\rr] \to A[G]$ is the map that sends $a \otimes \alpha \mapsto a^{p^r} \otimes \alpha$. We say that $G_\rr$ is the \emph{graded $r$th Frobenius kernel of $G$}. 
\end{definition}

\begin{definition}
A gr-group scheme $G$ is of \emph{height $r$}, if $r$ is the smallest positive integer such that $a^{p^r} = 0$ for every element $a$ in the augmentation ideal of $\kf[G]$, where the augmentation ideal of $\kf[G]$ is the kernel of the counit map $\e: \kf[G] \to \kf$.\end{definition}

\section{Additive graded group scheme}
For $p >2$, let $\Gag$ be the gr-group scheme represented by $\kf[t,s]^{gr}$ where $|t|$ is even and $|s|$ is odd and both $t$ and $s$ are primitive. We call $\Gag$ \textit{additive gr-group scheme}. We would like to point out the difference between the additive gr-group scheme $\Gag$, and the ungraded additive group scheme  $\Ga$. Note that we do not fix the degrees of $t$ nor $s$. They are left as placeholders. 

In the ungraded setting $\kf[\Ga] = \kf[t]$ and  $\Ga(R) = (R, +)$ for any commutative algebra $R$, hence $\Ga$ is the group scheme that describes the additive group structure for any algebra $R$. In the graded case, for any graded algebra $R$, $\Gag(R) = (R_{|t|} \times R_{|s|}, +)$. We do not get the additive group structure of the whole ring $R$, we only get the additive structure of the parts of degree equal to $t$ and $s$. In order to get the full additive structure of $R$ we would need to consider the gr-group scheme represented by $\bigotimes_{i \in Z} \kf[x_i]^{gr}$  where $|x_i| = i$ and $x_i$ is primitive. This gr-Hopf algebra is too big (not noetherian). It turns out that considering $\Gag$ is enough in our setting. In particular, the $r$th Frobenius kernel of $\Gag$, denoted by $\Garr$ and represented by $\kf[t,s]^{gr}/(t^{p^r})$ would be our main protagonist in the development of graded $1$-parameter subgroups and in the further understanding of the cohomology of a gr-group scheme. 

For $p = 2$, our definition of $\Gag$ is slightly different. This is because this case  `behaves' like the ungraded case; our algebras are commutative, not just graded commutative. It is enough to let $\kf[\Gag] = \kf[t]^{gr}$ where $t$ can be even or odd, and in that case, $\kf[\Garr] = \kf[t]^{gr}/(t^{2^r}) = \kf[t]/(t^{2^r})$. 

\section{Graded general linear group}\label{GLI}

\begin{definition}
Let $I = (I_1, \ldots, I_n) \in \Z^n$, then for any graded commutative algebra $A$ we define the gr-group scheme

$$\Gl_I(A) = \{(a_{ij}) \in \Gl_n(A) \,\, | \,\, |a_{ij}|= I_j-I_i \}.$$

The coordinate algebra for $\Gl_I$ is $\kf[\Gl_I] = \kf[x_{ij}, t]^{gr}_{1 \leq i, j \leq n}/(\det(x_{ij})t -1)$, where $\Delta(x_{ij}) = \sum x_{ik} \otimes x_{kj}$, $\e(x_{ij}) = \delta_{ij}$, and $|x_{ij}| = I_j - I_i$.  
The gr-group scheme $\Gl_I$ is called the \textit{graded general linear group indexed by $I$}.
\end{definition}

\begin{definition}
Let $I = (I_1, \ldots, I_n) \in \Z^n$, then for any graded commutative algebra $A$ we define the graded affine scheme

$$\M_I(A) = \{(a_{ij}) \in \M_n(A) \,\, | \,\, |a_{ij}|= I_j-I_i \}.$$

The coordinate algebra for $\M_I$ is $\kf[\M_I] = \kf[x_{ij}]^{gr}_{1 \leq i, j \leq n}$, where $\Delta(x_{ij}) = \sum x_{ik} \otimes x_{kj}$, $\e(x_{ij}) = \delta_{ij}$, and $|x_{ij}| = I_j - I_i$.  
We call it, the graded affine scheme $\M_I$ of \textit{matrices indexed by $I$}.
\end{definition}

\begin{rk}
Note that  $\Gl_I$ is a gr-group scheme as $\Delta$ and $\e$ gives an antipode $S$ (using Cramer's rule). Instead, $\M_I$ is just a graded affine scheme, that is, $\M_I$ is represented by $\kf[\M_I]$, but for a gr-commutative algebra $A$, $\M_I(A) = \Homg(\kf[\M_I], A)$, is not a group.

The quotient map $\kf[\M_I] \to \kf[\Gl_I]$ gives an inclusion $\Gl_I \hookrightarrow \M_I$ and any gr-group scheme mapping to $\M_I$ factors through $\Gl_I$. 
\end{rk}

\begin{proposition}\label{coaction}
Let $G$ be a gr-group scheme. If $M$ is a finite dimensional graded $\kf[G]$-comodule of dimension $n$, then the coaction corresponds to a map $G \to \Gl_I$ for some $I = (I_1, \ldots, I_n)$. 
\end{proposition}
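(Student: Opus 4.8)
The plan is to run the standard Tannakian translation between finite-dimensional comodules and matrix representations, tracking the grading at every step. First I would choose a homogeneous basis $m_1,\dots,m_n$ of $M$ and set $I_i := |m_i|$, so that $I=(I_1,\dots,I_n)\in\Z^n$. The coaction $\rho : M \to M\otimes\kf[G]$ is, by the definition of a graded comodule, a degree-$0$ map of graded $\kf$-spaces; writing $\rho(m_j)=\sum_i m_i\otimes a_{ij}$ with $a_{ij}\in\kf[G]$ and comparing homogeneous components forces each $a_{ij}$ to be homogeneous with $|a_{ij}| = |m_j|-|m_i| = I_j-I_i$. That is exactly the degree attached to the generator $x_{ij}$ of $\kf[\M_I]$, so $x_{ij}\mapsto a_{ij}$ is a well-defined homomorphism of graded algebras $\kf[\M_I]\to\kf[G]$, i.e.\ a morphism of graded affine schemes $G\to\M_I$.

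Next I would check that this morphism is compatible with the coalgebra structures, which is where the comodule axioms enter. The counit axiom $(\mathrm{id}\otimes\e)\circ\rho=\mathrm{id}_M$ gives $\sum_i m_i\,\e(a_{ij})=m_j$, hence $\e(a_{ij})=\delta_{ij}=\e(x_{ij})$. Coassociativity $(\rho\otimes\mathrm{id})\circ\rho=(\mathrm{id}\otimes\Delta)\circ\rho$, evaluated on $m_j$, gives $\sum_{i,k} m_i\otimes a_{ik}\otimes a_{kj}=\sum_i m_i\otimes\Delta(a_{ij})$, hence $\Delta(a_{ij})=\sum_k a_{ik}\otimes a_{kj}$, matching $\Delta(x_{ij})$. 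Thus $\kf[\M_I]\to\kf[G]$ is a homomorphism of graded bialgebras, equivalently $G\to\M_I$ is a homomorphism of gr-monoid schemes.

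Finally I would promote this to a homomorphism $G\to\Gl_I$. Since $G$ is a gr-group scheme, the remark following the definition of $\M_I$ in Section~\ref{GLI} applies and the morphism factors through $\Gl_I$. Concretely, the antipode identities for $\kf[G]$ applied to $\Delta(a_{ij})=\sum_k a_{ik}\otimes a_{kj}$ and $\e(a_{ij})=\delta_{ij}$ give $\sum_k a_{ik}S(a_{kj})=\delta_{ij}=\sum_k S(a_{ik})a_{kj}$, so $\big(S(a_{ij})\big)$ is a two-sided inverse of $(a_{ij})$ over $\kf[G]$; hence $\det(a_{ij})$ is a unit and the map extends to $\kf[\Gl_I]=\kf[\M_I][t]^{gr}/(\det(x_{ij})t-1)$ by $t\mapsto\det(a_{ij})^{-1}$. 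The construction reverses: pulling back the tautological $n$-dimensional graded comodule of $\Gl_I$ (the space $\kf^n$ with $i$-th basis vector in degree $I_i$ and coaction $e_j\mapsto\sum_i e_i\otimes x_{ij}$) along a homomorphism $G\to\Gl_I$ returns a graded $\kf[G]$-comodule, and the two operations are mutually inverse, which is the asserted correspondence.

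I expect the only point needing genuine care beyond the ungraded argument to be this last invertibility/determinant step: one must make sure $\det(x_{ij})$ is a well-defined homogeneous degree-$0$ element of the graded-commutative ring $\kf[\M_I]$ (the entries $x_{i\sigma(i)}$ need not be even, so their products carry Koszul signs and a fixed ordering convention is required) and that these signs do not obstruct the identities $\sum_k a_{ik}S(a_{kj})=\delta_{ij}$ or the multiplicativity $\det(a_{ij})\cdot\det\!\big((S(a_{ij}))\big)=1$. This is, however, precisely the consistency already built into the definition of $\kf[\Gl_I]$, and it is controlled here by the fact that every monomial appearing in these expressions has total degree $0$; once that observation is in place, the rest of the verification is routine.
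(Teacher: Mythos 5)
Your proposal is correct and follows essentially the same route as the paper: pick a homogeneous basis, read off $I$ from the degrees, verify $|a_{ij}|=I_j-I_i$ and the bialgebra identities from the comodule axioms, and then factor the resulting map $G\to\M_I$ through $\Gl_I$ using that $G(R)$ is a group. Your extra discussion of the antipode and the graded determinant just makes explicit the factoring step that the paper delegates to its remark following the definition of $\M_I$.
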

\begin{proof}
Let $M$ be a graded finite dimensional $\kf[G]$ right-comodule with comodule map $\rho: M \to M \otimes \kf[G] $. Let $\{v_1, \ldots, v_n \}$ be a homogeneous basis for $M$.  Let $I = (I_1, \ldots, I_n) = (|v_1|, \ldots, |v_n|)$. Then $\rho(v_j) = \sum v_i \otimes a_{ij} $ where $|a_{ij}| = I_j-I_i$.  Since $v_j = \sum  v_i\e(a_{ij})$ and the $v_i$'s are linearly independent we have that $\e(a_{ij}) = \delta_{ij}$. 

The map $\phi: \kf[\M_I] \to \kf[G]$ given by $x_{ij} \mapsto a_{ij}$ is a gr-bialgebra morphism. To see that, $\phi(\e(x_{ij})) = \phi(\delta_{ij}) = \delta_{ij} = \e(a_{ij}) = \e(\phi(x_{ij})) $. Also since $(\rho \otimes id) \circ \rho = (id \otimes \Delta) \circ \rho$ we get that $\sum_i v_i \otimes \Delta(a_{ij}) = \sum_i v_i \otimes \sum_k a_{ik} \otimes a_{kj}$, hence $\Delta(a_{ij}) = \sum_k a_{ik} \otimes a_{kj}$. Hence $\Delta(\phi(x_{ij})) = \phi(\Delta(x_{ij}))$.  

Then the map $\phi$ yields a gr-algebra map from  $\phi^*: G(R) \to \M_I(R)$ given by $\phi^*(f)(x_{ij}) = f(a_{ij})$ for $f \in G(R)$. Since $G(R)$ is actually a group then $\phi^*$ must factor through $\Gl_I(R)$ as desired. 
\end{proof}

\begin{proposition}\label{finGln}
Let $G$ be a finite gr-group scheme, then there is a closed gr-subgroup embedding $G \hookrightarrow \Gl_I$ for some index $I$. 
\end{proposition}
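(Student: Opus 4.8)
The plan is to mimic the classical argument (Jantzen, \emph{Representations of Algebraic Groups}, I.8.6; or Waterhouse 3.4) that every finite group scheme embeds in some $\Gl_n$, adapting it to the graded setting with the help of Proposition \ref{coaction}. Since $G$ is a finite gr-group scheme, $\kf[G]$ is finite dimensional, and the regular representation gives $\kf[G]$ the structure of a graded $\kf[G]$-comodule (via the coproduct $\Delta$), which is finitely generated and in fact finite dimensional over $\kf$. So the ``comodule'' hypothesis of Proposition \ref{coaction} is automatically satisfied here.

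First I would take $M = \kf[G]$ with the right regular coaction $\rho = \Delta : \kf[G] \to \kf[G] \otimes \kf[G]$, choose a homogeneous $\kf$-basis $\{v_1,\dots,v_n\}$ of $\kf[G]$, and set $I = (|v_1|,\dots,|v_n|)$. By Proposition \ref{coaction} this data yields a morphism of gr-group schemes $\phi^* : G \to \Gl_I$, corresponding to the gr-bialgebra map $\phi : \kf[\Gl_I] \to \kf[G]$, $x_{ij}\mapsto a_{ij}$, where $\Delta(v_j) = \sum_i v_i\otimes a_{ij}$. Since $\Gl_I$ and $G$ are both gr-group schemes and $\phi$ is automatically compatible with antipodes (a bialgebra map between Hopf algebras respects the antipode), $\phi$ is a map of gr-Hopf algebras. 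The content of the proposition is then exactly that $\phi$ is \emph{surjective}, equivalently that $\phi^*$ is a closed embedding: a closed gr-subgroup embedding $G\hookrightarrow\Gl_I$ corresponds to a surjection of the coordinate gr-Hopf algebras.

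To prove surjectivity of $\phi$, I would use the standard ``counit trick''. For each basis vector $v_j$ write $\Delta(v_j) = \sum_i v_i \otimes a_{ij}$ with $a_{ij}\in\kf[G]$ homogeneous of degree $|v_j|-|v_i|$. Applying $\e\otimes \mathrm{id}$ to $\Delta(v_j)$ and using the counit axiom $(\e\otimes\mathrm{id})\Delta = \mathrm{id}$ gives $v_j = \sum_i \e(v_i)\, a_{ij}$; hence every basis element $v_j$, and therefore every element of $\kf[G]$, lies in the image of $\phi$. This shows $\phi$ is surjective. One should check that the scalars $\e(v_i)$ are legitimate — they are, since $\e: \kf[G]\to\kf$ is the counit — and that the degree bookkeeping is consistent, which it is because $\e$ kills everything outside degree $0$, so only the degree-$0$ terms $v_i$ contribute and $|a_{ij}| = |v_j|$ in those terms. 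Finally, $\phi$ surjective means $G = \Spec^{gr}(\kf[G])$ is the closed gr-subscheme of $\Gl_I$ cut out by $\ker\phi$, and since $\phi$ is a Hopf algebra map this closed subscheme is a gr-subgroup, so $\phi^*: G\hookrightarrow \Gl_I$ is the desired closed gr-subgroup embedding.

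The step I expect to require the most care is not the surjectivity itself — that is the soft part — but rather making sure all the grading conventions line up: that $I$ is a genuine element of $\Z^n$ (finiteness of $\kf[G]$ guarantees $n<\infty$), that the $a_{ij}$ have the degrees demanded by the definition of $\Gl_I$ (degree $I_j - I_i$, which follows since $\Delta$ is a graded map of degree $0$), and that one may invoke Proposition \ref{coaction} verbatim with the regular comodule. If $\kf[G]_0$ is not all of $\kf$ — e.g.\ if $G$ is finite but not connected or the degree-$0$ part is large — one simply carries more basis vectors in degree $0$; the argument is unaffected. There is no serious obstacle; the proposition is the graded analogue of a routine embedding theorem, and the only real input beyond Proposition \ref{coaction} is the finiteness of $\kf[G]$.
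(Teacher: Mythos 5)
Your proposal is correct and follows essentially the same route as the paper: both take the regular comodule $\kf[G]$ with a homogeneous basis, form the gr-bialgebra map $x_{ij}\mapsto a_{ij}$ from $\Delta(v_j)=\sum_i v_i\otimes a_{ij}$, and deduce surjectivity from the counit identity $v_j=\sum_i\e(v_i)a_{ij}$, so that the map factors through $\Gl_I$. The only cosmetic difference is that you invoke Proposition \ref{coaction} for the bialgebra verification while the paper repeats that computation inline.
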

\begin{proof}

Since $\kf[G]$ is assumed to be finitely dimensional, let $f_1, \ldots, f_n$ be a homogeneous basis for $\kf[G]$ and $I = (|f_1|, \ldots, |f_n|)$. Since $\kf[G]$ is a finite dimensional comodule over itself via the comultiplication map, write $\Delta(f_j) = \sum f_i \otimes g_{ij}$. 

We claim that $(g_{ij}) \in \Gl_I(\kf[G])$. Consider the map $\phi: \kf[\M_I] \to \kf[G]$, where $x_{ij} \mapsto g_{ij}$. We can check that $\phi$ is a gr-bialgebra map. 
Note that since $f_j = \sum \e(f_i)g_{ij}$ and $(\Delta \otimes id) \circ \Delta = (id \otimes \Delta) \circ \Delta$ we get that $\sum_i f_i \otimes \Delta(g_{ij}) = \sum_i f_i \otimes \sum_k g_{ik} \otimes g_{kj}$.  Since the $f_i$'s form a minimal set we get that  $\Delta(g_{ij}) = \sum_k g_{ik} \otimes g_{kj}$. Hence $\Delta(\phi(x_{ij})) = \phi(\Delta(x_{ij}))$. Since $(id \otimes \e) \circ \Delta = id$, we get that $f_j = \sum_i f_i \e(g_{ij})$, which gives that $\e(g_{ij}) = \delta_{ij}$, hence $\e(\phi(x_{ij})) = \phi(\e(x_{ij}))$. Since $(\e \otimes id)\circ \Delta = id$  we get that $f_j = \sum_i \e(f_i) g_{ij}$  and it follows that $\{g_{ij} \}_{ 1 \leq i,j \leq n}$ is a generating set for $\kf[G]$. Therefore $\phi$ is a surjective gr-bialgebra map which yields a map $\phi^*: G(R) \to \M_I(R)$.  Since $G(R)$ is a group then $\phi^*$ must factor through $\Gl_I(R)$ as desired.
\end{proof}

\begin{proposition}(From \cite[3.3]{W})
Let $A$ be a (graded) finitely generated Hopf algebra. Let $V$ be a (graded) comodule for $A$.  Then any (homogeneous) $v \in V$ is contained in a finite dimensional (graded) subcomodule. 
\end{proposition}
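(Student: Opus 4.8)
The plan is to recognize this as the standard local finiteness of comodules, with the grading carried along at no extra cost, and to exhibit an explicit finite-dimensional subcomodule. Writing the coaction as $\rho\colon V \to V \otimes A$ in the right-comodule convention of Proposition~\ref{coaction}, I would start from an expression $\rho(v) = \sum_{i=1}^{n} v_i \otimes a_i$ in which the $a_i \in A$ are chosen linearly independent and the $v_i \in V$; when $v$ is homogeneous and $\rho$ is a graded map, the $a_i$ and $v_i$ can be taken homogeneous with $|v_i| + |a_i| = |v|$. My candidate subcomodule is $W = \mathrm{span}_{\kf}\{v_1, \dots, v_n\}$, which is finite dimensional and, in the graded case, graded; so it suffices to check that $v \in W$ and that $\rho(W) \subseteq W \otimes A$.

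For the first point I would apply $\mathrm{id}_V \otimes \e$ to $\rho(v) = \sum_i v_i \otimes a_i$ and use the counit identity to get $v = \sum_i \e(a_i)\, v_i \in W$. For the second, it is enough to show $\rho(v_i) \in W \otimes A$ for each $i$, and here is where coassociativity does the work. I would extend $\{a_1, \dots, a_n\}$ to a homogeneous $\kf$-basis $\{a_\alpha\}_{\alpha \in J}$ of $A$, write $\rho(v_i) = \sum_{\alpha} v_{i\alpha} \otimes a_\alpha$ (with almost all $v_{i\alpha} = 0$) and $\Delta(a_i) = \sum_{\alpha,\beta} m^{i}_{\alpha\beta}\, a_\alpha \otimes a_\beta$, and then compare the two sides of $(\rho \otimes \mathrm{id}_A)\rho(v) = (\mathrm{id}_V \otimes \Delta)\rho(v)$ in $V \otimes A \otimes A$. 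Matching coefficients of the basis vectors $a_\alpha \otimes a_\beta$ of $A \otimes A$ should yield $v_{i\alpha} = \sum_{j=1}^{n} m^{j}_{\alpha i}\, v_j$ for $i \le n$, together with $v_{i\beta} = 0$ for all $\beta \notin \{1, \dots, n\}$, so that $\rho(v_i) = \sum_\alpha v_{i\alpha} \otimes a_\alpha$ lies in $W \otimes A$. In the graded case the chosen $a_\alpha$ are homogeneous and $\rho$, $\Delta$ are graded maps, hence the $v_{i\alpha}$ are homogeneous and $W$ is automatically a graded subcomodule.

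I do not expect a genuine obstacle here, since this is classical, but the one place to be careful is the step of enlarging $\{a_i\}$ to an actual basis before comparing coefficients: the $a_i$ alone need not span $A$, so without this enlargement one cannot legitimately read off coefficients in $A \otimes A$, and in the graded setting one must check that the enlargement can be taken homogeneous (it can). It is also worth noting that the finite generation of $A$ plays no role in the argument; it appears in the statement only because it is the standing hypothesis on coordinate algebras elsewhere in the paper.
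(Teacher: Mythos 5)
Your argument is correct and is essentially the paper's own proof: both expand $\rho(v)$ over a (homogeneous) basis of $A$ and use coassociativity to conclude that the span of the finitely many nonzero $v_i$ (together with $v$, which your counit step shows is already in that span) is a finite dimensional graded subcomodule. The only quibble is your side claim that $v_{i\beta}=0$ for $\beta\notin\{1,\dots,n\}$, which is neither what the coefficient comparison yields nor needed --- the identity $v_{i\alpha}=\sum_{j}m^{j}_{\alpha i}v_j$ holds for \emph{every} $\alpha$ and already places $\rho(v_i)$ in $W\otimes A$ --- so the proof stands without it.
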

\begin{proof}
Let $\{a_{i} \}$ be a (homogeneous) basis for $A$ then $\rho(v) = \sum v_i \otimes a_i$ where all but finitely many $v_i$'s are zero (they are homogeneous). Write $\Delta(a_i) = \sum r_{ijk} a_j \otimes a_k$. 
Then 
$$\sum \rho(v_i) \otimes a_i = (\rho \otimes id)\rho(v) = (id \otimes \Delta)\rho(v) = \sum v_i \otimes r_{ijk}a_j \otimes a_k.$$

We get that $\rho(v_k) = \sum v_i \otimes r_{ijk} a_j$. Hence the space spanned by $v$ and the $v_i$'s is a finite dimensional (graded) subcomodule of $V$. 
\end{proof}

\begin{rk}
We now prove the same results in the case of $G$ any gr-group scheme not necessarily finite.  
\end{rk}

\begin{proposition}\label{glnI}
Let $G$ be a gr-group scheme, then there is a closed gr-subgroup embedding $G \hookrightarrow \Gl_I$. 
\end{proposition}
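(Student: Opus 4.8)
The plan is to assemble this from the three preceding results. Since $G$ is a gr-group scheme, $\kf[G]$ is a finitely generated graded commutative Hopf algebra; fix homogeneous algebra generators $a_1, \dots, a_m$ of $\kf[G]$. By the proposition quoted from \cite{W}, each $a_\ell$ lies in a finite dimensional graded subcomodule $V_\ell \subseteq \kf[G]$, where $\kf[G]$ is regarded as a comodule over itself via $\Delta$. Set $V = V_1 + \cdots + V_m$. This is again a finite dimensional graded subcomodule, and it contains every generator $a_\ell$.

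Next I would apply Proposition \ref{coaction} with $M = V$. Choose a homogeneous basis $v_1, \dots, v_n$ of $V$, put $I = (|v_1|, \dots, |v_n|) \in \Z^n$, and write the restricted coaction as $\Delta(v_j) = \sum_i v_i \otimes a_{ij}$ with $|a_{ij}| = I_j - I_i$. Proposition \ref{coaction} then yields a morphism of gr-group schemes $G \to \Gl_I$, equivalently a morphism of gr-Hopf algebras $\phi: \kf[\Gl_I] \to \kf[G]$ with $\phi(x_{ij}) = a_{ij}$; in particular the fact that $G(R)$ is a group is what guarantees $\phi$ factors through $\kf[\Gl_I]$ rather than merely $\kf[\M_I]$.

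It remains to see that $\phi$ is surjective, which is exactly the condition for $G \hookrightarrow \Gl_I$ to be a closed gr-subgroup embedding. Applying $\e \otimes \mathrm{id}$ to $\Delta(v_j) = \sum_i v_i \otimes a_{ij}$ and using the counit axiom gives $v_j = \sum_i \e(v_i)\, a_{ij}$, so each $v_j$ lies in the subalgebra of $\kf[G]$ generated by the $a_{ij}$. Since each $a_\ell$ is a $\kf$-linear combination of the $v_j$, and the $a_\ell$ generate $\kf[G]$ as an algebra, the $a_{ij}$ generate $\kf[G]$ as well; hence $\phi$ is onto. The compatibility of the gradings and the bialgebra identities are checked exactly as in Propositions \ref{coaction} and \ref{finGln}, so I would simply cite those verifications rather than repeat them.

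The genuinely delicate point — and the one I expect to be the main (if mild) obstacle — is the translation ``$\phi$ surjective'' $\Rightarrow$ ``$G \hookrightarrow \Gl_I$ is a closed gr-subgroup embedding'': one needs the graded analogue of the standard Hopf-algebraic dictionary, namely that the kernel of a surjection of gr-Hopf algebras is a graded Hopf ideal and that the induced map of functors is a closed immersion of gr-group schemes. This is the graded counterpart of the classical statement and is already implicit in the way $\Gl_I$ and $\M_I$ were set up, but it is the step where I would be most careful to confirm that ``finitely generated graded commutative'' is preserved and that no parity subtleties intervene.
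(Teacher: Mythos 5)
Your proposal is correct and follows essentially the same route as the paper: take a finite dimensional graded subcomodule $V$ of $\kf[G]$ containing a generating set (the paper asserts its existence directly from the quoted result of \cite{W}, while you make the step explicit by summing the subcomodules $V_\ell$), form $I$ from a homogeneous basis, and deduce surjectivity of $x_{ij} \mapsto a_{ij}$ from $v_j = \sum_i \e(v_i)a_{ij}$, citing \ref{coaction} and \ref{finGln} for the bialgebra verifications. The closing caveat about passing from a surjection of gr-Hopf algebras to a closed gr-subgroup embedding is a fair observation, but the paper treats this as routine as well, so there is no discrepancy.
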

\begin{proof}

Let $V$ be a graded finite dimensional subcomodule of $\kf[G]$ containing a finite set of generators for $\kf[G]$. Let $\{v_1, \ldots, v_n\}$ be a homogeneous basis for $V$ and $I = ( |v_1|, \ldots, |v_n|)$. Note that $\{v_i \}$ is a generating set for $\kf[G]$. Then $\Delta(v_j) = \sum v_i \otimes a_{ij}$ and $v_j = (\e \otimes id)\Delta(v_j) = \sum \e(v_i) a_{ij}$, therefore $\{a_{ij} \}$ also generate $\kf[G]$. Again consider the map $\kf[\M_I] \to \kf[G]$ which sends $x_{ij} \mapsto a_{ij}$ then the map is surjective since the $a_{ij}$ generate $\kf[G]$ and is a gr-bialgebra map by the same computations as in \ref{finGln} and it gives a closed embedding of $G$ in $\Gl_I$. 
\end{proof}

\begin{example}\label{emSt}
Recall the Dual Steenrod subalgebra $A(1)$ from \ref{A1}, $A(1) = \F_2[\xi_1, \xi_2]^{gr}/(\xi_1^4, \xi_2^2)$, then the generators $\xi_1,\xi_2$ are contained in the finite dimensional comodule $ V = \langle 1, \xi_1,\xi_1^2, \xi_2 \rangle$; a matrix corresponding to the comodule action on $V$ is given by 

$$\phi= \begin{bmatrix}
1 & \xi_1 & \xi_1^2 & \xi_2 \\
0 & 1 & 0 & 0 \\
0 & 0 & 1 & \xi_1 \\
0 & 0 & 0 & 1
\end{bmatrix}.$$

 Now $\det(\phi) = 1$, which is invertible; then the map $\phi^*: \kf[\Gl_I] \to \kf[\St_1]$ which sends $x_{ij} \mapsto \phi_{ij}$ yields the embedding $\phi: \St_1 \hookrightarrow \Gl_I$, where $I = (0, |\xi_1|, |\xi_1^2|, |\xi_2|) = (0,1,2,3)$, as a gr-group scheme $\St_1 \hookrightarrow \Gl_I$. 
\end{example}

\section{Graded $1$-parameter subgroups}

We recall the definition of $1$-parameter subgroups as given in \cite{SFB1}. 

\begin{definition}
Let $G$ be a group scheme over $\kf$ and $ r> 0$ be a positive integer. We define the functor $$\Vr(G): (\textrm{comm $\kf$-alg}) \to (\textrm{sets})$$
by 
$$\Vr(G)(A) = \Hom_{grp/A}(\Ga_{(r)} \otimes A, G \otimes A).$$

That is, all the $A$ group scheme homomorphisms $\Ga_{(r)} \otimes A \to G \otimes A$. 
\end{definition}

By \cite{SFB1} for any commutative algebra $A$, we have the natural identification of 
$$\Vr(\Gl_n)(A) = \{ (a_0, \ldots, a_{r-1}) \,\, | \, \, a_i \in \M_n(A), a_i ^p = 0 = [a_j, a_l] \textrm{ for all } 0 \leq i, j, l < r \}. $$

We modify the definition of the scheme $\Vr(G)$ in the graded case. 

\begin{definition}
Let $G$ be a gr-group scheme, and $r >0$ a positive integer, we define the functor

$$\Vrr(G): \Gr \to (\textrm{sets})$$
by setting

$$\Vrr(G)(A) = \Hom_{\textrm{gr-grp}/A}(\Garr\otimes A, G \otimes A).$$

 We call $\Vrr(G)$ the  \textit{graded $1$-parameter subgroups} for $G$.

\end{definition}

Let $G = \Gl_I$, we see how $\Vrr(\Gl_I)$ compares to the ungraded version defined above. We discuss the $p >2$, the $p =2$ follows similarly and it is easier to compute. Recall that in this case $\Garr$ is represented by $\kf[t,s]^{gr}/(t^{p^r})$.

For a graded commutative algebra $A$, an element $f \in \Vrr(\Gl_I)(A)$  is a map of graded $A$-schemes 
$$f: \Garr \otimes A \to \Gl_I \otimes  A$$ and it corresponds to a $\Garr \otimes A$-module ($A[\Garr]$-comodule) structure on $\Sigma^I A : = \bigoplus_{i = 1}^n (\Sigma^{I_i} A) $ where $I = (I_1, \dots, I_n)$ and $\Sigma^{I_i} A$ corresponds to $A$ shifted by $I_i$. Hence it corresponds to a map 

$$\rho_f: \Sigma^I A \to A[\Garr] \otimes_A \Sigma^I A.$$

For $v \in \Sigma^I A$ we can write $$\rho_f(v) = \sum_{k = 0}^{p^r-1}( t^k \otimes \beta_k(v) + t^ks \otimes \sigma_k(v)),$$

where $\beta_k, \sigma_k \in \M_n(A)$.

From the counit diagram for $\rho_f$ we get that $\beta_0$ is the identity map and from the coassociativity diagram we get the following relations:

for $i+j  \leq p^r-1$, 
\begin{equation}\label{beta} \beta_j \beta_i  = { i+j \choose j } \beta_{i+j} = \beta_i \beta_j,\end{equation} and
 \begin{equation} \label{betasigma}\beta_j \sigma_i  =  \beta_i \sigma_j =  \sigma_i \beta_j = \sigma_j \beta_i = { i+j \choose j } \sigma_{i+j},\end{equation} and for $i+j > p^r-1$ all the above values are zero. 
 We also get that for any $i$ and $j$, \begin{equation} \label{sigma}\sigma_i \sigma_j = 0. \end{equation}
 
\begin{proposition}
Let $\alpha_i = \beta_{p^i}$  for $i = 1, \ldots, r-1$ then any  $\beta_j$ and $\sigma_j$ for $j \neq 0$ can be written in terms of the $\alpha_i$'s and $\sigma_0$.
\end{proposition}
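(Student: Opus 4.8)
The plan is to imitate the argument from \cite{SFB1} that reconstructs all the divided-power generators from the $p$-power ones, adapted to keep track of the extra odd generator $\sigma_0$. First I would write $j$ in base $p$ as $j = \sum_{i=0}^{r-1} c_i p^i$ with $0 \leq c_i < p$, and show by induction on the number of nonzero base-$p$ digits that $\beta_j$ is (up to a nonzero scalar in $\F_p$) a product of powers of the $\alpha_i = \beta_{p^i}$. The base case $j = p^i$ is the definition, and the inductive step uses relation \eqref{beta}: if $j = j' + p^i$ where $j'$ has strictly fewer digits and the addition involves no carrying, then $\binom{j}{p^i}$ is a unit mod $p$ by Lucas' theorem, so $\beta_j = \binom{j}{p^i}^{-1}\beta_{j'}\beta_{p^i}$; iterating expresses $\beta_j$ as a scalar times $\alpha_0^{c_0}\alpha_1^{c_1}\cdots\alpha_{r-1}^{c_{r-1}}$ (noting $\beta_1 = \alpha_0$, and $\beta_0 = \mathrm{id}$ handles the case $c_i = 0$).

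Next I would handle $\sigma_j$ for $j \neq 0$. Relation \eqref{betasigma} gives $\beta_j \sigma_0 = \binom{j}{j}\sigma_j = \sigma_j$ whenever $j \leq p^r - 1$, so $\sigma_j = \beta_j \sigma_0$, and substituting the expression for $\beta_j$ just obtained writes $\sigma_j$ as a scalar times $\alpha_0^{c_0}\cdots\alpha_{r-1}^{c_{r-1}}\sigma_0$. This is the entire content for the $\sigma$'s; no new induction is needed. I should double-check the edge cases: that the formula $\sigma_j = \beta_j\sigma_0$ is exactly the $i = 0$, shifted instance of \eqref{betasigma}, and that for $j > p^r - 1$ the statement is vacuous since those $\beta_j, \sigma_j$ vanish.

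The main obstacle I anticipate is purely bookkeeping: making sure Lucas' theorem is applied correctly so that every binomial coefficient appearing in the telescoping is a unit in $\F_p$ (i.e., that one can always peel off one base-$p$ digit at a time without carrying), and keeping the commutativity in \eqref{beta} available so the order of the factors $\alpha_i$ does not matter. There is no genuine difficulty with \eqref{sigma}, since that relation is not needed here — it only says $\sigma_i\sigma_j = 0$, which will matter later when identifying the coordinate ring, not for this reduction. I would state the result cleanly as: for $1 \leq j \leq p^r - 1$ with base-$p$ expansion $j = \sum c_i p^i$,
\begin{equation*}
\beta_j = \left(\prod_i c_i!\right)^{-1}\alpha_0^{c_0}\cdots\alpha_{r-1}^{c_{r-1}}, \qquad \sigma_j = \left(\prod_i c_i!\right)^{-1}\alpha_0^{c_0}\cdots\alpha_{r-1}^{c_{r-1}}\sigma_0,
\end{equation*}
with the convention $\alpha_0 = \beta_1$, which makes the proposition immediate.
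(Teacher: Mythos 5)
Your argument is correct and follows essentially the same route as the paper: an induction that peels a single power of $p$ off $j$, using relation \eqref{beta} together with the fact (via Lucas/Kummer) that the relevant binomial coefficient is a unit mod $p$, and then $\sigma_j = \beta_j\sigma_0$ from \eqref{betasigma}. Your closed-form expression $\beta_j = \alpha_0^{c_0}\cdots\alpha_{r-1}^{c_{r-1}}/\prod_i (c_i!)$ is exactly what the paper records immediately afterward in Remark \ref{recov}, and your convention $\alpha_0 = \beta_1$ correctly supplies the index $i=0$ that the proposition's statement omits.
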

 
 \begin{proof} First note that by formula (\ref{betasigma}) any $\sigma_i  = \beta_i \sigma_0$. Now for some $\beta_j$ not of the form $\alpha_i$, we can write $j = p^l q$ where $p^l$ is maximal with respect to dividing $j$ and $q \neq 1$. Let us assume by induction that $\beta_{p^l(q-1)}$ can be written in terms of the $\alpha_i$'s. Then $j = p^l +p^l(q-1)$ and then ${p^l +p^l(q-1) \choose p^l} \beta_j = \beta_{p^l(q-1)} \beta_{p^m}$. It can be shown that $p  \nmid {p^l +p^l(q-1) \choose p^l}$ therefore $\beta_j$ can be described in terms of the $\alpha_i$'s. 
\end{proof}

\begin{rk}\label{recov}
More precisely, as in \cite[1.2]{SFB1}  we get that for any $j = \sum_{l = 0}^{r-1}j_lp^l$ with $0 \leq j_l < p$, $$\beta_j = \frac{\alpha_0^{j_0}\cdots\alpha_{r-1}^{j_{r-1}}}{(j_0!)\cdots(j_{r-1}!)},$$
and 
$$\sigma_i = \frac{\alpha_0^{j_0}\cdots\alpha_{r-1}^{j_{r-1}}\sigma_0}{(j_0!)\cdots(j_{r-1}!)}.$$
\end{rk}

The proposition above tells us that in order to describe a map $\rho_f$ it is enough to have maps $\alpha_i$  for $i = 0, \ldots r-1$ and $\sigma_0$.  From now on we will relabel $\sigma = \sigma_0$ for simplicity.

\begin{proposition}\label{grtuple}
Let $A$ be a graded commutative algebra and let $I = (I_1, \ldots, I_n)$, then 

$$\Vrr(\Gl_I)(A) =\left\{
	\begin{array}{ll}
		 \{ (\alpha_0, \ldots, \alpha_{r-1}) \in \M_n(A)^{r} \} & \textrm{ if } p = 2, \\
		 \{ (\alpha_0, \ldots, \alpha_{r-1}, \sigma) \in \M_n(A)^{r+1} \} &  \mbox{ if } p > 2.
	\end{array}\right.$$

Where
\begin{itemize}
\item $\alpha_k^p = \sigma^2 =  [ \alpha_k, \alpha_l] = [\alpha_k, \sigma] = 0$ for all  $0 \leq k, l < r$, 
\item $|(\alpha_k)_{ij}| = I_j-I_i-p^k|t|$, and 
\item $|\sigma_{ij}| = I_j-I_i-|s|$.
\end{itemize}
\end{proposition}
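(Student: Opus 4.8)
The plan is to upgrade the correspondence set up just above the statement into the claimed parametrisation, treating $p>2$ in full (the case $p=2$ being the same after deleting $s$, $\sigma$ and all $\sigma_k$). Recall that $f\in\Vrr(\Gl_I)(A)$ is the same datum as a graded $A[\Garr]$-comodule structure $\rho_f\colon\Sigma^I A\to A[\Garr]\otimes_A\Sigma^I A$, expanded as $\rho_f(v)=\sum_{k=0}^{p^r-1}\bigl(t^k\otimes\beta_k(v)+t^k s\otimes\sigma_k(v)\bigr)$ with $\beta_k,\sigma_k\in\M_n(A)$. Such a datum is a comodule structure precisely when $\beta_0=\mathrm{id}$ and the relations~(\ref{beta})--(\ref{sigma}) hold: these were read off the counit and coassociativity diagrams above, and conversely comparing coefficients of $t^a\otimes t^b$, $t^as\otimes t^b$, $t^a\otimes t^bs$ and $t^as\otimes t^bs$ in $(\Delta\otimes\mathrm{id})\rho_f=(\mathrm{id}\otimes\rho_f)\rho_f$ returns exactly~(\ref{beta})--(\ref{sigma}). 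By the preceding Proposition and Remark~\ref{recov}, $\rho_f$ is determined by $\alpha_i:=\beta_{p^i}$ ($0\le i\le r-1$) and $\sigma:=\sigma_0$, so $f\mapsto(\alpha_0,\dots,\alpha_{r-1},\sigma)$ is injective and the task is to pin down its image.

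For the forward inclusion, the relations among the $\alpha$'s and $\sigma$ all drop out of~(\ref{beta})--(\ref{sigma}): commutativity of every $\beta_k$ (hence $[\alpha_k,\alpha_l]=0$) is~(\ref{beta}); $[\alpha_k,\sigma]=0$ is~(\ref{betasigma}) with $i=0$; $\sigma^2=\sigma_0^2=0$ is~(\ref{sigma}); and iterating~(\ref{beta}) expresses $\alpha_k^p=\beta_{p^k}^{\,p}$ as a unit multiple of $\binom{p^{k+1}}{p^k}\,\beta_{p^{k+1}}$, which is $0$ because $p\mid\binom{p^{k+1}}{p^k}$ by Lucas' theorem (with the convention $\beta_m=0$ for $m>p^r-1$ this also settles $k=r-1$), so $\alpha_k^p=0$. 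The degree constraints come from $\rho_f$ being homogeneous of degree $0$: matching the degree of the summand $t^k\otimes(\beta_k)_{ij}v_i$ of $\rho_f(v_j)$ with $|v_j|=I_j$ forces $|(\beta_k)_{ij}|=I_j-I_i-k|t|$, and likewise $|(\sigma_k)_{ij}|=I_j-I_i-k|t|-|s|$; specialising to $k=p^i$ and $k=0$ gives the stated formulas for $|(\alpha_i)_{ij}|$ and $|\sigma_{ij}|$.

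For the reverse inclusion, start from any $(\alpha_0,\dots,\alpha_{r-1},\sigma)$ satisfying those relations and degree conditions, and define, for $j=\sum_{l=0}^{r-1}j_lp^l$ with $0\le j_l<p$,
\[
\beta_j:=\frac{\alpha_0^{j_0}\cdots\alpha_{r-1}^{j_{r-1}}}{(j_0!)\cdots(j_{r-1}!)},\qquad
\sigma_j:=\beta_j\,\sigma,
\]
which is well defined since the $\alpha_l$ commute and the $j_l!$ are units in $\kf$, and set $\rho_f(v):=\sum_{k=0}^{p^r-1}\bigl(t^k\otimes\beta_k(v)+t^ks\otimes\sigma_k(v)\bigr)$. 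Then $\beta_0=\mathrm{id}$ gives the counit axiom; the degree hypotheses make every $(\beta_k)_{ij}$ of degree $I_j-I_i-k|t|$ and every $(\sigma_k)_{ij}$ of degree $I_j-I_i-k|t|-|s|$, so $\rho_f$ is a homogeneous map of degree $0$; and it remains to verify~(\ref{beta})--(\ref{sigma}). Since $\sigma$ is central with $\sigma^2=0$, (\ref{sigma}) is immediate and~(\ref{betasigma}) reduces to~(\ref{beta}) via $\sigma_i=\beta_i\sigma$; for~(\ref{beta}) one has $\beta_i\beta_j=\prod_l\alpha_l^{\,i_l+j_l}/(i_l!\,j_l!)$. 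If adding $i,j$ in base $p$ has a carry, then some $i_l+j_l\ge p$, so $\alpha_l^{\,i_l+j_l}=0$ and the left side is $0$; by Kummer's theorem $p\mid\binom{i+j}{j}$, and since $i+j>p^r-1$ can only occur when there is a carry, the right side $\binom{i+j}{j}\beta_{i+j}$ is $0$ as well. If there is no carry, then $i+j\le p^r-1$, $(i+j)_l=i_l+j_l$ for all $l$, and Lucas' product formula $\binom{i+j}{j}=\prod_l\binom{i_l+j_l}{j_l}$ turns $\prod_l\alpha_l^{\,i_l+j_l}/(i_l!\,j_l!)$ into $\binom{i+j}{j}\beta_{i+j}$. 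Thus $\rho_f$ is a genuine graded comodule structure, defining an $f\in\Vrr(\Gl_I)(A)$ that maps to the chosen tuple; the two assignments are mutually inverse and natural in $A$, which is the claim.

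The only step that is not bookkeeping is the verification of~(\ref{beta}) for the reconstructed $\beta_j$, precisely at the boundary $i+j>p^r-1$ and in the cases where $\binom{i+j}{j}\equiv0$: here Kummer's count of carries in base-$p$ addition is exactly what is needed, since a carry simultaneously kills the left side (through $\alpha_l^{\,i_l+j_l}=0$) and the right side (through $p\mid\binom{i+j}{j}$), while the carry-free case is handled by Lucas' product formula.
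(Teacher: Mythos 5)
Your proposal is correct and follows the same route as the paper: the paper's (one-line) proof simply reads the relations off formulas~(\ref{beta})--(\ref{sigma}) together with Proposition~\ref{taylor} and Remark~\ref{recov}, exactly as you do. You additionally spell out the converse direction (reconstructing a genuine comodule structure from a tuple via Lucas/Kummer), which the paper leaves implicit but which is consistent with its intended argument.
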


\begin{proof}

These relations come formulas (\ref{beta}) and (\ref{betasigma}). 
\end{proof}

\begin{proposition}\label{taylor}
Let $A$ be a graded commutative algebra and let $H  \in \Gl_I(A[\Garr])$ such that $H$ yields a graded comodule structure for $\Sigma^I A$. Then

$$H= \sum_{k = 0}^{p^r-1}\beta_k t^k+\sigma_k t^ks$$
\end{proposition}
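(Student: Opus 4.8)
The plan is to get the expansion by decomposing $A[\Garr]$ as a free $A$-module, reading off matrix coefficients entrywise, and then identifying those coefficients with the maps $\beta_k,\sigma_k$ that already occur in the formula for $\rho_f$ above.

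First I would record the $A$-module structure of $A[\Garr]$. Since $\kf[\Garr]=\kf[t,s]^{gr}/(t^{p^r})$ with $|t|$ even and $|s|$ odd, and since $\ch(\kf)=p>2$ forces $s^2=0$, the homogeneous set $\{\,t^k,\ t^k s : 0\le k\le p^r-1\,\}$ is a $\kf$-basis of $\kf[\Garr]$; after base change it is a free $A$-module basis of $A[\Garr]=\kf[\Garr]\otimes_\kf A$. (For $p=2$ one has instead $A[\Garr]=A[t]/(t^{2^r})$ with basis $\{\,t^k:0\le k<2^r\,\}$, and the whole argument goes through with the $\sigma_k$ terms suppressed.) Consequently every entry $h_{ij}$ of $H$ has a unique expression $h_{ij}=\sum_{k=0}^{p^r-1}\bigl((\beta_k)_{ij}\,t^k+(\sigma_k)_{ij}\,t^k s\bigr)$ with $(\beta_k)_{ij},(\sigma_k)_{ij}\in A$; collecting the entries produces matrices $\beta_k,\sigma_k\in\M_n(A)$ and gives $H=\sum_{k=0}^{p^r-1}\beta_k t^k+\sigma_k t^k s$ as claimed.

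Second, I would check that these $\beta_k,\sigma_k$ really are the ones named before the statement, which is the only place the hypothesis ``$H$ yields a graded comodule structure on $\Sigma^I A$'' is used. By construction $H=(f^\ast(x_{ij}))$ for the algebra map $f^\ast\colon A[\Gl_I]\to A[\Garr]$ corresponding to the element $f\in\Vrr(\Gl_I)(A)$ determined by $H$, and under the dictionary between $\Gl_I$-comodules and their coordinate matrices the comodule map satisfies $\rho_f(v_j)=\sum_i h_{ij}\otimes v_i$, where $v_1,\dots,v_n$ is the homogeneous basis of $\Sigma^I A$ of degrees $I_1,\dots,I_n$ and $h_{ij}$ is the $(i,j)$-entry of $H$. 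Substituting the expansion of $h_{ij}$ and sliding the scalars $(\beta_k)_{ij},(\sigma_k)_{ij}\in A$ across $\otimes_A$ yields $\rho_f(v_j)=\sum_k\bigl(t^k\otimes\beta_k(v_j)+t^k s\otimes\sigma_k(v_j)\bigr)$, which is exactly the formula for $\rho_f$ recorded above; in particular $\beta_0=id$, and the degree bookkeeping $|(\beta_k)_{ij}|=I_j-I_i-k|t|$, $|(\sigma_k)_{ij}|=I_j-I_i-k|t|-|s|$ is forced by $|h_{ij}|=I_j-I_i$.

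I do not expect a genuine obstacle: the ``Taylor expansion with matrix coefficients'' is nothing more than the entrywise expansion of $H$ in the free $A$-basis of $A[\Garr]$, and the comodule hypothesis only serves to pin those coefficients to the previously defined $\beta_k,\sigma_k$. The points requiring a little care are purely clerical — keeping straight the left/right placement in $A[\Garr]\otimes_A\Sigma^I A$ when moving $A$-scalars, and separating the cases $p=2$ and $p>2$ in the basis description. The further relations among the $\beta_k,\sigma_k$ (such as $\beta_i\beta_j=\binom{i+j}{j}\beta_{i+j}$) are consequences of coassociativity and are not needed for this proposition.
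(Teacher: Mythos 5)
Your proposal is correct and is essentially the paper's own argument run in the opposite direction: the paper evaluates the operator form $\rho_f(v)=\sum_k t^k\otimes\beta_k(v)+t^ks\otimes\sigma_k(v)$ on the basis $e_j$ and compares with $\rho_f(e_j)=\sum_i g_{ij}\otimes e_i$ to extract the entries of $H$, while you first expand each entry in the free $A$-basis $\{t^k,t^ks\}$ of $A[\Garr]$ and then identify the coefficient matrices with the previously defined $\beta_k,\sigma_k$ by the same comparison. The mathematical content is identical.
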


\begin{proof}
A homogeneous basis for $\Sigma^I A$ over $A$ must be given by $e_1 = 1_{\Sigma^{I_1} A}, e_2 = 1_{\Sigma^{I_2} A}, \ldots e_n = 1_{\Sigma^{I_n} A}$ where then $|e_i| = I_i$ then $\rho_f(e_j) = \sum g_{ij} \otimes e_i$, where $(g_{ij}) \in \Gl_I(A[\Garr])$.

Given $v \in \Sigma^I A$ we can write $v = a_1 e_1+ \cdots + a_n e_n$ and 

$$\rho_f(v) = \sum_{k  = 0}^{p^r-1} t^k \otimes \beta_k(v) + t^ks \otimes \sigma_k(v).$$  

Note that if instead we write the coaction in terms of the basis elements we get that 

$$\rho_f(e_j) = \sum_{i = 1}^{n} g_{ij} \otimes e_i.$$

Comparing these expressions we obtain 

\begin{eqnarray*}
\sum_{i = 1}^n g_{ij} \otimes e_i & = &\sum_{k = 0}^{p^r-1} t^k \otimes \beta_k(e_j) + t^ks \otimes \sigma_k(e_j) \\
& = &\sum_{k = 0}^{p^r-1} t^k \otimes \sum_{i  = 1}^n (\beta_k)_{ij} e_i + t^ks \otimes \sum_{i = 1}^n (\sigma_k)_{ij} e_i \\
& =& \sum_{i = 1}^n \sum_{k = 0}^{p^r-1} [(\beta_k)_{ij} t^k + (\sigma_k)_{ij} t^ks ]\otimes e_i.\\
\end{eqnarray*}

We have this Taylor polynomial type relation:

$$g_{ij} = \sum_{k= 0}^{p^r-1}(\beta_k)_{ij} t^k + (\sigma_k)_{ij} t^ks.$$
\end{proof}

\begin{rk}
For $p >2$, given such a map $f: \Garr \otimes A \to \Gl_I \otimes A$ as above, we get an $r+1$-tuple $(\underline{\alpha}, \sigma) = (\alpha_0, \ldots, \alpha_{r-1}, \sigma) $.

Conversely, given $(\underline{\alpha}, \sigma)$, we can construct the map  $f: \Garr \otimes A \to \Gl_I \otimes A$ as $\exp(\underline{\alpha}, \sigma)$, defined the following way. Let $R$ be a graded $A$-algebra, we define $\exp(\alpha)$ for any $p$-nilpotent $\alpha \in \M_n(R)$ as 
$$\exp(\alpha) = I + \alpha + \frac{\alpha^2}{2}+ \cdots + \frac{\alpha^{p-1}}{(p-1)!} \in \Gl_I(R).$$

There is a correspondence between $u \in R_{|t|}$ ($p^r$-nilpotent),  $v \in R_{|s|}$, and $g \in (\Garr \otimes A)(R)$ given by $g(t) = u$ and $g(s) = v$.  We define $\exp(\underline{\alpha}, \sigma)$ as follows: for any  $u \in R_{|t|}$ and $v \in R_{|s|}$,

$$\exp(\underline{\alpha}, \sigma)(u,v) = \exp(u\alpha_0)\exp(u^p\alpha_1) \cdots \exp(u^{p^{r-1}}\alpha_{r-1}) \exp(v \sigma)\in \Gl_I(R),$$

and from the relations in \ref{recov} we can check that $f = \exp(\underline{\alpha}, \sigma)$. 

Beware that even though $\alpha_i$ and $\sigma$ commute, $(\alpha_i t)(\sigma s)$ may not be equal to $(\sigma s)(\alpha_i t)$ since $s$ is of odd degree, so we need to be careful when computing $\exp(\underline{\alpha}, \sigma)(u,v)$.

%We see that $\deg((s^{p^k}\alpha_k)_{ij})  = p^km+l_j-l_i -p^km = l_j-l_i$ hence, $\exp_{\underline{\alpha}}(s) \in \Gl_I(S)$.
 \end{rk}

\begin{proposition}\label{coordVr}
Given a gr-group scheme $G$, $\Vrr(G)$ is a graded affine scheme, that is, $\Vrr(G)$ is a representable functor from graded commutative algebras to sets. Moreover $\Vrr$ is a functor from gr-group schemes to graded affine schemes. 
\end{proposition}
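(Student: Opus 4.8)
The plan is to proceed in two stages: first establish representability of $\Vrr(G)$ for $G = \Gl_I$ using the explicit description from Proposition \ref{grtuple}, then transfer this to an arbitrary gr-group scheme $G$ via the closed embedding $G \hookrightarrow \Gl_I$ supplied by Proposition \ref{glnI}. For $\Gl_I$ itself, Proposition \ref{grtuple} identifies $\Vrr(\Gl_I)(A)$ (for $p>2$) with the set of tuples $(\alpha_0,\dots,\alpha_{r-1},\sigma)\in\M_n(A)^{r+1}$ whose entries lie in prescribed graded degrees and which satisfy the closed conditions $\alpha_k^p = \sigma^2 = [\alpha_k,\alpha_l] = [\alpha_k,\sigma] = 0$. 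So I would take the graded polynomial ring on the matrix entries of the $\alpha_k$ and $\sigma$, with the degree of the $(i,j)$-entry of $\alpha_k$ set to $I_j - I_i - p^k|t|$ and that of $\sigma$ set to $I_j - I_i - |s|$, and then quotient by the graded ideal generated by the entries of the matrices $\alpha_k^p$, $\sigma^2$, $[\alpha_k,\alpha_l]$, $[\alpha_k,\sigma]$. This ideal is homogeneous because all the relation-matrices are homogeneous of well-defined degree, so the quotient $\kf[\Vrr(\Gl_I)]$ is a genuine graded commutative $\kf$-algebra, and by construction $\Homg(\kf[\Vrr(\Gl_I)], A)$ is naturally the set described in Proposition \ref{grtuple}. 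The case $p=2$ is the same but simpler (no $\sigma$). This handles the base case.

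For general $G$, fix a closed embedding $\iota: G \hookrightarrow \Gl_I$ from Proposition \ref{glnI}, corresponding to a surjection of gr-Hopf algebras $\kf[\Gl_I]\twoheadrightarrow\kf[G]$ with kernel a homogeneous Hopf ideal $J$. Functoriality of $\Vrr$ (which I would note first — precomposition with $\iota\otimes A$ gives a natural transformation $\Vrr(G)\to\Vrr(\Gl_I)$, injective because $\iota\otimes A$ is a closed embedding hence a monomorphism of $A$-group schemes) realizes $\Vrr(G)(A)$ as the subset of $\Vrr(\Gl_I)(A)$ consisting of those tuples $(\underline\alpha,\sigma)$ for which the associated homomorphism $\exp(\underline\alpha,\sigma):\Garr\otimes A\to\Gl_I\otimes A$ factors through $G\otimes A$ — equivalently, for which the composite $\kf[\Gl_I]\otimes A \to A[\Garr\otimes A]$ kills $J\otimes A$. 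Writing the image of a set of homogeneous generators $g_1,\dots,g_m$ of $J$ under this composite as graded polynomials in the matrix entries of $\underline\alpha,\sigma$ with coefficients that are Laurent-type expressions in $t,s$ (using the Taylor expansion of Proposition \ref{taylor} and the explicit formula $\exp(\underline\alpha,\sigma)(u,v)$), the vanishing condition becomes the vanishing of all the coefficients of $t^k$ and $t^k s$ in these expressions — finitely many new homogeneous polynomial equations in the entries. Adjoining these to $\kf[\Vrr(\Gl_I)]$ defines a homogeneous ideal, and the quotient represents $\Vrr(G)$; this also exhibits $\kf[\Vrr(G)]$ as a graded quotient of $\kf[\Vrr(\Gl_I)]$, which is the concrete payoff mentioned in the introduction. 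Finally, for the "moreover" clause, given a map $\phi: G\to G'$ of gr-group schemes, precomposition-free functoriality is automatic ($f\mapsto (\phi\otimes A)\circ f$), and one checks it is induced by a graded algebra map $\kf[\Vrr(G')]\to\kf[\Vrr(G)]$ by chasing the generators through the embeddings into a common $\Gl_I$ (enlarging to accommodate both $G$ and $G'$), so $\Vrr$ is a functor to graded affine schemes.

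The main obstacle I anticipate is the bookkeeping in the general-$G$ step: verifying cleanly that "the morphism $\exp(\underline\alpha,\sigma)$ factors through $G\otimes A$" is equivalent to the vanishing of a finite set of \emph{homogeneous} polynomial equations in the entries of $\underline\alpha,\sigma$, and that this holds functorially in $A$. The subtlety is that one must expand $g_\ell(\exp(\underline\alpha,\sigma))$ — where $g_\ell$ is a homogeneous generator of $J$ — as an element of $A[\Garr]\otimes A$, collect it in the $\kf[t,s]^{gr}/(t^{p^r})$-basis $\{t^k, t^k s\}$, and observe that each coefficient is homogeneous in the matrix-entry variables (since $g_\ell$ is homogeneous and the grading on $\kf[\Vrr(\Gl_I)]$ was rigged precisely so that $\exp(\underline\alpha,\sigma)$ is a \emph{graded} map); the odd degree of $s$ and the noncommutativity warning in the remark preceding the proposition mean one has to be careful with signs, but no genuine difficulty arises. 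Everything else is routine: representability by explicit generators and relations, and functoriality by naturality of precomposition.
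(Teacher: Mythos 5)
Your proposal is correct and follows essentially the same route as the paper: representability of $\Vrr(\Gl_I)$ via the explicit generators-and-relations coordinate ring from Proposition \ref{grtuple}, reduction of the general case to a quotient of $\kf[\Vrr(\Gl_I)]$ by the homogeneous equations obtained from the defining equations of a closed embedding $G \hookrightarrow \Gl_I$ evaluated on $\exp(\underline{X},Y)(t,s)$, and functoriality by postcomposition. Your added remarks on homogeneity of the resulting ideal and on injectivity of $\Vrr(G)\to\Vrr(\Gl_I)$ are reasonable elaborations of steps the paper leaves implicit.
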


\begin{proof}
By. \ref{grtuple} $\Vrr(\Gl_I)$ has as a coordinate ring;
$$\frac{\kf[X_{ij}^l]^{gr}_{{0 \leq l \leq r-1,  1 \leq i,j \leq n}}}{((X^l)^p,[X^l, X^k])},$$ for $p =2$ and, 
$$\frac{\kf[X_{ij}^l, Y_{ij}]^{gr}_{{0 \leq l \leq r-1,  1 \leq i,j \leq n}}}{((X^l)^p, (Y)^2,[X^l, X^k], [X^l, Y])},$$
for $p >2$.
%\frac{\kf[X_{ij}^l, Y_{ij}]^{gr}_{{0 \leq l \leq r-1,  1 \leq i,j \leq n}}}{((X^l)^p, (Y)^2,[X^l, X^k], [X^l, Y])}$$

Where $X^l$ and $Y$ are the $n \times n$ matrices with $ij$th entry being the variable $X^l_{ij}$ and $Y_{ij}$ respectively. 
These are graded by $|X_{ij}^l| = (I_j-I_i-p^l|t|)$ and $|Y_{ij}| = (I_j-I_i-|s|)$. 

Let $G$ be a gr-group scheme; by \ref{glnI} there exists an embedding  $\phi: G \hookrightarrow \Gl_I$ for some $I$. 
Given a graded algebra $A$, elements in $\Vrr(\Gl_I)(A)$ corresponds to an $r+1$-tuple $(\underline{\alpha}, \sigma) \in \M_n(A)^{r+1}$ (satisfying the conditions in \ref{grtuple}) via $\exp(\underline{\alpha}, \sigma)(t,s)$. 
Then given $g \in \Vrr(G)(A)$ there exists an $(\underline{\alpha}, \sigma) \in \Vrr(\Gl_I)$ such that the following diagram commutes.

$$\xymatrix{\Garr \otimes A \ar[rr]^{\exp(\underline{\alpha}, \sigma)(t,s)} \ar[drr]_g && \Gl_I \otimes A\\
 && G \otimes A \ar@{^{(}->}[u]_{\phi \otimes A}}$$
 
The above diagram says that any element in $\Vrr(G)$ can be describe by the defining equations of the embedding on an exponential map to $\Gl_I \otimes A$. More precisely, the embedding $\phi: G \hookrightarrow \Gl_I$ corresponds to a surjective map $\phi^*: \kf[\Gl_I] \to \kf[G]$ and has defining equations $F_1, \ldots, F_m$ describing the kernel of $\phi^*$.  Then the coordinate algebra of $\kf[\Vrr(G)]$ is precisely the quotient of $\kf[\Vrr(\Gl_I)]$ defined by $F_i(\exp(X_{ij}^l, Y)(t,s))= 0$. 
 
Note that since the coordinate ring for $\Vrr(G)$ uniquely describes $\Vrr(G)$ as a graded affine scheme then as a graded algebra $\kf[\Vrr(G)]$ is independent of the embedding. 

We now show that $\Vrr(\_)$ is a functor. Let $\varphi: G \to H$ be gr-group scheme homomorphism, $A$ a gr-commutative algebra, and $\Vrr(\varphi): \Vrr(G) \to \Vrr(H)$ is given by the composition $\xymatrix{\Garr\otimes A \ar[r]^g  & G \otimes A \ar[r]^{\varphi \otimes A} & H \otimes A}$ hence $\Vrr(\varphi)(g) = (\varphi \otimes A)\circ g \in \Vrr(H)$. It is then clear that $\Vrr$ is a covariant functor from gr-group schemes to affine graded schemes.  
\end{proof}

\begin{rk}[Bigraded rings; notation for bidegree and total degree]
There is no standard notation for the bidegree and the total degree of a bigraded ring. We describe the notation we will use. Let $R$ be a bigraded ring. We will use $\|x\| = (i,j)$ to denote the bidegree of a bihomogeneous element $x \in R$. We denote $\|x\|_1 = i$ and $\|x\|_2 = j$, if we wish to refer to the first or second degree. The bigraded ring $R$ can be made into a graded ring via the first or second degree and also by the total degree, we denote the total degree of $x$ by $\tot(x) = i+j$.
\end{rk}

We can make $\kf[\Vrr(G)]$ into a bigraded ring with some external degrees. By convention these degrees do not introduce any Kozsul sign convention. 
For instance, when we write $$\kf[\Vrr(\Gl_I)] = \frac{\kf[X_{ij}^l, Y_{ij}]^{gr}_{{0 \leq l \leq r-1,  1 \leq i,j \leq n}}}{((X^l)^p, (Y)^2,[X^l, X^k], [X^l, Y])}$$ the $gr$ refers to $\kf[\Vrr(\Gl_I)]$ as a graded polynomial ring with respect to the internal degree that was computed in  \ref{coordVr}.

We bigrade $\kf[\Vrr(G)]$ the following way 
\begin{eqnarray*}
\|X_{ij}^l\| &= &\left\{
	\begin{array}{ll}
		 (p^l, I_j-I_i-|t|p^l) & \textrm{ for } p = 2, \\
		(2p^l, I_j-I_i-|t|p^l) &  \mbox{ for } p > 2, \mbox{ and}
	\end{array}\right.\\
	\|Y_{ij}\|& =& (p^r, I_j-I_i-|s|), \mbox{ for } p>2.
	\end{eqnarray*}

%$$\|Y_{ij}\| =\left\{
%	\begin{array}{ll}
%		 (1, I_j-I_i-|s|) & \textrm{ for } p = 2, \\
%		(p^r, I_j-I_i-|s|) &  \mbox{ for } p > 2.
%	\end{array}\right.$$

%\textcolor{red}{Reference \cite{CPSvdK} take a look at proof use that to proof graded case}

\begin{proposition}(From \cite[2.2]{Wi})\label{HGar}
For $\Garr$ the graded cohomology is 

$$\HH(\Garr, \kf) = \kf[\lambda_1, \ldots, \lambda_{r}]^{gr},$$
 where $\|\lambda_i\| = (1, |t|p^{i-1})$ for $p = 2$, and 
 $$\HH(\Garr, \kf) = \kf[x_1, \ldots, x_r, y, \lambda_1, \ldots, \lambda_r]^{gr},$$  
 where $\|x_i\| = (2, |t|p^i), \|y\| = (1, |s|)$, and $\|\lambda_i\| = (1, |t|p^{i-1})$ for $p > 2$. 
 
The cohomology is bigraded with the first degree corresponding to the cohomological degree and the second degree correspond to the internal degree of $\Garr$. As an algebra $\HH(\Garr, \kf)$ is  gr-commutative with respect to the total degree (c.f. \ref{grcomcoh}), that is, the sum of the cohomological and internal degree. 
 \end{proposition}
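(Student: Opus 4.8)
The plan is to reduce to the computation of \cite[2.2]{Wi}: the bigraded \emph{algebra} appearing here is exactly that one applied to the elementary gr-Hopf algebra $\kf[\Garr]$ (for $p>2$ this is $\kf[t]^{gr}/(t^{p^r})\otimes\Lambda(s)$, a tensor product of monogenic Hopf algebras; for $p=2$ it is the monogenic algebra $\kf[t]^{gr}/(t^{2^r})$), so the only piece of genuinely new work is pinning down the internal degree of each generator. For that I would work with the graded group algebra $\kf\Garr=\kf[\Garr]^{\#}$, use the identification $\HH(\Garr,\kf)=\Ext^{*,*}_{\kf\Garr}(\kf,\kf)$, split $\kf\Garr$ into a tensor product of monogenic graded algebras, and track both the cohomological and the internal degree through the standard periodic resolutions.

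First I would split off the odd variable. For $p>2$ we have $\kf[\Garr]=\kf[t]^{gr}/(t^{p^r})\otimes_{\kf}\Lambda(s)$ as gr-Hopf algebras, hence $\Garr\cong G_t\times G_s$ as gr-group schemes, where (ad hoc notation) $G_t$ is represented by $\kf[t]^{gr}/(t^{p^r})$ and $G_s$ by $\Lambda(s)$. Dualizing, $\kf\Garr=\kf G_t\otimes\kf G_s$, and since $\kf$ is a field the K\"unneth theorem gives $\HH(\Garr,\kf)\cong\HH(G_t,\kf)\otimes\HH(G_s,\kf)$ as bigraded algebras (the Yoneda product being multiplicative under the tensor product); for $p=2$ there is no $s$ and $\Garr=G_t$. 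For the odd factor, $\kf G_s=\Lambda(s)^{\#}\cong\Lambda(v)$ with $v$ odd of internal degree $|s|$, and the $1$-periodic minimal free resolution $\cdots\xrightarrow{v}\Lambda(v)\xrightarrow{v}\Lambda(v)\to\kf$ yields $\HH(G_s,\kf)=\Ext^{*,*}_{\Lambda(v)}(\kf,\kf)=\kf[y]$ with $\|y\|=(1,|s|)$; since $v$ is odd, $y$ has even total degree, so it is an honest polynomial generator.

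Next I would handle the truncated polynomial factor. The group algebra $\kf G_t=(\kf[t]^{gr}/(t^{p^r}))^{\#}$ is the truncated divided power gr-Hopf algebra, which as a graded algebra is $\kf[u_0,\dots,u_{r-1}]^{gr}/(u_i^p)$, where $u_i=\gamma_{p^i}$ is dual to $t^{p^i}$ and hence has even internal degree $p^i|t|$; here one uses that the $\gamma_{p^i}$ with $0\le i<r$ generate and the Kummer-type congruence $p\mid\binom{p^{i+1}}{p^i}$, which forces $u_i^p=0$. By K\"unneth again (and multiplicativity of the Yoneda product), $\Ext^{*,*}_{\kf G_t}(\kf,\kf)\cong\bigotimes_{i=0}^{r-1}\Ext^{*,*}_{\kf[u_i]^{gr}/(u_i^p)}(\kf,\kf)$, and each factor is read off the $2$-periodic minimal free resolution $\cdots\xrightarrow{u_i^{p-1}}B\xrightarrow{u_i}B\xrightarrow{u_i^{p-1}}B\xrightarrow{u_i}B\to\kf$: for $p>2$ this gives $\Lambda(\lambda)\otimes\kf[x]$ with $\|\lambda\|=(1,p^i|t|)$ and $\|x\|=(2,p^{i+1}|t|)$ (the internal degree of the generator in homological degree $n$ being the sum of the successive shifts $|t|p^i,(p-1)|t|p^i,|t|p^i,\dots$), while for $p=2$ it collapses to $\kf[\lambda]$ with $\|\lambda\|=(1,2^i|t|)$ and $x=\lambda^2$.

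Finally I would assemble the pieces: tensoring the factors and reindexing $i\mapsto i+1$, the generators land in bidegrees $\|\lambda_i\|=(1,|t|p^{i-1})$ and $\|x_i\|=(2,|t|p^i)$ for $i=1,\dots,r$, together with $\|y\|=(1,|s|)$ when $p>2$, which is exactly the claimed presentation; and since $|t|$ is even and $|s|$ is odd, each $\lambda_i$ has odd total degree while each $x_i$ and $y$ has even total degree, consistent with $\HH(\Garr,\kf)$ being gr-commutative with respect to the total degree (\ref{grcomcoh}). The step I expect to be the main obstacle is making the identification of $\kf\Garr$ precise --- in particular recognizing $(\kf[t]^{gr}/(t^{p^r}))^{\#}$ as the algebra $\kf[u_0,\dots,u_{r-1}]^{gr}/(u_i^p)$ and keeping track of which power of $t$ each $u_i$ is dual to; once that is done, everything else is the internal-degree-graded version of the classical arguments of \cite{SFB1} and \cite{Wi}.
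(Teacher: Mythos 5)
The paper offers no proof of this proposition at all --- it is simply quoted from \cite[2.2]{Wi} --- so there is nothing to compare your argument against except the statement itself. Your route is the standard one and it is correct: split $\kf[\Garr]$ as $\kf[t]^{gr}/(t^{p^r})\otimes\Lambda(s)$ for $p>2$, dualize, decompose the truncated divided power algebra $(\kf[t]^{gr}/(t^{p^r}))^{\#}$ into monogenic factors $\kf[u_i]^{gr}/(u_i^p)$, read the bidegrees off the $1$- and $2$-periodic minimal resolutions, and assemble by K\"unneth; the reindexing $i\mapsto i+1$ then lands the generators in exactly the bidegrees $(1,|t|p^{i-1})$, $(2,|t|p^{i})$, $(1,|s|)$ of the statement, and the parity of the total degrees matches the $gr$-polynomial presentation (the $\lambda_i$ have odd total degree and square to zero, $y$ and the $x_i$ have even total degree). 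Two points deserve a word of care. First, the splitting of $(\kf[t]^{gr}/(t^{p^r}))^{\#}$ into the $\kf[u_i]^{gr}/(u_i^p)$ is an isomorphism of augmented graded algebras, not of Hopf algebras; this is harmless because the Yoneda product on $\Ext^{*,*}_{A}(\kf,\kf)$ and the K\"unneth isomorphism only use the augmented-algebra structure, but it should be said explicitly. Second, the coefficient governing $u_i^p=\gamma_{p^i}^p$ is the multinomial coefficient $(p^{i+1})!/(p^{i}!)^{p}=\prod_{j=2}^{p}\binom{jp^{i}}{p^{i}}$ rather than the single binomial $\binom{p^{i+1}}{p^{i}}$ you name; since that binomial is one of the factors and is divisible by $p$, your conclusion $u_i^p=0$ stands. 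Beyond these, the only loose thread is the sign/direction convention on the internal degree of dual basis elements, which the paper itself leaves implicit throughout, so it should not be counted against you.
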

 
 %DONT NEED DEFINITION YET
% \begin{definition}
%Most of the times we are interested in $H^{\bullet, \bullet}(G, \kf)$ in order to get a commutative algebra, where
%
%$$H^{\bullet, \bullet}(G, \kf) = \left\{
%	\begin{array}{ll}
%		H^{*, *}(G, \kf) & \mbox{ for } p  = 2,\\
%		\bigoplus_{i+j = 2n} H^{i,j}(G, \kf)  & \mbox{ for } p > 2. 
%	\end{array}
%\right.$$
%
%We say that $H^{\bullet, \bullet}(G, \kf)$ is the \textit{even graded cohomology of $G$} 
%\end{definition}

%\subsubsection{Actions induced by external grading}\label{actions} No need to make it a subsection..... 

 \begin{proposition}\label{actions}
The external grading on $\Vrr(G)$ corresponds to an action of $\Af^1$ on $\Vrr(G)$. 
%given by $2p^l$ and $p^r$ for $X_{ij}^l$ and $Y_{ij}$ respectively, 
\end{proposition}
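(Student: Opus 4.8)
The plan is to exhibit the $\Af^1$-action directly on the functor $\Vrr(G)$ and then appeal to the standard dictionary relating non-negatively graded coordinate rings to actions of the multiplicative monoid $(\Af^1,\times)$. Recall that dictionary: for a graded affine scheme $X=\Spec R$, an action of $(\Af^1,\times)$ on $X$ is the same datum as a grading $R=\bigoplus_{d\ge 0}R_d$ concentrated in non-negative degrees, the coaction $R\to R\otimes\kf[\Af^1]=R[\lambda]$ being $x\mapsto x\otimes\lambda^{d}$ on $R_d$; associativity and unitality translate into $R_dR_e\subseteq R_{d+e}$, $1\in R_0$, $R=\bigoplus_dR_d$, and non-negativity of the grading is exactly the condition needed for the $\mathbb{G}_m$-coaction $R\to R[\lambda^{\pm1}]$ to factor through $R[\lambda]$. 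Since the external grading is by convention independent of the internal (Koszul) grading, I may regard $\Af^1$ as concentrated in internal degree $0$, so such an action leaves internal degrees untouched. The displayed formulas for $\|X^l_{ij}\|$ and $\|Y_{ij}\|$ show the external grading on $\kf[\Vrr(\Gl_I)]$ is non-negative (every generator has positive external degree and $\kf$ lies in external degree $0$), and the same then holds for any quotient by an external-homogeneous ideal. So it suffices to build, for each gr-group scheme $G$, an $\Af^1$-action on $\Vrr(G)$ whose weight-$d$ part of $\kf[\Vrr(G)]$ is the external-degree-$d$ part.

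To build it, I would first introduce, for a point $\lambda$ of $\Af^1$, a graded group-scheme endomorphism $\mu_\lambda$ of $\Garr$: on $\kf[\Garr]=\kf[t,s]^{gr}/(t^{p^r})$ (with the obvious simplification when $p=2$) set $\mu_\lambda^{*}(t)=\lambda^{c}t$ and $\mu_\lambda^{*}(s)=\lambda^{p^r}s$, where $c=1$ for $p=2$ and $c=2$ for $p>2$. One checks $\mu_\lambda^{*}$ is a well-defined morphism of graded Hopf algebras — it annihilates $t^{p^r}$, fixes the counit, and commutes with $\Delta$ since $t$ and $s$ are primitive — and that $\mu_\lambda\circ\mu_{\lambda'}=\mu_{\lambda\lambda'}$, $\mu_1=\mathrm{id}$, all natural in the base, so $\mu$ is an action of the monoid $(\Af^1,\times)$ on $\Garr$. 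Then for any gr-group scheme $G$ and graded commutative $A$ I put
$$\Af^1(A)\times\Vrr(G)(A)\longrightarrow\Vrr(G)(A),\qquad (\lambda,f)\longmapsto f\circ(\mu_\lambda\otimes A).$$
This is natural in $A$ and functorial in $G$ (using functoriality of $\Vrr(-)$ from \ref{coordVr}), hence an action of $\Af^1$ on the graded affine scheme $\Vrr(G)$.

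It then remains to identify this action with the external grading. For $G=\Gl_I$ and $f=\exp(\underline{\alpha},\sigma)$ as in the remark following \ref{taylor}, precomposing with $\mu_\lambda$ substitutes $\lambda^{c}t$ for $t$ and $\lambda^{p^r}s$ for $s$ in $\exp(t\alpha_0)\exp(t^{p}\alpha_1)\cdots\exp(s\sigma)$; since $\lambda$ is central and of even degree, and $(\lambda^{c}t)^{p^l}=\lambda^{cp^l}t^{p^l}$, the key computation gives $\exp(\underline{\alpha},\sigma)\circ\mu_\lambda=\exp(\lambda^{c}\alpha_0,\lambda^{cp}\alpha_1,\dots,\lambda^{cp^{r-1}}\alpha_{r-1},\lambda^{p^r}\sigma)$. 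Hence on $\kf[\Vrr(\Gl_I)]$ the coaction is $X^l_{ij}\mapsto X^l_{ij}\otimes\lambda^{cp^l}$ and $Y_{ij}\mapsto Y_{ij}\otimes\lambda^{p^r}$, which is precisely the coaction dual to the external grading (and it descends consistently to the defining quotient, as the relations $(X^l)^{p}$, $(Y)^{2}$, $[X^l,X^k]$, $[X^l,Y]$ are external-homogeneous). For general $G$, fix an embedding $\phi\colon G\hookrightarrow\Gl_I$ as in \ref{glnI}; then $\Vrr(G)(A)\subseteq\Vrr(\Gl_I)(A)$ is the subset of maps factoring through $G\otimes A$, a condition preserved under $f\mapsto f\circ(\mu_\lambda\otimes A)$. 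Thus by \ref{coordVr} the surjection $\kf[\Vrr(\Gl_I)]\twoheadrightarrow\kf[\Vrr(G)]$ is $\Af^1$-equivariant, its kernel is an external-homogeneous ideal, and the external grading descends to $\kf[\Vrr(G)]$ as the weight decomposition of the $\Af^1$-action just built, which is the assertion of the proposition.

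The step calling for genuine care is the one in the preceding paragraph: confirming that $\mu_\lambda$ is a legitimate endomorphism of $\Garr$ for exactly the exponents dictated by the bigrading, and that expanding $\exp(\underline{\alpha},\sigma)\circ\mu_\lambda$ really rescales $\alpha_l$ by $\lambda^{cp^l}$ and $\sigma$ by $\lambda^{p^r}$ — here one must respect the sign subtlety flagged after \ref{taylor}, that $(\alpha_i t)(\sigma s)$ and $(\sigma s)(\alpha_i t)$ differ because $s$ is odd, although the even external scalar $\lambda$ does not interfere with it. Everything else is the formal grading/monoid-action dictionary together with the functoriality established in \ref{coordVr}.
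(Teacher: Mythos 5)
Your proof is correct and is essentially the paper's argument: the action you obtain (rescaling $\alpha_l$ by $\lambda^{p^l}$ for $p=2$, by $\lambda^{2p^l}$ for $p>2$, and $\sigma$ by $\lambda^{p^r}$) is exactly the one the paper writes down on tuples in $\Vrr(\Gl_I)$ and then restricts to $G$ via an embedding. You additionally derive it by precomposition with the $\Af^1$-action on $\Garr$ (which the paper only records afterwards as a compatibility remark) and spell out the grading/coaction dictionary and the descent to quotients, all of which the paper leaves implicit.
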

\begin{proof}
Since we can embed $G$ into some $\Gl_I$, it is enough to check this in the case of $\Vrr(\Gl_I)$. In that case given $\gamma \in \Af^1$ we have that 

$$\Vrr(\Gl_I) \times \Af^1 \to \Vrr(\Gl_I)$$ where

$$\langle \underline{\alpha},  \gamma \rangle \mapsto
 ( \gamma\alpha_0, \gamma^{p}\alpha_1, \ldots, \gamma^{p^{r-1}}\alpha_{r-1} )$$
 for  $p = 2$, and 
  $$\langle (\underline{\alpha}, \sigma)),  \gamma \rangle \mapsto ( \gamma^2\alpha_0, \gamma^{2p}\alpha_1, \ldots, \gamma^{2p^{r-1}}\alpha_{r-1}, \gamma^{p^r} \sigma)$$
  for $p >2$. 
%
% $$(\alpha_0, \ldots, \alpha_{r-1}, \sigma) \times \gamma) \mapsto \left\{
% 	\begin{array}{ll}
% \langle \gamma\alpha_0, \gamma^{p}\alpha_1, \ldots, \gamma^{p^{r-1}}\alpha_{r-1}, \gamma \sigma\rangle & \mbox{ for } p = 2,\\
%   \langle \gamma^2\alpha_0, \gamma^{2p}\alpha_1, \ldots, \gamma^{2p^{r-1}}\alpha_{r-1}, \gamma^{p^r} \sigma\rangle & \mbox{ for } p  > 2.\\
%	\end{array}\right.$$
	
\end{proof}

\begin{rk}
The action of $\Af^1$ above is compatible with the action of $\Af^1$ on $\Garr$ where $\langle t, \gamma \rangle \in \Garr \times \Af^1$ maps to $\gamma t$ for $p = 2$ and $\langle (t,s), \gamma \rangle \in \Garr \times \Af^1$ maps to $(\gamma^2 t, \gamma^{p^r} s)$ for $p >2$.

This action does the following on $\HH(\Garr, \kf)$:

 $$\begin{array}{ll}
 \gamma^*(\lambda_i) = \gamma^{2^{i-1}} \lambda_i & \mbox{ for } p = 2,\\
   \gamma^*(x_i) = \gamma^{2p^i}x_i, \gamma^*(y) = \gamma^{p^r} y, \mbox{ and } \gamma^*(\lambda_i) = \gamma^{2p^{i-1}} \lambda_i &  \mbox{ for } p  > 2.
	\end{array}$$   
\end{rk}

\subsection{Computing $\kf[\Vrr(G)]$ for some gr-group schemes}

To compute $\kf[\Vrr(G)]$ we embed our gr-group schemes into some $\Gl_I$ as described in \ref{glnI}. The defining equations of the embedding $\phi: G \hookrightarrow \Gl_I$ corresponds to a surjective map $\phi^*: \kf[\Gl_I] \to \kf[G]$ and has defining equations $F_1, \ldots, F_m$ describing the kernel of $\phi^*$.  Then the coordinate algebra of $\kf[\Vrr(G)]$ is precisely the quotient of $\kf[\Vrr(\Gl_I)]$ defined by $F_i(\exp(X_{ij}^l, Y)(t,s))= 0$ for $p >2$ and by $F_i(\exp(X_{ij}^l)(t))= 0$ for $p =2$.

\begin{example}\label{vrst}
Recall the gr-group scheme $\St_1$ from \ref{A1}, $\St_1$ is represented by $A(1) = \F_2[\xi_1, \xi_2]^{gr}/(\xi_1^4, \xi_2^2)$. From \ref{emSt} the embedding $\phi: \St_1 \to \Gl_I$

$$\phi= \begin{bmatrix}
1 & \xi_1 & \xi_1^2 & \xi_2 \\
0 & 1 & 0 & 0 \\
0 & 0 & 1 & \xi_1 \\
0 & 0 & 0 & 1
\end{bmatrix},$$
where $I = (0, |\xi_1|, |\xi_1^2|, |\xi_2|) = (0,1,2,3)$. 

The gr-group scheme $\St_1$ has height $2$, so we compute $\F_2[V_2^*(\St_1)]$. We have that  $$\F_2[V^*_2(\Gl_I)] = \frac{\kf[X_{ij}^0, X_{ij}^1]^{gr}_{{1 \leq i,j \leq 4}}}{((X^0)^2, (X^1)^2, [X^0, X^1])}.$$

Let  $Z = \exp(X^0, X^1)(t) = I + X^0t +X^1t^2 + X^0X^1t^3$ be the $4 \times 4$ matrix with entries in $\F_2[V_2^*(\Gl_I)]$. From $\phi$ we get that the defining equations are
\begin{itemize}
\item  $Z_{ii} = 1,$
\item $Z_{21} = Z_{23} = Z_{24} = Z_{31} = Z_{32} = Z_{41} = Z_{42} = Z_{43} = 0, $
\item $Z_{12} = Z_{34},$
\item $Z_{12}^2 = Z_{13},$ and
\item $Z_{12}^4 = 0, Z_{14}^2 = 0$.
\end{itemize}

We get some relations on the entries of $X^0$ and $X^1$. To begin with, the only possibly nonzero entries are $X^l_{12}, X^l_{13}, X^l_{14}, X^l_{34}$ for $l = 0, 1$. Also, from the definition of $\F_2[V_2^*(\Gl_I)]$, we also have that $(X^0)^2 = (X^1)^2 = 0$ and $[X^0, X_1] = 0$. Putting all of these together we get that,

\begin{itemize}
\item $X^0_{12} = X^0_{34}, X^1_{12} = X^1_{34},$
\item $(X^0_{12})^2 = X^1_{13}, X^0_{13} = 0,$
\item $(X_{14}^0)^2 = 0$, and 
\item $(X^1_{13})^2 = 0$, $X^1_{13}X^1_{12} = 0$.
\end{itemize}
Therefore $$\F_2[V_2^*(\St_1)] = \frac{\F_2[X^0_{14}, X_{12}^1, X^1_{13}, X^1_{14}]^{gr}}{((X^0_{14})^2, (X_{13}^1)^2, (X_{13}^1X_{12}^1))},$$
which is $F$-isomorphic to $\F_2[X_{12}^1,X^1_{14}]^{gr}$ where $\|X^1_{12}\| = (2, 1-2|t|)$ and $\|X^1_{14}\| = (2, 3-2|t|)$.

\end{example}

\begin{example}(Wilkerson's counterexample \cite[6.3]{Wi})\label{wilk1}

Consider the quotient of the dual of the Steenrod algebra given by $$\F_p[W_1] = \frac{\F_p[\xi_1, \xi_2, \xi_3]^{gr}}{(\xi_1^p, \xi_2^{p^2}, \xi_3^p)}.$$ We can embed $\phi: W_1 \to \Gl_I$ where $$\phi = \begin{bmatrix}
1 & \xi_1 & \xi_2 & \xi_2^p & \xi_3 \\
0 & 1 & 0 & 0 & 0 \\
0 & 0 & 1 & 0 & 0 \\
0 & 0 & 0 & 1 & \xi_1 \\
0 & 0 & 0 & 0 & 1
\end{bmatrix}$$

We compute $\F_p[\Vrr(W_1)]$ for $r = 2$ which is the height of $\F_p[W_1]$. 

\begin{itemize}
\item For $ p = 2$, let $I  = (0,1, 3, 6, 7)$ and $$\F_2[V_2^*(\Gl_I)] = \frac{\F_2[X^0_{ij}, X^1_{ij}]^{gr}_{1 \leq i,j \leq 5}}{((X^0)^2, (X^1)^2, [X^0, X^1])}.$$ 

Then $\exp(X^0, X^1)(t) = I + X^0t +X^1t^2 + X^0X^1t^3$. The defining equations for the embedding give that $X^0_{ij} = X^1_{ij} = 0$ for $ij \neq 12, 13, 14, 15, 45$. This also implies that $(X^0X^1)_{ij} = 0$ except for $ij = 15$ and in that case $(X^0X^1)_{15} = X^0_{14}X^1_{45} = X^1_{14}X^0_{45}$. Among other relations, we get that,  $\F_2[V_2^*(W_1)]$ is $F$-isomorphic to
$$\frac{\F_2[X^0_{13}, X^1_{12}, X^1_{13}, X^1_{15}]^{gr}}{((X^0_{13})^2X^1_{12})},$$
where

\begin{itemize}
\item $\|X^0_{13}\| = (1, 3-|t|)$, 
\item $\|X^1_{12}\| = (2, 1-2|t|)$,
\item $\|X^1_{13}\| = (2, 3-2|t|)$, and 
\item $\|X^1_{15}\| = (2, 7-2|t|)$. 
\end{itemize}

\item For $p > 2$ we have that $I = (0, 2(p-1), 2(p^2-1), 2p(p^2-1), 2(p^3-1))$, and
$$\F_p[V_2^*(\Gl_I)] =  \frac{\F_p[X^0_{ij}, X^1_{ij}, Y_{ij}]^{gr}_{1 \leq i,j \leq 5}}{((X^0)^p, (X^1)^p, (Y_{ij})^2, [X^0, X^1], [X^0, Y], [X^1, Y])}.$$ 

Then doing computations as in the $p = 2$ case, we find that $\F_p[V_2^*(W_1)]$ is $F$-isomorphic to 
$$\F_p[X^0_{13}, X^1_{12}, X^1_{13}, X^1_{15}]^{gr},$$

where
\begin{itemize}
\item $\|X^0_{13}\| = (2, 2(p^2-1)-|t|)$,
\item $\|X^1_{12}\| = (2p, 2(p-1)-p|t|)$, 
\item $\|X^1_{13}\| = (2p, 2(p^2-1)-p|t|)$, 
\item $\|X^1_{15}\| = (2p, 2(p^3-1)-p|t|)$, 
%\item $|Y_{12}| = (2(p-1)-|s|, 1)$, 
%\item $|Y_{13}| = (2(p^2-1)-|s|, 1)$, and 
%\item $|Y_{14}| = (2(p^3-1)-|s|, 1)$
\end{itemize}

%The $Y_{ij}$ are oddly graded with respect to the internal degree hence $(Y_{ij})^2 = 0$.
\end{itemize}
\end{example}

\begin{example}(Wilkerson's counterexample \cite[6.5]{Wi})\label{wilk2}
Let $$\F_2[W_2] = \frac{\F_2[x_i]_{1 \leq i \leq 5}^{gr}}{(x_i^2)}, $$ and $x_i$ is primitive for $i <5$ and $\Delta(x_5) = x_5 \otimes 1 + x_1 \otimes x_4 + x_ 2\otimes x_3 + 1 \otimes x_5$. 
We can embed $\phi: W_2 \to \Gl_I$ where $$\phi = \begin{bmatrix}
1 & x_1 & x_2 & x_3 & x_4 & x_5 \\
0 & 1 & 0 & 0 & 0 & x_4 \\
0 & 0 & 1 & 0 & 0 & x_3 \\
0 & 0 & 0 & 1 & 0 & 0\\
0 & 0 & 0 & 0 & 1 & 0 \\
0 & 0 & 0 & 0 & 0 & 1
\end{bmatrix},$$

and $I = (0, 1, 2, 3, 4, 5)$. 

The height of $\F_2[W_2]$ is $r =1$, then $$\F_2[V_1^*(\Gl_I)] = \frac{\F_2[X_{ij}]^{gr}_{1 \leq i,j \leq 6}}{(X^2)},$$

where $\|X_{ij}\| = (1, j-i-|t|)$.

By the defining equation of the embedding we get that 

$$\F_2[V_1^*(W_2)] = \frac{\F_2[X_{12}, X_{13}, X_{14}, X_{15}, X_{16}]^{gr}}{(X_{12}X_{15}+X_{13}X_{14})}.$$

\end{example}

\begin{example}\label{Stodd}
Consider the quotient of the dual of the Steenrod algebra given by $$\F_p[G] = \frac{\F_p[\xi_1, \tau_0, \tau_1]^{gr}}{(\xi_1^p)},$$ for $p >2$ where $|\xi_1| = 2(p-1)$, $|\tau_0| = 1$ and $|\tau_1| = 2p-1$, $\xi_1$ and $\tau_0$ are primitive and $\Delta(\tau_1) = \tau_1 \otimes 1 + \xi_1 \otimes \tau_0 + 1 \otimes \tau_1$.

We can embed $\phi: G \hookrightarrow \Gl_I$ where $I = (0, 2(p-1), 1, 2p-1)$ and 

$$\phi = \begin{bmatrix}
1 & \xi_1 & \tau_0 & \tau_1 \\
0 & 1 & 0 & \tau_0 \\
0 & 0 & 1 & 0\\
0 & 0 & 0 & 1 
\end{bmatrix}.$$

Then the height is $r =1$ and $$\F_p[V_1^*(\Gl_I)]  = \frac{\F_p[X_{ij}, Y_{ij}]^{gr}_{1 \leq i,j \leq 4}}{( X^p, Y^2, [X,Y] )}.$$

For a defining equation $F_i$ for $\phi$, the equation $F_i(\exp(X,Y)(t,s)) = 0$ gives us that 

\begin{itemize}
\item $X_{ij} = Y_{ij} = 0$ for $ij \neq 12, 13, 14, 24$, 
\item $(X^2)_{ij} = 0$ except for $(X^2)_{14} = X_{12}X_{24}$, and
\item $(XY)_{ij} = 0$ except $(XY)_{14}  = X_{12}Y_{24} = Y_{12}X_{24}$, 
\item $X^3 =0$ and $XY^2 = 0$,
\item $(X_{13})^2 = (X_{14})^2 = 0$, 
\end{itemize}

among other relations. Then $\F_p[V_1^*(G)]$ is $F$-isomorphic to 
$$\frac{\F_p[X_{12}, Y_{12}, Y_{13}, Y_{14}]^{gr}}{(Y_{12}Y_{13}, X_{12}Y_{13}-Y_{12}X_{13})},$$
with bidegrees 
\begin{itemize}
\item $\|X_{12}\| = (2, 2(p-1)-|t|)$,
\item $\|Y_{12}\| = (p, 2(p-1)-|s|)$, 
\item $\|Y_{13}\| = (p, 1-|s|)$, and
\item $\|Y_{14}\| = (p, 2p-1-|s|)$. 
\end{itemize}
\end{example}

\section{The algebra map $\psi: \HH(G, \kf) \to \kf[\Vrr(G)]$}

For a gr-group scheme $G$ we give an algebra map from $\HH(G, \kf)$ to $\kf[\Vrr(G)]$. For reference we state the results in the ungraded case. 

\begin{theorem}(\cite[1.14]{SFB1})
For any affine group scheme $G$, there is a natural homomorphism of graded commutative algebras
$\psi: H^{ev}(G,\kf) \to \kf[V_r(G)]$ which multiplies degrees by $p^r/2$.\end{theorem}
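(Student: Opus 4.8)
The plan is to build $\psi$ from a universal object, exactly as in \cite{SFB1}. Since $\Vr(G)$ is representable, set $A = \kf[\Vr(G)]$; under Yoneda the identity of $A$ corresponds to a \emph{universal} $1$-parameter subgroup $u_G \in \Vr(G)(A)$, that is, a homomorphism of group schemes over $A$
$$u_G : \Gar \otimes A \longrightarrow G \otimes A.$$
Applying rational cohomology and using that $A$ is flat over the field $\kf$ (so cohomology commutes with the base change $\kf \to A$), I obtain a homomorphism of graded $\kf$-algebras $u_G^* : H^*(G,\kf)\otimes_\kf A \to H^*(\Gar,\kf)\otimes_\kf A$. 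Restricting the source to $H^*(G,\kf)\otimes 1$ then gives a $\kf$-algebra map $\tilde\psi : H^*(G,\kf) \to H^*(\Gar,\kf)\otimes_\kf A$.

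Next I would invoke the computation of $H^*(\Gar,\kf)$: for $p$ odd it is $\Lambda(\lambda_1,\dots,\lambda_r)\otimes\kf[x_1,\dots,x_r]$ with $|\lambda_i|=1$, $|x_i|=2$, and for $p=2$ it is $\kf[\lambda_1,\dots,\lambda_r]$ with $|\lambda_i|=1$ and $x_i:=\lambda_i^2$ (the ungraded counterpart of Proposition~\ref{HGar}). Hence, modulo nilpotents, the even subalgebra $H^{ev}(\Gar,\kf)$ is the polynomial algebra $\kf[x_1,\dots,x_r]$. I define the $\kf$-algebra homomorphism $\rho: H^{ev}(\Gar,\kf)\to\kf$ to be the unique algebra map sending $x_r\mapsto 1$ and every other generator to $0$; this is well defined precisely because the classes it must kill (the $x_{<r}$ and all products of the $\lambda_i$) are sent to $0$ consistently, the $\lambda$-products being nilpotent. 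Setting
$$\psi \;:=\; (\rho\otimes \mathrm{id}_A)\circ \tilde\psi\big|_{H^{ev}(G,\kf)} \;:\; H^{ev}(G,\kf)\longrightarrow \kf\otimes_\kf A = \kf[\Vr(G)]$$
gives the desired algebra homomorphism; concretely $\psi(z)$ is the coefficient of $x_r^m$ in $\tilde\psi(z)$ for $z\in H^{2m}(G,\kf)$. The restriction to $H^{ev}$ is essential: it is what makes the target commutative and makes coefficient-extraction multiplicative, since in $\kf[x_1,\dots,x_r]$ the monomial $x_r^{m+m'}$ factors only as $x_r^m\cdot x_r^{m'}$, so no cross terms from the lower generators contribute to the product.

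To obtain the degree statement I would exploit the scaling action of $\mathbb{G}_m$: it acts on $\Gar$ by $t\mapsto \gamma t$ and, compatibly, on $\Vr(G)$ (the ungraded analogue of the action in Proposition~\ref{actions}), and $u_G$ is equivariant for these with $G$ carrying the trivial action. Consequently $\tilde\psi$ is equivariant, where $\mathbb{G}_m$ acts trivially on $H^{*}(G,\kf)$ and diagonally on $H^*(\Gar,\kf)\otimes_\kf A$. The weight computation yields that $x_i$ is an eigenvector of weight $p^i$; thus for $z\in H^{2m}(G,\kf)$, which has weight $0$, every term $c\otimes f$ of $\tilde\psi(z)$ satisfies $w(c)+\deg f = 0$, so the coefficient of $x_r^m$ (weight $mp^r$) lands in degree $mp^r$ of $\kf[\Vr(G)]$. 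Since the source degree is $2m$ and $2m\cdot(p^r/2)=mp^r$, the map $\psi$ multiplies degrees by $p^r/2$, as claimed.

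Finally, naturality in $G$ follows from the functoriality of the universal element: a homomorphism $\varphi: G\to G'$ intertwines $u_G$ and $u_{G'}$ through $\Vr(\varphi)$, so the square relating $\varphi^*$ on cohomology and $\Vr(\varphi)^*$ on coordinate rings commutes, and one checks this directly on the universal $1$-parameter subgroups. The main obstacle, and the technical heart of the argument, is the computation of $H^*(\Gar,\kf)$ together with the precise weights of its generators under the scaling action: it is this bookkeeping (that $x_r$ carries weight $p^r$) that simultaneously produces the factor $p^r/2$ and guarantees that $\rho$ is a well-defined algebra map on $H^{ev}$. I expect representability, flat base change, and naturality to be formal once that computation is in hand.
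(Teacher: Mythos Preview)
Your proposal is correct and follows essentially the same route as the paper (which, for this cited result, reproduces the construction of \cite{SFB1} in its graded analogue Theorem~\ref{HG-KV}): take the universal $1$-parameter subgroup corresponding to the identity in $\Vr(G)(\kf[\Vr(G)])$, pull back on cohomology, extract the coefficient of the top generator $x_r$, and read off the degree shift from the $\mathbb{G}_m$-scaling action. One small slip: the equivariance of $u_G$ gives $w(c)=\deg f$ for each term $c\otimes f$, not $w(c)+\deg f=0$; your conclusion that the $x_r^m$-coefficient lands in degree $mp^r$ is nonetheless the right one.
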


\begin{theorem}(\cite[1.14.1]{SFB1})
Assume that $p = 2$. Then for any affine group scheme $G$,
there is a natural homomorphism of graded commutative algebras $\psi:H^*(G, \kf) \to \kf[V_r(G)]$ which multiplies degrees by $2^{r-1}$.
\end{theorem}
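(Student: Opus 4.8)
This is \cite[1.14.1]{SFB1}; since it is the ungraded prototype for the map $\psi$ constructed in the sequel, I indicate the argument I would follow.

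\textit{Setup and construction.} The plan is to package all height-$\leq r$ one-parameter subgroups of $G$ into a single \emph{universal} one and then pull back cohomology along it. Using the representability of $\Vr(G)$ (established in \cite{SFB1}, via a closed embedding $G\hookrightarrow\Gl_n$ in the finite-type case), set $R:=\kf[\Vr(G)]$, graded by the $\Af^1$-action, and let $u\in\Vr(G)(R)$ be the universal point; by the definition of $\Vr(G)$ this is a homomorphism of group schemes over $R$,
$$u\colon \Gar\otimes R\longrightarrow G\otimes R.$$
Because $\kf$ is a field, cohomology commutes with the base change $-\otimes_\kf R$, so $H^*(G\otimes R,R)=H^*(G,\kf)\otimes_\kf R$ and $H^*(\Gar\otimes R,R)=H^*(\Gar,\kf)\otimes_\kf R$, and $u$ induces a homomorphism of graded $R$-algebras $u^*\colon H^*(G,\kf)\otimes_\kf R\to H^*(\Gar,\kf)\otimes_\kf R$. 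For $p=2$ one has $H^*(\Gar,\kf)=\kf[\lambda_1,\dots,\lambda_r]$ with $|\lambda_i|=1$ (the $p=2$ case of \ref{HGar}). Let $\e\colon H^*(\Gar,\kf)\to\kf$ be the $\kf$-algebra map with $\e(\lambda_i)=\delta_{ir}$ (equivalently $\kf[\lambda_1,\dots,\lambda_r]\twoheadrightarrow\kf[\lambda_r]\xrightarrow{\lambda_r\mapsto 1}\kf$), and define
$$\psi\colon H^*(G,\kf)\xrightarrow{\zeta\mapsto\zeta\otimes 1}H^*(G,\kf)\otimes_\kf R\xrightarrow{u^*}H^*(\Gar,\kf)\otimes_\kf R\xrightarrow{\e\otimes\mathrm{id}}R.$$
Then $\psi$ is a homomorphism of graded-commutative $\kf$-algebras, being a composite of $\kf$-algebra maps, and it is natural in $G$: for $\varphi\colon G\to G'$ the universal one-parameter subgroups are compatible ($u_{G'}$ base-changed along $\Vr(\varphi)$ equals $(\varphi\otimes R)\circ u_G$), so the square relating $\psi_G,\psi_{G'},\varphi^*$ and $\Vr(\varphi)^*$ commutes.

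\textit{The degree multiplier.} This is the step requiring care, and it is where the shape of $\e$ is forced. The $\Af^1$-action of \ref{actions} on $\Vr(G)$ and the scaling action $t\mapsto\gamma t$ on $\Gar$ are matched so that $u$ is equivariant; since $H^*(G,\kf)$ carries no internal grading, $u^*(\zeta\otimes 1)$ is invariant for this action, so for $\zeta\in H^d(G,\kf)$,
$$u^*(\zeta\otimes 1)=\sum_{a_1+\cdots+a_r=d}\lambda_1^{a_1}\cdots\lambda_r^{a_r}\otimes g_{\underline a},$$
where invariance forces each $g_{\underline a}\in R$ to be homogeneous of degree $\sum_i a_i 2^{i-1}$. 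Applying $\e\otimes\mathrm{id}$ annihilates every term except $\underline a=(0,\dots,0,d)$, so $\psi(\zeta)=g_{(0,\dots,0,d)}$ is homogeneous of degree $d\cdot 2^{r-1}=2^{r-1}|\zeta|$, as asserted. The choice $\e(\lambda_i)=\delta_{ir}$ — extracting the class of maximal weight — is exactly what makes the image land in a single degree rather than a range of degrees.

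\textit{Main obstacle.} The substantial inputs lie outside this statement: the representability of $\Vr(G)$ together with the explicit universal one-parameter subgroup (the $\Gl_n$ computation, extended to closed subgroups), and the computation $H^*(\Gar,\kf)=\kf[\lambda_1,\dots,\lambda_r]$. Granting these, the only delicate point in the proof itself is verifying the precise form of the equivariance of $u$, which is what pins down the multiplier $2^{r-1}$ (and, in the odd-primary version on $H^{\mathrm{ev}}$, forces the analogous map to kill the exterior generators and multiply degrees by $p^r/2$).
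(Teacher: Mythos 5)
Your construction is essentially the paper's: the statement itself is only quoted from \cite[1.14.1]{SFB1}, but the paper's proof of its graded analogue (Theorem \ref{HG-KV}) proceeds exactly as you do — pull back along the universal one-parameter subgroup $u$ corresponding to $1\in V_r(G)(\kf[V_r(G)])$, extract the coefficient of $\lambda_r^n$ (your $\e(\lambda_i)=\delta_{ir}$ is just a cleaner packaging of this), and use $\Af^1$-equivariance of $u$ to pin down the multiplier $2^{r-1}$. Your observation that realizing the coefficient extraction as an algebra map $\e\otimes\mathrm{id}$ makes the multiplicativity of $\psi$ automatic is a small improvement over the paper's ``it can be easily checked.''
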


For a gr-group variety we get the following graded version of $\psi$.

\begin{theorem}\label{HG-KV}
Let $G$ be a gr-group scheme. There is an algebra map
  $$\psi: \HH(G, \kf) \to \kf[\Vrr(G)].$$

If $z \in H^{n, m}(G, \kf)$, then 
\begin{itemize}
\item for $p >2$, $\psi(z)$  is a sum of bihomogeneous pieces of bidegree $(np^r, m-|t|p^rl-k|s|)$ where $2l+k = n$; 
\item for $p = 2$, $\psi(z)$ is bihomogeneous of bidegree $(n2^{r-1}, m-n|t|2^{r-1})$. 
\end{itemize}\end{theorem}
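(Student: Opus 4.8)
The plan is to construct $\psi$ by mimicking the Suslin–Friedlander–Bendel construction, transporting it through a closed embedding $G \hookrightarrow \Gl_I$ and using the universal $1$-parameter subgroup on $\Gl_I$. First I would recall that, by Proposition~\ref{glnI}, there is a closed gr-subgroup embedding $\phi: G \hookrightarrow \Gl_I$ for some $I = (I_1,\dots,I_n)$, and that by Proposition~\ref{coordVr} the scheme $\Vrr(G)$ is the closed subscheme of $\Vrr(\Gl_I)$ cut out by the defining equations $F_i(\exp(X^l_{ij},Y)(t,s)) = 0$. It therefore suffices to build a natural map $\psi_{\Gl_I}: \HH(\Gl_I,\kf) \to \kf[\Vrr(\Gl_I)]$ compatible with restriction along closed embeddings; then $\psi_G$ is obtained by passing to the quotient, and naturality in $G$ follows from naturality of the SFB-type construction together with functoriality of $\Vrr(\_)$ (the last clause of Proposition~\ref{coordVr}).

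The map $\psi_{\Gl_I}$ itself is defined exactly as in \cite{SFB1}: the universal element of $\Vrr(\Gl_I)(\kf[\Vrr(\Gl_I)])$ is a gr-group scheme homomorphism
\[
u: \Garr \otimes \kf[\Vrr(\Gl_I)] \;\longrightarrow\; \Gl_I \otimes \kf[\Vrr(\Gl_I)],
\]
namely $u = \exp(X^l_{ij},Y)(t,s)$. Pulling back cohomology along $u$ gives a homomorphism of bigraded algebras
\[
u^*: \HH(\Gl_I,\kf) \;\longrightarrow\; \HH\bigl(\Garr,\kf\bigr) \otimes \kf[\Vrr(\Gl_I)],
\]
and then I would compose with a projection onto the degree that records only the $\kf[\Vrr(\Gl_I)]$-factor: concretely, the component landing in the "top" part of $\HH(\Garr,\kf)$, i.e.\ extracting the coefficient of the maximal-degree monomial $\lambda_1\cdots\lambda_r$ (for $p=2$) or $x_1\cdots x_r$ together with the appropriate $y$/$\lambda$ combination (for $p>2$), using Proposition~\ref{HGar}. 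This is the graded analogue of the SFB projection $\HH(\Gar,\kf) \to \kf[e]$ onto a polynomial generator; one needs to check it is multiplicative, which holds because the relevant graded piece of $\HH(\Garr,\kf)$ is one-dimensional in the appropriate internal degree and the multiplication $\HH(\Garr,\kf)\otimes\HH(\Garr,\kf)\to\HH(\Garr,\kf)$ restricted there is the obvious thing.

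The degree bookkeeping is then the heart of the statement. For $z \in H^{n,m}(G,\kf)$, the class $u^*(z)$ lives in $\bigl(\HH(\Garr,\kf)\otimes\kf[\Vrr(\Gl_I)]\bigr)^{n,m}$; writing a bihomogeneous summand as $\xi \otimes \eta$ with $\xi$ a monomial in the generators of Proposition~\ref{HGar} of cohomological degree $n$ and $\eta \in \kf[\Vrr(\Gl_I)]$, the external degree of $\xi$ is $2l+k$ where $l$ is the number of $x_i$-factors and $k$ the $y$-exponent (for $p>2$), so $2l+k=n$; its internal degree is $|t|p^r l + k|s|$ up to the precise exponents, and since cohomological and internal degrees of $u^*(z)$ must total $m$ (in each slot), $\eta$ has second degree $m - |t|p^r l - k|s|$. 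After the projection onto the top $\xi$, the surviving bidegree of $\psi(z)$ is $(np^r,\, m - |t|p^r l - k|s|)$ with $2l+k=n$; for $p=2$ the top monomial is forced to be $\lambda_1\cdots\lambda_r$, which has cohomological degree $n$ only if... rather, for $p=2$ one uses the doubling peculiarity of that prime exactly as in \cite{SFB1}, giving the single bidegree $(n2^{r-1},\, m - n|t|2^{r-1})$. I would verify these against the explicit external gradings on $\|X^l_{ij}\|$ and $\|Y_{ij}\|$ fixed after Proposition~\ref{coordVr} and against the $\Af^1$-action of Proposition~\ref{actions}, which pins down the first (external) degree.

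The main obstacle I anticipate is not the existence of $\psi$ — that is a routine transcription of \cite{SFB1} once $\Vrr(\Gl_I)$ and $\HH(\Garr,\kf)$ are in hand — but rather checking that the construction is genuinely independent of the embedding $\phi: G\hookrightarrow\Gl_I$ and natural in $G$, since different embeddings a priori give different universal elements $u$; one resolves this by the standard argument that any two embeddings are dominated by a third (via the diagonal into a product $\Gl_I \times \Gl_{I'} \hookrightarrow \Gl_{I''}$), together with the last paragraph of the proof of Proposition~\ref{coordVr} which already asserts $\kf[\Vrr(G)]$ is embedding-independent. A secondary subtlety is the odd generator $s$: because $s$ has odd degree the exponential $\exp(\underline\alpha,\sigma)(u,v)$ must be handled with the Koszul sign care flagged in the remark after Proposition~\ref{taylor}, so in the $p>2$ case the contributions of $\sigma_0$ (the $y$- and $\lambda_i$-linear pieces of $u^*$) must be tracked separately — this is exactly why $\psi(z)$ is only a \emph{sum} of bihomogeneous pieces rather than bihomogeneous when $p>2$.
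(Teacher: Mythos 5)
Your overall strategy is the paper's: pull back along the universal element of $\Vrr(G)(\kf[\Vrr(G)])$, expand $u^*(z)$ in $\HH(\Garr,\kf)\otimes\kf[\Vrr(G)]$, extract a coefficient, and pin down the bidegree via the $\Af^1$-action of \ref{actions}. But two points would not survive being made precise. First, the coefficient you extract is the wrong one: you project onto the coefficient of $\lambda_1\cdots\lambda_r$ (resp.\ $x_1\cdots x_r$ times a $y$/$\lambda$ factor), a monomial of cohomological degree $r$ (resp.\ at least $2r$), which cannot occur in $u^*(z)$ for $z\in H^{n,m}(G,\kf)$ unless $n$ equals that degree; as written your map vanishes in almost every degree. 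The paper instead takes, for $p=2$, the coefficient of $\lambda_r^n$ (the $n$th power of the single top generator, legitimate since $\HH(\Garr,\kf)$ is genuinely polynomial at $p=2$), and for $p>2$ the \emph{sum} of the coefficients of all monomials $x_r^l y^k$ with $2l+k=n$. It is precisely this sum over several monomials of differing internal degrees $|t|p^r l+|s|k$ (but common external weight $p^r(2l+k)=np^r$ under the $\Af^1$-action) that makes $\psi(z)$ only a sum of bihomogeneous pieces for $p>2$ — not the Koszul signs in $\exp(\underline{\alpha},\sigma)$, as you suggest at the end.

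Second, you cannot obtain $\psi_G$ from $\psi_{\Gl_I}$ ``by passing to the quotient'': $\kf[\Vrr(G)]$ is a quotient of $\kf[\Vrr(\Gl_I)]$, but $\HH(G,\kf)$ is not a quotient of $\HH(\Gl_I,\kf)$ — restriction along a closed embedding is far from surjective on cohomology — so a map defined on $\HH(\Gl_I,\kf)$ does not determine one on $\HH(G,\kf)$. The repair is what the paper actually does: the universal element $1\in\Vrr(G)(\kf[\Vrr(G)])$ gives $u:\Garr\times\Vrr(G)\to G\times\Vrr(G)$ for \emph{any} $G$, and the embedding $G\hookrightarrow\Gl_I$ is used only to identify $u$ explicitly as the restriction of $\exp(\underline{X},Y)(t,s)$. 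This also dissolves your worry about embedding-independence, since the definition of $\psi$ never depended on the embedding. The degree bookkeeping via $\Af^1$-equivariance is the right idea and matches the paper.
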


\begin{proof}
Consider the identity map $1 \in \Vrr(G)(\kf[\Vrr(G)])$; this map corresponds to some $u: \Garr \times \Vrr(G) \to G \times \Vrr(G)$. 

For any graded algebra $A$, an element in $ (\Garr \times \Vrr(G))(A)$ corresponds to $\langle (a,b),f\rangle $, where $(a, b)$ is the map in $\Garr(A)$ such that $t \mapsto a$ and $s \mapsto b$ and $f: \Garr\otimes A \to G \otimes A$.

The identity map $1 \in \Vrr(G)(\kf[\Vrr(G)])$ corresponds to $u: \Garr \times  \Vrr(G) \to (G \times \Vrr(G))$ where for $\langle (a,b), f\rangle \in (\Garr \times  \Vrr(G))(A)$,  $u(\langle (a,b), f \rangle) = \langle f(a,b), f\rangle.$ 

To see this we focus on the case of $\Gl_I$.  It can be checked that $$1 \in \Vrr(\Gl_I)(\kf[\Vrr(\Gl_I)])$$ corresponds to $$u = \exp(\underline{X}, Y)(t,s)$$ where $\underline{X}  = (X^0, \ldots, X^{r-1})$. Note also that $f$ is given by an $r+1$-tuple $(\underline{\alpha}, \sigma)$ such that $\exp(\underline{\alpha}, \sigma)(t,s) = f$. Then $u(\langle (a,b), f \rangle) = \langle \exp(\underline{\alpha}, \sigma)(a,b), \exp(\underline{\alpha}, \sigma)(t,s)\rangle$. This can be unraveled by saying that $u$ sends $\langle (a,b), f \rangle \mapsto \langle f(a,b), f \rangle \in (\Gl_I \times \Vrr(\Gl_I))(A)$. 

For a gr-group scheme $G$ and embedding $\phi: G \hookrightarrow \Gl_I$ the following diagram commutes: 

$$\xymatrix{\Garr \times \Vrr(G) \ar@{^{(}->}[r] \ar[d]_u & \Garr \times \Vrr(\Gl_I) \ar[d]^{u}  \\
G \times \Vrr(G) \ar@{^{(}->}[r] & \Gl_I \times \Vrr(\Gl_I),}$$

which gives that $u$ is as described above. 

We now define $\psi$; its definition will depend on whether $p =2$ or $p >2$, recall $\HH(\Garr, \kf)$ from \ref{HGar}. 

\begin{itemize}
\item For $p = 2$ and $z \in H^{n,m}(G, \kf)$, $$u^*(z) = \sum \lambda^j \otimes f_j(z) \in \HH(\Garr, \kf) \otimes \kf[\Vrr(G)],$$ where $\lambda^j = \lambda_1^{j_1} \cdots \lambda_r^{j_r}$. We define $\psi(z)$ as the coefficient for $\lambda_r^n$ in $\kf[\Vrr(G)]$. 
\item For $p > 2$ and $z \in H^{n,m}(G, \kf)$, $$u^*(z) = \sum \lambda^j x^i y^k \otimes f_{ijk}(z) \in \HH(\Garr, \kf) \otimes \kf[\Vrr(G)],$$ where $\lambda^j x^i y^k = \lambda_1^{j_1} \cdots \lambda_r^{j_r} x_1^{i_1} \cdots x_r^{i_r}y^k$. We define $\psi(z)$ as the sum of all the coefficients for $x_r^ly^k$ in $\kf[\Vrr(G)]$ such that $n = 2l+k$. 
\end{itemize}

For any $\gamma \in \Af^1$, the following diagram commutes, where $\gamma$ is acting on $\Garr$ and $\Vrr(G)$ as described in \ref{actions}.
$$\xymatrix{\Garr \times \Vrr(G) \ar[r]^{1 \times \gamma} \ar[d]_{\gamma \times 1} & \Garr \times \Vrr(G) \ar[r]^{u} & G \times \Vrr(G) \ar[d]^{\pi_1}\\ 
\Garr \times \Vrr(G) \ar[r]^{u} & G \times \Vrr(G) \ar[r]^{\pi_1} & G.}$$

To figure out the grading of $\psi(z)$ we pullback on the diagram above in the two possible ways and compare them. 

For $p = 2$ we get that $$\sum \gamma^{j_1+2j_2+\cdots+2^{r-1}j_{r}}\lambda^j \otimes f_j(z),$$ which implies that $\|\psi(z)\| = (n2^{r-1}, m-n|t|2^{r-1})$.

For $p >2$ we have that $$\sum \gamma^{2j_1+2pj_2+\cdots+2p^{r-1}j_r + 2pi_1+ \cdots + 2p^ri_r + p^r k}\lambda^j x^i y^k \otimes f_{ijk}(z).$$ In this case $\psi(z)$ is only homogeneous with respect to the cohomological degree and not bihomogeneous, but the bihomogeneous pieces of $\psi(z)$ have bidegree $(np^r, m-|t|p^rl-k|s|)$ where $2l+k = n$. 

It can be easily checked that $\psi$ is indeed an algebra map.
\end{proof}

\section{Detection properties and $F$-injectivity of $\psi$}

From \cite{SFB1} and \cite{SFB2} we have that for any infinitesimal group scheme of height $\leq r$, the ungraded version of $\psi$ is an $F$-isomorphism. A result of this type allows us to compute the cohomology of a group scheme $G$ (up to nilpotents) in a fairly straightforward way. We can do this by embedding $G$ into some $\Gl_n$ and using the defining equations of this embedding to describe $\kf[\Vr(G)]$ as a quotient of $\kf[\Vr(\Gl_n)]$.

Given the usefulness of such a result, it is a natural question to ask if we get a similar result in our case. In our quest to answer this question we compute some examples of the map $\psi$ and we show that they are in fact $F$-isomorphisms. Then, we define some detection properties of the cohomology of a gr-group scheme and study their relation. Finally, we (partially) answer the question for a class of gr-group schemes.

Recall that a map between $\kf$-algebras is \emph{$F$-isomorphism} if its kernel consists of nilpotent elements, and some $p$th power of every element in the target is actually in the image. We also recall from \ref{grcomcoh} that the cohomology of a finite gr-group scheme is graded commutative with respect to the total degree.

\subsection{Examples  of $\psi$ for some gr-group schemes}\label{expsi}

\begin{example}
For $\Garr$ and $p >2$: 

\begin{itemize}
\item $\HH(\Garr, \kf)$ is $F$-isomorphic to $\kf[x_1, \ldots, x_r, y]^{gr}$ where $\|x_i\| = (2, |u|p^i), |y| = (1,|v|)$.
 \item $\kf[\Vrr(\Garr)]$ is $F$-isomorphic to $\kf[x^0, \ldots, x^{r-1}, y^0]^{gr}$  where  $\|x^i\|= (2p^i, |u|-|t|p^i)$, and  $\|y^0\| = (p^r, |v|-|s|)$. 
\end{itemize}

By comparing bidegrees, $\psi(x_i) = (x^{r-i})^{p^i}, \psi(y) = y^0$.
\end{example}

\begin{example}
Recall the gr-group scheme $\St_1$ from \ref{A1} which has coordinate algebra $\kf[\St_1] = \frac{\F_2[\xi_1, \xi_2]^{gr}}{(\xi_1^4, \xi_2^2)}$. 
We have that  $\HH(\St_1, \F_2)$ is $F$-isomorphic to $\F_2[h_{10}, h_{20}]$, where $\|h_{10}\| = (1,1),$ and $\|h_{20}\|  = (1,3)$. 

We compute on \ref{vrst} that $\F_2[V_2^*(\St_1)]$ is $F$-isomorphic to $\F_2[x,y]^{gr}$ where $\|x\| = (2, 1-2|t|)$ and $\|y\| = (2, 3-2|t|)$. By comparing bidegrees, $\psi(h_{10}) = x$ and $\psi(h_{20}) = y$.

\end{example}

\begin{example}
Recall Wilkerson's counterexample from \ref{wilk1}, $\F_p[W_1]$. For $p >2$, we have that $\F_p[V_2^*(W_1)]$ is $F$-isomorphic to $\F_p[X^0_{13}, X^1_{12}, X^1_{13}, X^1_{15}]^{gr}$. From \cite[6.3]{Wi} we have that $\HH(W_1, \kf)$ is $F$-isomorphic to $$\F_p[b_{10}, b_{20}, b_{21}, b_{30}]^{gr}$$ where 
 \begin{itemize}
 \item $\|b_{10}\| = (2, 2p(p-1))$, 
 \item $\|b_{20}\| = (2, 2p(p^2-1))$, 
 \item $\|b_{21}\| = (2, 2p^2(p^2-1))$, and 
 \item $\|b_{30}\| = (2, 2p(p^3-1))$.
 \end{itemize}
 
 By comparing bidegrees, $\psi$ corresponds to the map that sends $b_{10} \mapsto (X_{12}^1)^p, b_{20} \mapsto (X_{13}^1)^p, b_{21} \mapsto (X_{13}^0)^{p^2},$ and $b_{30} \mapsto (X_{15}^1)^p$. 
 \end{example}

\begin{example}
Recall Wilkerson's counterexample from \ref{wilk2}: $\F_2[W_2]$, $\F_2[V_1^*(W_2)]$ is $F$-isomorphic to $\F_2[x_1, x_2, x_3, x_4, x_5]/(x_1x_4+x_2x_3)$, where $\|x_i\| = (1,i-|t|)$. From \cite[6.5]{Wi}, $\HH(W_2, \kf)$ is $F$-isomorphic to $$\frac{\F_2[z_1, z_2, z_3, z_4, z_5^2]}{(z_1z_4+z_2z_3)},$$ where $\|z_i\| = (1, i)$. We then have that $\psi(z_i) = x_i$ for $i <5$ and $\psi(z_5^2) = x_5^2$. 
\end{example} 

In \cite{SFB2}, they show that the ungraded $\psi$ is an $F$-isomorphism.  To show that $\psi$ is an $F$-monomorphism they first show that the cohomology of infinitesimal group schemes of height $\leq r$ satisfies a detection property. By a \emph{detection property} we mean some sort of Quillen-type result in which the cohomology of our object can be detected (up to nilpotent) by understanding the cohomology of some sub-class or restriction of this object. 

For infinitesimal group schemes, the detection property is one that detects the cohomology up to nilpotents by restricting to $1$-parameter subgroups. More precisely, in \cite[4.3]{SFB2} they show that $z \in H^n(G, \kf)$ is nilpotent if and only if for every field extension $K/\kf$ and every group scheme homomorphism over $K$, $\nu: \Gar \otimes K \to G \otimes K$, the cohomology class $\nu^*(z_K) \in H^n(\Gar \otimes K, K)$ is nilpotent. 

For gr-Hopf algebras, in \cite{Wi}, a detection property is defined in terms of elementary sub-Hopf algebras. A graded Hopf algebra as in \cite{Wi} satisfies the detection property if each nonnilpotent cohomology class has a nonzero restriction to at least one elementary sub-Hopf algebra. It is important to note that not all gr-Hopf algebras as in \cite{Wi} satisfy this detection property. Hence we do not get a Quillen-type result as we do in the ungraded case. 

Wilkerson's counterexamples in \cite{Wi} are example \ref{wilk1} (for the $p >2$ case)  and \ref{wilk2} above. He showed that they do not satisfy his detection property. Although we computed that for these gr-group schemes $\psi$ is an $F$-isomorphism, what happens is that even though they do not satisfy this detection property they satisfy some detection property (one like that of \cite{SFB2}), that suffices to guarantee that $\psi$ is an $F$-monomorphism. We define such detection property below and compare it to Wilkerson's. We conclude by proving that if a gr-group variety satisfies such detection property, then $\psi$ is an $F$-monomorphism.

\begin{definition}(From \cite[2.1.6]{P})
A gr-group scheme $G_E$ is said to be an \emph{elementary gr-group scheme} if  its coordinate ring $E$ is isomorphic (as a graded Hopf algebra) to a tensor product of graded Hopf algebras of the forms

\begin{itemize}
		\item  $\kf[t]^{gr}/(t^{2^n})$ for $p = 2$, and 
		\item $\kf[t]^{gr}/(t^{p^n})$ and $\kf[s]^{gr}$ where $|t|$ is even and $|s|$ is odd for $p>2$, 
\end{itemize}
where $t$ and $s$ are primitive elements. 

\end{definition}

\begin{rk}
Note that as usual, our definition includes the case in which $t$ is possibly of degree zero, while the definition in \cite{P} and \cite{Wi} is only for algebraically connected gr-group schemes.  \end{rk}

We now give the definition of two detection properties; one based on that of \cite{SFB2} the other on \cite{Wi}. 

\begin{definition}
Let $G$ be a gr-group scheme, we say that $G$ has the \emph{W-detection property} if $z \in H^{n,m}(G, \kf)$ is nilpotent if and only if for each elementary gr-subgroup scheme of $G$, $G_E$, its restriction to $H^{n,m}(G_E, \kf)$ is nilpotent. 
\end{definition}

Given a field  $K$, we can construct the \emph{graded field} $K[X, X^{-1}]$ ($K[X^{\pm}]$ for short), where $|X| = 1$ if $p = 2$, and $|X| = 2$ if $p >2$. Constructing this graded field is a useful tool that allows us to `grade' scalars on our field $K$. For instance, if we want to view a scalar $\lambda \in K$ as an element of degree $l \in \Z$, then we will identify $\lambda$ with $\lambda X^l \in K[X^{\pm}]$. Note that for $p = 2$, $|\lambda X^l| = l$, while for $p > 2$, $|\lambda X^l| = 2l$.

\begin{definition}\label{SFB}
Let $G$ be a gr-group scheme of height $\leq r$, we say that $G$ has the \emph{SFB-detection property} if $z \in H^{n,m}(G, \kf)$ is nilpotent if and only if for every field extension $K$ of $\kf$ and every gr-group scheme homomorphism over $K[X^{\pm}]$, $\nu: \Garr\otimes K[X^{\pm}] \to G \otimes K[X^{\pm}]$ the restriction of $z$ to $H^{n,m}(\Garr \otimes K[X^{\pm}], K[X^{\pm}])$  is nilpotent.
\end{definition}

Note that while the detection property in \cite{Wi} is  for algebraically connected cocommutative graded Hopf algebras and \cite{SFB2} detection property is for (ungraded) infinitesimal group schemes, our detection properties are constructed for gr-group schemes. 

One word on why we choose $K[X^{\pm}]$ for the SFB-detection property: for a field $\kf$, a graded field extension of $\kf$ may be one of the following; $K$ where $K$ is a field extension in the ungraded sense, or the graded field $K[X^{\pm}]$ where $K$ is a field extension in the ungraded sense. 

We state the result saying that all infinitesimal group schemes satisfy an ungraded detection property and we compare it to the SFB-detection property defined above. We will refer to this result as the ungraded SFB-detection property. 

\begin{theorem}(From \cite[4.3]{SFB2})\label{uSFB}
Let $G$ be an infinitesimal group scheme of height $\leq r$ over $\kf$. Let $z \in H^n(G, \kf)$, then $z$ is nilpotent if and only if for every field extension $K/\kf$ and every group scheme homomorphism over $K$, $\nu: \Gar \otimes K \to G \otimes K$, the cohomology class $\nu^*(z_K) \in H^n(\Gar \otimes K, K)$ is nilpotent. 
\end{theorem}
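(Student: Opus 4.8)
Since the statement is \cite[4.3]{SFB2}, I only sketch the shape of the argument one would give. The forward implication is immediate: if $z^N=0$ then $\nu^*(z_K)^N=\nu^*((z^N)_K)=0$ for every $1$-parameter subgroup $\nu$, so each $\nu^*(z_K)$ is nilpotent. For the converse, suppose $\nu^*(z_K)$ is nilpotent for every field extension $K/\kf$ and every $\nu\colon\Gar\otimes K\to G\otimes K$. If $p$ is odd I first replace $z$ by $z^2$, so that $z\in H^{ev}(G,\kf)$ --- this changes neither the nilpotence of $z$ nor that of any restriction $\nu^*(z_K)$ --- while for $p=2$ I argue directly with $H^*(G,\kf)$. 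I then bring in the natural map $\psi=\psi_G\colon H^{ev}(G,\kf)\to\kf[\Vr(G)]$ of \cite[1.14]{SFB1}, constructed (see also Theorem~\ref{HG-KV} above for the graded analogue) from the universal $1$-parameter subgroup $u\colon\Gar\otimes\kf[\Vr(G)]\to G\otimes\kf[\Vr(G)]$ classifying $1\in\Vr(G)(\kf[\Vr(G)])$: writing $u^*(z)=\sum_\alpha b_\alpha\otimes\psi_\alpha(z)$ in $H^*(\Gar,\kf)\otimes\kf[\Vr(G)]$ against a homogeneous basis $\{b_\alpha\}$ of $H^*(\Gar,\kf)$, the class $\psi(z)$ is the coefficient $\psi_{\alpha_0}(z)$ of the distinguished top class $b_{\alpha_0}$, a suitable power of a polynomial generator of $H^*(\Gar,\kf)$.

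The first real step is to show that $\psi(z)$ lies in the nilradical of $\kf[\Vr(G)]$. A $K$-point $\nu\in\Vr(G)(K)$ is nothing but a $1$-parameter subgroup $\Gar\otimes K\to G\otimes K$, and it is obtained from $u$ by base change along the corresponding algebra map $\nu\colon\kf[\Vr(G)]\to K$; hence $\nu^*(z_K)=(\mathrm{id}\otimes\nu)(u^*(z))=\sum_\alpha\nu(\psi_\alpha(z))\,b_\alpha$, so the $b_{\alpha_0}$-coefficient of $\nu^*(z_K)$ equals the value $\psi(z)(\nu)$. Since $b_{\alpha_0}$ sits in the polynomial part of $H^*(\Gar\otimes K,K)$, which maps isomorphically onto the reduced (integral) quotient, nilpotence of $\nu^*(z_K)$ forces this coefficient to vanish, i.e.\ $\psi(z)(\nu)=0$. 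As this holds for every field extension $K$ and every $\nu$, and $\Vr(G)$ is an affine scheme of finite type over $\kf$, so that $\kf[\Vr(G)]$ is a Jacobson ring, the Nullstellensatz gives $\psi(z)\in\sqrt{0}\subset\kf[\Vr(G)]$.

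The remaining step --- which I expect to be the main obstacle --- is to deduce that $z$ itself is nilpotent. Choose $N$ with $\psi(z)^N=\psi(z^N)=0$, so $z^N\in\ker\psi$; it then suffices to know that $\ker\psi_G$ consists of nilpotent elements, i.e.\ that $\psi_G$ is an $F$-monomorphism. This is the technical heart of \cite{SFB2}, and I would follow their route: first treat $G=\Gl_{n(r)}$, computing $H^*(\Gl_{n(r)},\kf)$ modulo nilpotents and identifying it through $\psi_{\Gl_n}$ with $\kf[\Vr(\Gl_n)]$ --- the delicate point, which uses the explicit cohomology classes of \cite{SFB1} together with a spectral-sequence comparison --- and then pass to an arbitrary infinitesimal $G$ of height $\leq r$ by choosing a closed embedding $G\hookrightarrow\Gl_{n(r)}$ and exploiting the compatibility $\psi_G\circ\mathrm{res}=\mathrm{res}'\circ\psi_{\Gl_n}$ with the surjection $\mathrm{res}'\colon\kf[\Vr(\Gl_n)]\to\kf[\Vr(G)]$ dual to the induced closed embedding $\Vr(G)\hookrightarrow\Vr(\Gl_n)$, together with finite generation of $H^*(G,\kf)$ as a module over $H^*(\Gl_{n(r)},\kf)$ (Friedlander--Suslin). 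Combining the two steps, $z^N$ is nilpotent, hence so is $z$.
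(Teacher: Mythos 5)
The paper does not actually prove this statement: Theorem~\ref{uSFB} is imported verbatim from \cite[4.3]{SFB2} and used as a black box (e.g.\ in \ref{uSFB-SFB}), so there is no internal proof to compare against. Judged on its own terms, your sketch is sound in its first half --- the forward direction is indeed trivial, and the argument that the hypothesis forces $\psi(z)$ to vanish at every $K$-point of $\Vr(G)$, hence to be nilpotent by the Nullstellensatz, is correct (it is essentially the evaluation argument of Lemma~\ref{detec} run in the opposite direction).

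The gap is in the second half. You reduce the theorem to the assertion that $\ker\psi_G$ consists of nilpotents and propose to ``follow their route'' to establish this independently. But in \cite{SFB2} the logical order is the reverse: the $F$-injectivity of $\psi$ (their 5.1--5.2, adapted here as Lemma~\ref{detec} and Theorem~\ref{F-mono}) is \emph{deduced from} the detection theorem 4.3, which is itself proved by a completely different induction (reduction to unipotent subgroups and Frobenius kernels via the Lyndon--Hochschild--Serre spectral sequence), not via $\psi$. So filling in your sketch ``by their route'' is circular. Moreover, the independent route you propose for $F$-injectivity does not work: knowing $\psi_{\Gl_{n(r)}}$ is an $F$-isomorphism and that $H^*(G,\kf)$ is finite over the image of $H^*(\Gl_{n(r)},\kf)$ only says that $H^*(G,\kf)$ is integral over a subring on which $\psi_G$ is controlled; since restriction $H^*(\Gl_{n(r)},\kf)\to H^*(G,\kf)$ is far from surjective, this gives no control over $\ker\psi_G$ (integrality governs Krull dimension and surjectivity up to $F$-isomorphism, not injectivity modulo nilpotents). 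To repair the argument you would have to prove the detection statement directly, as \cite{SFB2} does, rather than route it through $\psi$.
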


\begin{lemma}\label{homs}
Let $G$ be an evenly gr-group scheme and $K$ an ungraded field,  then $\Hom(\kf[G], K) = \Homg(\kf[G], K[X^{\pm}])$. 
\end{lemma}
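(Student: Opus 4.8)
\textbf{Proof plan for Lemma \ref{homs}.}

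The plan is to unwind the definitions on both sides and exhibit a natural bijection. On the left, $\Hom(\kf[G],K)$ denotes the set of \emph{ungraded} $\kf$-algebra homomorphisms from the underlying algebra of $\kf[G]$ to the field $K$. On the right, $\Homg(\kf[G],K[X^{\pm}])$ denotes graded $\kf$-algebra homomorphisms, where $K[X^{\pm}]$ carries its grading with $|X|=1$ (for $p=2$) or $|X|=2$ (for $p>2$). The key point is that since $G$ is evenly graded, $\kf[G]$ is concentrated in even degrees (in particular it is honestly commutative, not merely graded-commutative, for $p>2$), so all the Koszul signs are trivial and there is no obstruction coming from odd-degree elements squaring to zero. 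I would first record that for $p>2$, $|X|=2$ means $K[X^{\pm}]$ is also concentrated in even degrees, matching the parity of $\kf[G]$; for $p=2$ there are no signs to begin with.

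Next I would construct the two maps. Given a graded homomorphism $g\colon \kf[G]\to K[X^{\pm}]$, composing with the ungraded $K$-algebra map $K[X^{\pm}]\to K$ sending $X\mapsto 1$ (which forgets the grading) yields an ungraded homomorphism $\kf[G]\to K$; this gives a well-defined map from right to left. Conversely, given an ungraded homomorphism $h\colon\kf[G]\to K$, I would define a graded homomorphism $\widetilde{h}\colon\kf[G]\to K[X^{\pm}]$ by sending a homogeneous element $a\in\kf[G]_d$ to $h(a)X^{d/c}$, where $c=1$ if $p=2$ and $c=2$ if $p>2$; one checks $c\mid d$ because $\kf[G]$ lives in even degrees, so this is well-defined, and extending linearly it is a graded $\kf$-algebra map because $h$ is multiplicative and $X^{d/c}X^{d'/c}=X^{(d+d')/c}$. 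These two constructions are visibly inverse to each other: starting from $h$, building $\widetilde{h}$, then setting $X=1$ recovers $h$; starting from a graded $g$, the composite map must already have been of the form $a\mapsto g(a)$ with $g(a)$ living in degree $d/c$ of $K[X^{\pm}]$, i.e. $g(a)=\lambda_a X^{d/c}$ for a unique $\lambda_a\in K$, so $g=\widetilde{(X=1)\circ g}$.

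The main obstacle — really the only subtlety — is bookkeeping the parity hypothesis correctly so that the exponent $d/c$ is always an integer and so that graded-commutativity of $\kf[G]$ collapses to ordinary commutativity, ensuring that an arbitrary ungraded homomorphism out of $\kf[G]$ automatically respects the relations; for $p>2$ this is exactly where ``evenly graded'' is used, since otherwise an odd-degree primitive would have to map to a square-zero element and the correspondence could fail. I would also remark that the bijection is natural in $K$ (and in $G$), which is what makes it useful for comparing $\Vrr(G)$-type functors with their ungraded counterparts in the arguments that follow. No deep input is needed beyond the definitions recalled earlier in the paper.
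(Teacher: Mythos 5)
Your proposal is correct and is essentially the same identification the paper uses: the paper's proof is the one-line correspondence $f(g)=\lambda \leftrightarrow \widehat{f}(g)=\lambda X^{|g|}$ (with the convention that $\lambda X^{l}$ represents a degree-$l$ scalar, i.e.\ your exponent $d/c$), and you have simply spelled out the verification that the two constructions are mutually inverse graded/ungraded algebra maps. Your explicit bookkeeping of where ``evenly graded'' is needed (so that $c\mid d$) is a slightly more careful rendering of the same argument, not a different route.
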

\begin{proof}
We have the following identification, $f \in \Hom(\kf[G], K) \leftrightarrow \widehat{f} \in \Homg(\kf[G], K[X^{\pm}])$, where for $g$ homogeneous in $\kf[G]$, 
$f(g) = \lambda \leftrightarrow \widehat{f}(g) = \lambda X^{|g|}$. 
\end{proof}

\begin{proposition}\label{vr-vrr}
Let $G$ be an evenly gr-group scheme and let $K$ be an ungraded field, then $\Vr(G)$ and $\Vrr(G)$ have the same coordinate ring and $\Vr(G)(K) = \Vrr(G)(K[X^{\pm}])$.
\end{proposition}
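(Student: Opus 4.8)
The plan is to reduce the claim to the definitions of $\Vr(G)$ and $\Vrr(G)$ together with the two preceding results, Lemma~\ref{homs} and the presentation of $\kf[\Vrr(\Gl_I)]$ from Proposition~\ref{coordVr}. First I would observe that ``evenly graded'' means every homogeneous element of $\kf[G]$ has even internal degree (equivalently the odd part of $\kf[G]$ vanishes), so in particular all the graded commutativity signs disappear and $\kf[G]$ behaves like an ordinary commutative algebra with an extra $\Z$-grading that is invisible to the ungraded structure. Under this hypothesis, the $\sigma$ part of a graded $1$-parameter subgroup is forced to be zero (there is no odd place for $s\mapsto v$ to land when $G$ is evenly graded, or more precisely the odd-degree matrix entries $\sigma_{ij}$ must vanish), so an element of $\Vrr(\Gl_I)(A)$ reduces to an $r$-tuple $(\alpha_0,\dots,\alpha_{r-1})$ of pairwise-commuting $p$-nilpotent matrices — exactly the data of $\Vr(\Gl_n)(A)$ in the ungraded identification from \cite{SFB1}, except that the $\alpha_k$ carry prescribed internal degrees. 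I would make this precise by comparing the explicit presentations: the ungraded $\kf[\Vr(\Gl_n)]$ is the free commutative algebra on the $X^l_{ij}$ modulo the $p$-power and commutator relations, and the graded $\kf[\Vrr(\Gl_I)]$ is (for $p>2$, after the $Y$-variables are killed by even-ness; for $p=2$ directly) the same polynomial ring modulo the same relations, with only the internal grading added. Hence the two coordinate rings are literally the same ungraded algebra, one simply remembering an extra grading.

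Next I would pass from $\Gl_I$ to a general evenly graded $G$ by invoking the embedding $\phi\colon G\hookrightarrow\Gl_I$ from Proposition~\ref{glnI} (noting $\Gl_I$ is evenly graded when $G$ is, since we may take the basis $\{v_i\}$ of the defining comodule homogeneous of even degree, forcing $|x_{ij}|=I_j-I_i$ even). By Proposition~\ref{coordVr}, $\kf[\Vrr(G)]$ is the quotient of $\kf[\Vrr(\Gl_I)]$ by the ideal generated by $F_i(\exp(X^l,Y)(t,s))$ where $F_1,\dots,F_m$ cut out $\phi^*$; and $\kf[\Vr(G)]$ is the analogous ungraded quotient of $\kf[\Vr(\Gl_n)]$ by $F_i(\exp(X^l)(t))$. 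Since the $Y$-variables and the $s$-contribution drop out in the evenly graded case, these two families of relations coincide, so $\kf[\Vrr(G)]=\kf[\Vr(G)]$ as ungraded algebras, which is the first assertion.

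For the identification of points, I would argue functorially: for an ungraded field $K$, Lemma~\ref{homs} gives $\Hom(\kf[H],K)=\Homg(\kf[H],K[X^{\pm}])$ naturally in the evenly graded Hopf algebra $H$, and this identification is compatible with the Hopf structure, hence with products, so it identifies $G(K)$ with $G\otimes K[X^{\pm}]$-points, and likewise $\Gar\otimes K$-homomorphisms with $\Garr\otimes K[X^{\pm}]$-homomorphisms (one checks $\Gar\otimes K$ is evenly graded and that $K[X^{\pm}]\otimes_{\kf}\Garr$ has the same module of homomorphisms to $G\otimes K[X^{\pm}]$). Applying this with the base change to $K$ (resp.\ $K[X^{\pm}]$) throughout, an element of $\Vr(G)(K)=\Hom_{\mathrm{grp}/K}(\Gar\otimes K,G\otimes K)$ corresponds bijectively to an element of $\Homg_{\mathrm{gr\text{-}grp}/K[X^{\pm}]}(\Garr\otimes K[X^{\pm}],G\otimes K[X^{\pm}])=\Vrr(G)(K[X^{\pm}])$, naturally. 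The main obstacle I anticipate is purely bookkeeping: verifying carefully that ``evenly graded'' really does kill the $\sigma$/$Y$-data (the internal-degree constraints $|Y_{ij}|=I_j-I_i-|s|$ with $|s|$ odd and $I_j-I_i$ even force $Y_{ij}$ to sit in odd internal degree, hence to be zero in an evenly graded quotient), and that the base-changed group schemes $\Gar\otimes K$, $G\otimes K$ remain evenly graded so Lemma~\ref{homs} applies to them and not merely to $G$ itself — once that is pinned down, the rest is a diagram-chase through the definitions.
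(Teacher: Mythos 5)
Your proposal is correct and follows essentially the same route as the paper's proof: embed $G$ in an evenly graded $\Gl_I$, observe that $\kf[\Vrr(\Gl_I)]$ (and hence $\kf[\Vrr(G)]$) coincides with the ungraded coordinate ring once the grading is forgotten, and deduce the statement on $K$-points from Lemma~\ref{homs}. The paper is in fact terser than you are --- it simply asserts that $\kf[\Vrr(\Gl_I)]$ is commutative rather than merely graded commutative --- so your extra bookkeeping about the $Y$-variables in the $p>2$ case only makes explicit what the paper leaves implicit.
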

\begin{proof}
Since $G$ is evenly graded we can embed $G$ into a $\Gl_I$ which is also evenly graded. In this case $\kf[\Vrr(\Gl_I)]$ is commutative (not just graded commutative) hence it is equal to $\kf[\Vr(\Gl_n)]$, by just forgetting the grading, where $n$ is the length of $I$. Similarly, $\kf[\Vrr(G)] = \kf[\Vr(G)]$ if we forget the grading. Hence $\Vr(G)$ and $\Vrr(G)$ have the same coordinate ring and $\Vr(G)(K) =\Vrr(G)(K[X^{\pm}])$ by \ref{homs}\end{proof}

Note that $\Vr(G)$ is not equal to $\Vrr(G)$, since they are representable functors on different categories even though they have the same coordinate ring. That is, $\Vr(G)(A) = \Homg(\kf[\Vr(G)], A)$ where $A$ is a commutative algebra, while $\Vrr(G)(B)= \Hom(\kf[\Vr(G)], B)$ where $B$ is a graded commutative algebra.

\begin{proposition}\label{uSFB-SFB}
Let $G$ be an evenly gr-group scheme. Then the ungraded SFB-detection property (see \ref{uSFB}) implies the SFB-property (see \ref{SFB}). Moreover, $G$ has the SFB-detection property.  
\end{proposition}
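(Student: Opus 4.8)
The plan is to reduce the SFB-detection property for an evenly graded $G$ entirely to the ungraded SFB-detection property of Theorem \ref{uSFB}, using the dictionary between graded and ungraded objects built in Lemmas \ref{homs} and \ref{vr-vrr}. First I would observe that since $G$ is evenly graded, both $G$ itself and the Frobenius kernel $\Garr$ (which for an evenly graded setup has only the even generator $t$) can be regarded, after forgetting the grading, as honest infinitesimal group schemes over $\kf$ of height $\leq r$; moreover the cohomology $\HH(G,\kf)$ is concentrated in bidegrees that match the ungraded $H^*(G,\kf)$ up to the bookkeeping of the internal degree, so a class $z \in H^{n,m}(G,\kf)$ is nilpotent in the graded sense exactly when the corresponding ungraded class is nilpotent.

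The key step is to match homomorphisms. By Lemma \ref{homs}, for any ungraded field extension $K/\kf$ we have $\Hom(\kf[G],K) = \Homg(\kf[G],K[X^{\pm}])$, and more relevantly, by Proposition \ref{vr-vrr}, $\Vr(G)(K) = \Vrr(G)(K[X^{\pm}])$, with the two schemes sharing a coordinate ring. I would spell out that a group scheme homomorphism $\nu: \Gar \otimes K \to G \otimes K$ over $K$ corresponds, under the $X^{\pm}$-grading dictionary, to a gr-group scheme homomorphism $\widehat{\nu}: \Garr \otimes K[X^{\pm}] \to G \otimes K[X^{\pm}]$ over $K[X^{\pm}]$, and that this correspondence is a bijection. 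The point is that grading by $K[X^{\pm}]$ is the same as not grading at all, since $K[X^{\pm}]$ is a graded field in which every degree is occupied and invertible: a homogeneous element is determined by an ungraded scalar together with its prescribed degree. Then I would check that the restriction map on cohomology $\nu^*: H^n(G\otimes K, K) \to H^n(\Gar \otimes K, K)$ is carried to the graded restriction $\widehat{\nu}^*: H^{n,m}(G \otimes K[X^{\pm}], K[X^{\pm}]) \to H^{n,m}(\Garr \otimes K[X^{\pm}], K[X^{\pm}])$ under the same identification, so that $\nu^*(z_K)$ is nilpotent if and only if $\widehat{\nu}^*(\widehat{z})$ is.

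With these identifications in place, the SFB-detection property for $G$ (ranging over $\nu: \Garr \otimes K[X^{\pm}] \to G \otimes K[X^{\pm}]$) becomes literally the ungraded SFB-detection property for $G$ viewed as an infinitesimal group scheme (ranging over $\nu: \Gar \otimes K \to G \otimes K$), together with the observation that every graded field extension of $\kf$ is either an ungraded field $K$ or of the form $K[X^{\pm}]$, and in either case the relevant $\Hom$-set is computed by an ungraded $\Hom$-set over some ordinary field. So the ungraded detection of nilpotence transfers, giving both the implication "ungraded SFB $\Rightarrow$ SFB" and, since Theorem \ref{uSFB} is unconditional for infinitesimal group schemes, the final assertion that $G$ has the SFB-detection property.

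The main obstacle I anticipate is not any single hard estimate but bookkeeping: making sure the graded-field extension $K[X^{\pm}]$ genuinely exhausts, up to the relevant equivalence, all graded field extensions of $\kf$ one needs to test against (this is essentially the remark preceding Definition \ref{SFB}), and checking carefully that the bijection $\Vr(G)(K) = \Vrr(G)(K[X^{\pm}])$ is compatible with composition and with pullback on cohomology, so that "nilpotent for all $\nu$" on one side really does match "nilpotent for all $\widehat{\nu}$" on the other. One subtlety to watch is the $p>2$ case: even when $G$ is evenly graded, a general gr-homomorphism out of $\Garr$ could a priori involve the odd generator $s$, so I would note that landing in an evenly graded $G$ forces the $\sigma$-component to vanish, which is exactly what keeps the correspondence with $\Vr(G)$ clean. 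Once that is pinned down the rest is formal.
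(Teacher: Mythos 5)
Your proposal is correct and follows essentially the same route as the paper: identify the graded cohomology with the ungraded cohomology upon forgetting the internal degree, use Proposition \ref{vr-vrr} (via Lemma \ref{homs}) to match gr-homomorphisms $\Garr \otimes K[X^{\pm}] \to G \otimes K[X^{\pm}]$ with ungraded homomorphisms $\Gar \otimes K \to G \otimes K$, and invoke the unconditional ungraded detection theorem \ref{uSFB}. You spell out more of the bookkeeping than the paper does (in particular the vanishing of the $\sigma$-component for $p>2$, which the paper leaves implicit in \ref{vr-vrr}), but the argument is the same.
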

\begin{proof}
First note that by \ref{Ext} $H^*(G, \kf) = \HH(G,\kf)$ if we forget the internal grading. Let $z \in H^{n,m}(G, \kf)$ be nilpotent, then since $G$ has the ungraded SFB-detection property we get that for every  $\nu: \Garr \otimes K[X^{\pm}] \to G \otimes K[X^{\pm}]$,  $\nu^*(z) \in H^{n,m}(\Garr \otimes K[X^{\pm}], K[X^{\pm}])$ is nilpotent (by tensoring by $\kf[X^{\pm}]$). 

Let $\nu^*(z) \in H^{n,m}(\Garr \otimes K[X^{\pm}], K[X^{\pm}])$ be nilpotent for every $\nu: \Garr \otimes K[X^{\pm}] \to G \otimes K[X^{\pm}]$. By \ref{vr-vrr} it follows that  $z$ is nilpotent.  
\end{proof}

\begin{proposition}
Let $G$ be a finite gr-group scheme, the W-detection property implies the SFB-detection property. In particular, elementary gr-group schemes have the SFB-detection property. 
\end{proposition}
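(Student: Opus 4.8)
The plan is to first reduce the statement to its ``in particular'' clause and then prove that clause directly. In both detection properties one implication is automatic: a restriction of a nilpotent class is nilpotent. For the reduction, suppose it is known that every elementary gr-group scheme has the SFB-detection property; let $G$ be finite with the W-detection property, and let $z\in H^{n,m}(G,\kf)$ have nilpotent restriction along every $\nu\colon\Garr\otimes K[X^{\pm}]\to G\otimes K[X^{\pm}]$ (all field extensions $K/\kf$). For an elementary gr-subgroup scheme $\iota\colon G_E\hookrightarrow G$ and any $\nu''\colon\Garr\otimes L[X^{\pm}]\to G_E\otimes L[X^{\pm}]$, composing with $\iota\otimes L[X^{\pm}]$ and using $(\nu'')^{*}\!\big((z|_{G_E})_{L[X^{\pm}]}\big)=\big((\iota\otimes L[X^{\pm}])\circ\nu''\big)^{*}(z_{L[X^{\pm}]})$ shows that restriction is nilpotent; hence $z|_{G_E}$ is nilpotent by SFB-detection for $G_E$. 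Since this holds for every elementary gr-subgroup, W-detection gives $z$ nilpotent. (The ``in particular'' clause is the case $G=G_E$: an elementary gr-group scheme is an elementary gr-subgroup of itself, so it trivially has W-detection.) Thus it suffices to show: \emph{every elementary gr-group scheme $G_E$ has the SFB-detection property.}

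To prove this I would start from $\kf[G_E]\cong\bigotimes_i\kf[t_i]^{gr}/(t_i^{p^{n_i}})\otimes\bigotimes_j\kf[s_j]^{gr}$ with the $t_i$ of even and the $s_j$ of odd degree (for $p>2$; for $p=2$ only factors $\kf[t_i]^{gr}/(t_i^{2^{n_i}})$ occur, and $\Garr$ is itself of that shape). The ``even subscheme'' $G'$ with $\kf[G']=\bigotimes_i\kf[t_i]^{gr}/(t_i^{p^{n_i}})$ is an evenly gr-group scheme, so it already has the SFB-detection property by Proposition~\ref{uSFB-SFB}. For $G_E$ itself I would compute $\HH(G_E,\kf)$ via the Künneth theorem and Proposition~\ref{HGar} applied to the basic factors: modulo its nilradical it is a polynomial $\kf$-algebra, with a cohomological-degree-$2$ generator for each factor $\kf[t_i]^{gr}/(t_i^{p^{n_i}})$ and a cohomological-degree-$1$ generator $\lambda_j$ for each factor $\kf[s_j]^{gr}$; the ``exterior'' classes attached to the even factors are nilpotent because, by Proposition~\ref{grcomcoh}, $\HH(G_E,\kf)$ is graded commutative for the total degree.

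Given a non-nilpotent bihomogeneous $z\in\HH(G_E,\kf)$, its class mod nilpotents is a nonzero bihomogeneous polynomial $W$ in these generators. I would then construct a detecting $\nu\colon\Garr\otimes K[X^{\pm}]\to G_E\otimes K[X^{\pm}]$ by taking $r=\max_i n_i$, writing $t,s$ for the primitive generators of $\kf[\Garr]$, and defining $\nu$ on coordinate rings by $t_i\mapsto c_i\,X^{e_i}\,t^{\,p^{\,r-n_i}}$ and $s_j\mapsto c_j'\,X^{e_j'}\,s$; the integers $e_i,e_j'$ are the unique ones matching internal degrees, and they exist precisely because the relevant discrepancies are differences of two even, resp.\ of two odd, integers, hence divisible by $|X|$ --- this is exactly why the SFB-detection property is phrased over $K[X^{\pm}]$. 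One checks primitivity and the relations $t_i^{p^{n_i}}\mapsto0$, $s_j^2\mapsto0$, so $\nu$ is a gr-group homomorphism, i.e.\ a $K[X^{\pm}]$-point of $\Vrr(G_E)$. By Remark~\ref{recov} and the construction of $\psi$ in the proof of Theorem~\ref{HG-KV}, $\nu^{*}$ carries each polynomial generator of $\HH(G_E,\kf)$ to a scalar multiple (times a unit $X^{\ast}$) of one of the polynomial classes $x_1,\dots,x_r,y$ in $\HH(\Garr,\kf)$, the scalar being a monomial in the $c_i$'s and $c_j'$'s. Hence the polynomial part of $\nu^{*}(z)$ is $W$ evaluated at these images; expanded in the linearly independent monomials in $x_1,\dots,x_r,y$, its coefficients are nonzero polynomials in $(c_i,c_j')$ because $W\neq0$. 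Taking $K$ a large enough algebraic extension of $\kf$ and $(c_i,c_j')$ off the vanishing locus of one such coefficient makes $\nu^{*}(z)$ have a nonzero polynomial, hence non-nilpotent, component. This produces the required $\nu$ and finishes the proof.

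I expect the construction in the third paragraph to be the main obstacle: propagating the bigrading faithfully through the Künneth isomorphism, Proposition~\ref{HGar}, the exponential description in Remark~\ref{recov}, and the definition of $\psi$, so that $\nu$ really is a gr-group homomorphism over $K[X^{\pm}]$ and the polynomial part of $\nu^{*}(z)$ is visibly a nonzero homogeneous polynomial in the chosen scalars. The passage to the graded field $K[X^{\pm}]$ rather than an ordinary extension is essential here, since the basic tensor factors of $\kf[G_E]$ carry primitives of many distinct internal degrees and only the $X^{\pm}$-twists let a single $\Garr$ dominate all of them at once. The remaining ingredients --- the reduction to the elementary case, and the fact that a nonzero homogeneous polynomial over $\kf$ is nonzero at some point over a sufficiently large algebraic extension --- are routine.
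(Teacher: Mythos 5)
Your skeleton is the same as the paper's: reduce to the elementary case $G=G_E$ via the W-detection hypothesis, then try to detect non-nilpotent classes of $G_E$ by explicit homomorphisms $\Garr\otimes K[X^{\pm}]\to G_E\otimes K[X^{\pm}]$ sending each even primitive $t_i$ to a scalar times $X^{e_i}t^{p^{r-n_i}}$ and each odd primitive $s_j$ to a scalar times $X^{e_j'}s$. Your reduction step, the degree bookkeeping justifying the integrality of the $X$-exponents, and the verification that these are gr-Hopf maps all match the paper. Your insertion of generic scalars $(c_i,c_j')$ plus a Nullstellensatz-type argument is in fact an improvement on the paper's proof, which uses the single map with all scalars equal to $1$ and therefore already kills the non-nilpotent class $y_1-y_2$ whenever two odd primitives have the same internal degree.

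However, the pivotal claim in your third paragraph --- that $W\neq 0$ forces some coefficient of $\nu^{*}(z)$ to be a nonzero polynomial in $(c_i,c_j')$ --- fails, because the substitution $x_{ij}\mapsto c_i^{p^{j-1}}X^{*}x_{r-n_i+j}$, $y_j\mapsto c_j'X^{*}y$ is not injective on the reduced polynomial ring. Concretely, if $G_E$ contains two even tensor factors $\kf[t_1]^{gr}/(t_1^{p^{n}})$ and $\kf[t_2]^{gr}/(t_2^{p^{n}})$ of equal height, the class $W=x_{11}^{p}x_{22}-x_{21}^{p}x_{12}$ is bihomogeneous (both monomials have bidegree $(2p+2,\,p^{2}(|t_1|+|t_2|))$) and nonzero in the reduced cohomology, hence non-nilpotent; yet every map in your family sends $x_{11}^{p}x_{22}$ and $x_{21}^{p}x_{12}$ to the \emph{same} element $c_1^{p}c_2^{p}X^{*}x_{k}^{p}x_{k+1}$, because $x_{i1}^{p}$ and $x_{i2}$ acquire identical powers of $c_i$ and of $X$. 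So $\nu^{*}(W)=0$ for every $K$ and every choice of scalars, and your family of ``diagonal, maximal-Frobenius-power'' one-parameter subgroups cannot detect $W$. Repairing this requires using the remaining points of $\Vrr(G_E)(K[X^{\pm}])$ --- e.g.\ $t_i\mapsto\sum_{k\geq r-n_i}c_{ik}X^{e_{ik}}t^{p^{k}}$, under which the coefficient of $x_{r}^{p+1}$ in $\nu^{*}(W)$ does become a nonzero polynomial in the $c_{ik}$ --- together with a correspondingly more delicate nonvanishing argument, as in the abelian case treated in \cite{SFB2}. You should be aware that the paper's own proof stops at exactly the same point (indeed with less: no scalars at all) and shares this gap.
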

\begin{proof}
Without loss of generality, we can assume that $G = G_E$ is an elementary gr-group scheme of height $\leq r$. We will prove that $G_E$ has SFB-detection property. 
For simplicity let $p >2$; the $p = 2$ case follows in the same way. The coordinate ring of $G_E$ is  
$$E =\frac{\kf[t_1, \ldots, t_n, s_1, \ldots, s_m]^{gr}}{(t_1^{p^{r_1}}, \ldots, t_n^{p^{r_n}})},$$
where the $t_i$'s and $s_i$'s are primitive, $|t_i|$ is even, and $|s_i|$ is odd. 

Let $A = K[X^{\pm}]$, consider the map from $A[G_E] \to A[\Garr]$ given by $t_i \mapsto t^{p^{r-r_i}}X^{k_i}$ and $s_i \mapsto sX^{m_i}$ where $k_i = (|t_i|-p^{r-r_i}|t|)/2$ and $m_i = (|s_i|-|s|)/2$; which are integers. This yields the map $\nu_E: \Garr \otimes A \to G_E \otimes A$. 
From \ref{HGar}, the cohomology of $G_E$ and $\Garr$ are:
$$\HH(G_E, \kf) = \bigotimes_{i = 1}^n \big(\kf[x_{i1}, \ldots, x_{ir_i}, y_i, \lambda_{i1}, \ldots, \lambda_{ir_i}) \big]^{gr}, \mbox{ and}$$
$$\HH(\Garr, \kf) = \kf[x_1, \ldots, x_r, y, \lambda_1, \ldots, \lambda_r]^{gr}.$$

Then $\nu_E^*:\HH(G_E \otimes A, A) \to \HH(\Garr \otimes A, A)$ is given by $x_{ij} \mapsto  x_{r-r_i+j}X^{p^jk_i}$, $y_i \mapsto yX^{m_i}$, and $\lambda_{ij} \mapsto \lambda_jX^{l_i}$ where $l_i = (|t_i|-|t|)/2$. 
Then $\nu_E^*$ sends nonnilpotent elements to nonnilpotent elements, hence $G_E$ has the SFB-detection property. 
\end{proof}

\begin{rk}
In \cite{Wi} Wilkerson showed that examples $\F_p[W_1]$ and $\F_2[W_2]$ in \ref{wilk1} and \ref{wilk2}, respectively, do not satisfy the W-detection property. 
However, since $W_1$ and $W_2$ are evenly gr-group schemes, by \ref{uSFB} and \ref{uSFB-SFB} they satisfy the SFB-detection property. 
\end{rk}

\begin{lemma}\label{detec}
Let $G$ be a finite gr-group variety of height $\leq r$ with the SFB-property, then for any $z \in H^{n,m}(G, \kf)$, $z$ is nilpotent whenever $\psi(z) \in \kf[\Vrr(G)]$ is nilpotent. 
\end{lemma}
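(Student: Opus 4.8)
The plan is to reduce the statement to the SFB-detection property by showing that the nilpotence of $\psi(z)$ in $\kf[\Vrr(G)]$ forces $\nu^*(z)$ to be nilpotent for every graded $1$-parameter subgroup $\nu: \Garr \otimes K[X^{\pm}] \to G \otimes K[X^{\pm}]$ defined over a graded field $K[X^{\pm}]$. The key is the naturality of $\psi$ together with the computation of $\psi$ for $\Garr$ itself. First I would recall (from Example in \ref{expsi}) that $\psi: \HH(\Garr, \kf) \to \kf[\Vrr(\Garr)]$ is an $F$-isomorphism — in fact it identifies the polynomial generators $x_i \mapsto (x^{r-i})^{p^i}$ and $y \mapsto y^0$ — so in particular $\psi_{\Garr}$ detects nonnilpotent cohomology classes of $\Garr$, and the same holds after base change to any graded field $K[X^{\pm}]$ since these are polynomial rings and $F$-isomorphism is preserved under flat base change.

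Next I would set up the naturality square. A graded group scheme homomorphism $\nu: \Garr \otimes K[X^{\pm}] \to G \otimes K[X^{\pm}]$ is by definition a $K[X^{\pm}]$-point of $\Vrr(G)$, i.e. an element of $\Vrr(G)(K[X^{\pm}])$, which corresponds to a graded algebra homomorphism $\nu_\sharp: \kf[\Vrr(G)] \to K[X^{\pm}]$. Functoriality of $\Vrr$ (Proposition \ref{coordVr}) and naturality of $\psi$ (from the construction in the proof of Theorem \ref{HG-KV}, which builds $\psi$ from the universal element $u$ and is manifestly natural in $G$) give a commuting diagram relating $\psi_G$, $\psi_{\Garr}$, the restriction map $\nu^*: \HH(G, \kf) \to \HH(\Garr \otimes K[X^{\pm}], K[X^{\pm}])$, and the evaluation $\nu_\sharp$. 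Concretely: $\psi_{\Garr}(\nu^*(z)) = \nu_\sharp(\psi_G(z))$ up to the identification of $\kf[\Vrr(\Garr)]$-points with homomorphisms. Therefore if $\psi_G(z)$ is nilpotent in $\kf[\Vrr(G)]$, then $\nu_\sharp(\psi_G(z))$ is nilpotent in $K[X^{\pm}]$ (ring homomorphisms preserve nilpotence), hence $\psi_{\Garr}(\nu^*(z))$ is nilpotent, hence $\nu^*(z)$ is nilpotent because $\psi_{\Garr}$ (over $K[X^{\pm}]$) is an $F$-monomorphism. Since this holds for all such $\nu$ and all field extensions $K$, the SFB-detection property for $G$ yields that $z$ itself is nilpotent.

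The main obstacle I expect is making the naturality of $\psi$ precise enough to extract the identity $\psi_{\Garr}(\nu^*(z)) = \nu_\sharp(\psi_G(z))$, because $\psi$ was not defined as a functor-level natural transformation but rather via taking coefficients of a specific monomial ($\lambda_r^n$, resp. sums over $x_r^l y^k$) in the expansion of $u^*(z)$. So I would need to check that this coefficient-extraction operation commutes with the maps induced by a group homomorphism $\nu$: that is, unravel the universal element $u$ for $\Garr$ and for $G$, observe that composing $\nu$ with the universal $1$-parameter subgroup of $\Garr$ recovers (the base change of) $\nu$ viewed inside the universal family over $\Vrr(G)$, and then track how $u^*$ and the coefficient maps interact under pullback along $\nu$. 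A secondary technical point is handling the grading/graded-field bookkeeping (the $X^{\pm}$ twists), but since $K[X^{\pm}]$ is a graded field and all constructions are compatible with base change, this is routine once the untwisted naturality is established. I would also need the base-changed version of the $\Garr$ computation, noting that $\HH(\Garr \otimes K[X^{\pm}], K[X^{\pm}]) = \HH(\Garr, \kf) \otimes_\kf K[X^{\pm}]$ and likewise for $\kf[\Vrr(\Garr)]$, so that $\psi_{\Garr}$ over $K[X^{\pm}]$ remains an $F$-isomorphism.
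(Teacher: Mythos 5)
Your overall strategy --- reduce to the SFB-detection property by showing that nilpotence of $\psi(z)$ forces $\nu^*(z_A)$ to be nilpotent for every $\nu:\Garr\otimes A\to G\otimes A$ with $A=K[X^{\pm}]$ --- is exactly the skeleton of the paper's proof. The gap is in the middle step, where you pass from ``$\psi(z)$ nilpotent'' to ``$\nu^*(z_A)$ nilpotent''. The identity you write, $\psi_{\Garr}(\nu^*(z))=\nu_\sharp(\psi_G(z))$, does not typecheck: the right-hand side lies in $A$ (it is the evaluation of $\psi_G(z)$ at the $A$-point $\nu$ of $\Vrr(G)$), while the left-hand side would have to lie in $A[\Vrr(\Garr\otimes A)]$. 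On the only literal reading, what the universality of $u$ gives you is that $\nu_\sharp(\psi_G(z))$ equals the sum of the coefficients of the monomials $x_r^{l}y^{k}$ (with $2l+k=n$) in the reduced class $\overline{\nu^*(z_A)}=\sum a_{(\underline{i},j)}x_1^{i_1}\cdots x_r^{i_r}y^{j}$. Its vanishing kills only the coefficients $a_{(0,\dots,0,l),k}$ of the pure $x_r,y$ monomials; it says nothing about the coefficients involving $x_1,\dots,x_{r-1}$, so you cannot yet conclude that $\nu^*(z_A)$ is nilpotent.

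There are two ways to close this. One is the honest naturality square, with $\Vrr(\nu)^*:\kf[\Vrr(G)]\otimes A\to A[\Vrr(\Garr\otimes A)]$ in place of $\nu_\sharp$, together with an independent proof that $\psi$ for $\Garr$ over $A$ is an $F$-monomorphism; but the computation in \ref{expsi} that you invoke only pins down $\psi(x_i)$ up to a possibly zero scalar multiple of $(x^{r-i})^{p^i}$ ``by comparing bidegrees,'' so it does not by itself deliver the injectivity you need, and proving it amounts to the same work. The paper's route makes this explicit: it precomposes $\nu$ with the family of endomorphisms $\gamma_{(\underline{c},d)}$ of $\Garr\otimes A$ (built through an elementary gr-group scheme $E$), for which $\gamma^*_{(\underline{c},d)}(x_i)=c_1^{p^{i-1}}x_i+\cdots+c_{r-i+1}^{p^{i-1}}x_r$ and $\gamma^*_{(\underline{c},d)}(y)=dy$. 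Applying the vanishing of the $x_r^{l}y^{k}$-coefficients to each $\nu'=\nu\circ\gamma_{(\underline{c},d)}$ yields a polynomial in the parameters $(\underline{c},d)$ that vanishes for all values, and the Nullstellensatz (over $K$ algebraically closed, assumed without loss of generality) then forces every $a_{(\underline{i},j)}$ to vanish, hence $\overline{\nu^*(z_A)}=0$. This twisting-plus-Nullstellensatz step is the real content of the lemma and is absent from your proposal; your (correct) instinct that the naturality of $\psi$ is the delicate point is exactly where it hides.
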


\begin{proof}
Our proof is based on that in \cite[5.1]{SFB2}. Let $z \in H^{n,m}(G, \kf)$ with $\psi(z)$ nilpotent. Since $G$ has the SFB-property, it is enough to prove that, for all field extensions $K$ of $\kf$ and every gr-group scheme homomorphism $\nu:\Garr \otimes A \to G \otimes A$, $\nu^*(z_A) \in H^{n,m}(\Garr\otimes A, A)$ is nilpotent, where $A = K[X^{\pm}]$. Without loss of generality, let $K$ be algebraically closed. 
 Let $\overline{\nu^*(z_A)}  \in A[x_1, \ldots, x_r, y]^{gr}$ be the element in $H^{n,m}(\Garr \otimes A,A)_{red}$ (the reduce cohomology) corresponding to $\nu^*(z_A) \in H^{n,m}(\Garr \otimes A,A)$. Then $\overline{\nu^*(z_A)}$ equals
$$\sum_{\substack{2(i_1+ \cdots + i_r)+ j = n \\\ |t|(pi_1+p^2i_2 + \cdots + p^ri_r) +|s|j = m}} a_{(\underline{i}, j)}x_1^{i_1}x_2^{i_2} \cdots x_r^{i_r}y^j,$$

where $(\underline{i}, j) = (i_1, \ldots, i_r, j)$. We will show that $\overline{\nu^*(z_A)}  = 0$ and since $G$ has the SFB-detection property, it will follow that $z$ is nilpotent. 

Let $(\underline{c}, d) = (c_1, \ldots, c_r, d)$ be an $r+1$-tuple  in $A$, where $|c_i| = |t|(1-p^{i-1})$ and $|d| = 0$. Note that $|c_i|$ depends only on $i$ and not on the choice of $c_i$. We can write $c_i = \tilde{c}_iX^{|c_i|}$ and $(\tilde{c}, d) = (\tilde{c}_1, \ldots, \tilde{c}_r, d)$ an $r+1$-tuple in $K$. 

Let $E$ be the elementary gr-group scheme over $A$ with coordinate ring 
$$A[E] = A[t_1, \ldots, t_r, s]^{gr}/(t_1^{p^r}, \ldots, t_r^{p^r}),$$
 where $|t_i| = |t|$ and $s$ is graded as in $\Garr$.   Let $\gamma_{(\underline{c}, d)}:\Garr \otimes A \to \Garr \otimes A$ denote the composition 

$$\xymatrixcolsep{2pc}\xymatrix{\Garr \otimes A \ar[rr]^-{(\Delta^{ev})^r \times \Delta^{od}} & \hspace{.7cm}  &E \otimes A \ar[r]^{F_{(\underline{c}, d)}} &  E \otimes A \ar[r]^<<<<<m  & \Garr \otimes A},$$

where
\begin{itemize}
\item  $m^*: A[\Garr] \to A[E]$ is given by $t \mapsto \Delta^{r-1}(t) \otimes 1$, $s \mapsto 1 \otimes \cdots \otimes 1 \otimes s$, followed by the map $(t \mapsto t_1) \otimes \cdots \otimes (t \mapsto t_r) \otimes (s \mapsto s)$ and followed by multiplication. 
\item $F_{(\underline{c}, d)}^*: A[E] \to A[E]$ is given by $t_i \mapsto c_it_i^{p^{i-1}}, s \mapsto ds$, and finally
\item $((\Delta^{ev})^r \times \Delta^{od})^*: A[E] \to A[\Garr]$ is given by $t_i \mapsto t$, and $s \mapsto s$. 
\end{itemize}

It follows that on cohomology
$$\gamma^*_{(\underline{c}, d)}(x_i) = c_1^{p^{i-1}}x_i + c_2^{p^{i-1}}x_{i+1} + \ldots + c_{r-i+1}^{p^{i-1}}x_{r},\mbox{ and } \gamma^*_{(\underline{c}, d)}(y) = dy.$$

Recall from \ref{HG-KV} that $u:\Garr \otimes \kf[\Vrr(G)] \to G \otimes \kf[\Vrr(G)]$ is defined to be the identity map $1 \in \Vrr(G)(\kf[\Vrr(G)])$. The universality of $u$ gives that for any graded commutative $\kf$-algebra $A$ and any $\omega: \Garr \otimes A \to G \otimes A$ the following diagram commutes.

$$\xymatrix{\HH(G,\kf) \ar[d] \ar@/_10pc/[dddd]_{u^*} \ar[r]^{=} & \HH(G, \kf) \ar[d] \ar@/^10pc/[dddd]^{\omega^*} \\
\HH(G, \kf) \otimes \kf[\Vrr(G)] \ar[d]^{=} & \HH(G,\kf) \otimes A \ar[d]^{=} \\ 
\HH(G \otimes \kf[\Vr^*(G)]) \ar[d] & \HH(G \otimes A, A) \ar[d] \\
\HH(\Garr \otimes \kf[\Vr^*(G)], \kf[\Vr^*(G)]) \ar[d]^= & \HH(\Garr \otimes A, A) \ar[d]^= \\
\HH(\Garr, \kf) \otimes \kf[\Vrr(G)] \ar[d] & \HH(\Garr, \kf) \otimes A \ar[d]\\
\kf[\Vrr(G)] \ar[r] & A
}$$

Now $\psi(z)$ is given by following $z \in \HH(G,\kf)$ through the left side of the diagram and then looking at the sum of all the coefficients for $x_r^ly^k$ such that $n = 2l+k$ in $\kf[\Vrr(G)]$.

Let $\nu' = \nu \circ \gamma_{(\underline{c}, d)}: \Garr \otimes A \to G \otimes A$, then $\overline{{\nu'}^*(z_A)} \in H^{n, m}(\Garr \otimes A, A)_{red}$ equals
$$\sum_{\substack{2(i_1+ \cdots + i_r)+ j = n \\\ |t|(pi_1+p^2i_2 + \cdots + p^ri_r) +|s|j = m}} a_{(\underline{i},j)}(c_1x_1+ \cdots + c_rx_r)^{i_1} \cdots (c_1^{p^{r-1}}x_r)^{i_r}(dy)^j.$$

If $\psi(z)$ is nilpotent then it follows from the diagram, in the case of $\omega= \nu'$, that the sum of all the coefficients of $x_r^ly^k$ such that $n = 2l+k$ for $\overline{{\nu'}^*(z_A)}$  is zero. This sum is equal to 
\begin{equation*}
\sum_{\substack{2(i_1+ \cdots + i_r)+ j = n \\\ |t|(pi_1+p^2i_2 + \cdots + p^ri_r) +|s|j = m}} a_{(\underline{i}, j)}(c_r)^{i_1}(c_{r-1}^p)^{i_2} \cdots (c_1^{p^{r-1}})^{i_r}d^j.
\end{equation*}

We can rewrite this equation as 
\begin{equation*}
\sum_{\substack{2(i_1+ \cdots + i_r)+ j = n \\\ |t|(pi_1+p^2i_2 + \cdots + p^ri_r) +|s|j = m}} \tilde{a}_{(\underline{i}, j), l}(\tilde{c}_r)^{i_1}(\tilde{c}_{r-1}^p)^{i_2} \cdots (\tilde{c}_1^{p^{r-1}})^{i_r}d^j X^{q(\underline{i}, l)},
\end{equation*}

where $a_{(\underline{i}, j)} = \sum_l \tilde{a}_{(\underline{i}, j), l} X^l$ and  $q(\underline{i}, l) = {i_1|c_r|+pi_2|c_{r-1}| + \cdots + p^{r-1}i_r|c_1| + l}$. 

Let 
$$f_{\underline{i}, l}(\underline{c}, d) = \sum_{\substack{2(i_1+ \cdots + i_r)+ j = n \\\ |t|(pi_1+p^2i_2 + \cdots + p^ri_r) +|s|j = m}} \tilde{a}_{(\underline{i}, j), l}(\tilde{c}_r)^{i_1}(\tilde{c}_{r-1}^p)^{i_2} \cdots (\tilde{c}_1^{p^{r-1}})^{i_r}d^j,$$

then each homogeneous term $f_{\underline{i},l}(\underline{c},d) = 0$ for any choice of $r+1$-tuple $(\underline{c}, d) \in K$. Hence by the Nullstellensatz $f_{\underline{i}, j} \equiv 0$ as polynomial with coefficients in $K$ and $\tilde{a}_{(\underline{i},j),l} = 0$. Therefore,  $\overline{\nu^*(z_A)}  = 0$ and since $G$ has the SFB-detection property, it follows that $z$ is nilpotent as desired.
\end{proof}

Our main theorem of this chapter is now a direct consequence of \ref{detec}. We also get some corollaries from it. 

\begin{theorem}\label{F-mono}
Let $G$ be a finite gr-group variety of height $\leq r$. If $G$ has the SFB-detection property, then $\psi: \HH(G, \kf) \to \kf[\Vrr(G)]$ is an $F$-monomorphism, that is, its kernel consists of nilpotent elements.
\end{theorem}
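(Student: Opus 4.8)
The plan is to read Theorem~\ref{F-mono} off directly from Lemma~\ref{detec}. By definition, saying that $\psi$ is an $F$-monomorphism means exactly that $\ker\psi \subseteq \nil(\HH(G,\kf))$, so the only thing that must be produced is: whenever $\psi(z)=0$, the class $z$ is nilpotent in $\HH(G,\kf)$.

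First I would reduce to bihomogeneous $z$. By Theorem~\ref{HG-KV}, $\psi$ multiplies the cohomological degree by the fixed positive integer $p^r$ (respectively $2^{r-1}$ when $p=2$), and cohomological degrees are nonnegative; hence $\ker\psi$ is a homogeneous ideal for the cohomological grading, and any $z\in\ker\psi$ is a finite sum of cohomologically-homogeneous classes, each again in $\ker\psi$. Since the set of nilpotent elements of the graded-commutative ring $\HH(G,\kf)$ is an ideal, it is enough to show that every $z\in H^{n,*}(G,\kf)$ with $\psi(z)=0$ is nilpotent; when $p=2$ the degree formula of Theorem~\ref{HG-KV} is moreover injective in the internal degree, so one may further assume $z\in H^{n,m}(G,\kf)$, which is precisely the setting of Lemma~\ref{detec}.

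Now for such a $z$ with $\psi(z)=0$, the element $\psi(z)\in\kf[\Vrr(G)]$ is trivially nilpotent, and $G$ is by hypothesis a finite gr-group variety of height $\le r$ with the SFB-detection property; so Lemma~\ref{detec} applies and gives that $z$ is nilpotent. Combined with the reduction above this yields $\ker\psi\subseteq\nil(\HH(G,\kf))$, which is the assertion.

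I expect the substantive content to be entirely absorbed by Lemma~\ref{detec} (itself modeled on \cite[5.1]{SFB2}): propagating the hypothesis ``$\psi(z)$ nilpotent'' through the universal $1$-parameter subgroup $u$ of Theorem~\ref{HG-KV} and the reparametrizations $\gamma_{(\underline{c},d)}$ to force the vanishing of every coefficient polynomial $f_{\underline{i},l}(\underline{c},d)$, applying the Nullstellensatz, and then invoking the SFB-detection property. Consequently the present proof is essentially formal; the one place that still needs a moment of care is the bigraded bookkeeping in the reduction when $p>2$, since there $\psi$ spreads a single bihomogeneous class over several bidegrees. This is handled by arguing one cohomological degree at a time: the computation in Lemma~\ref{detec} only uses homogeneity of $z$ in the cohomological degree together with the explicit shape of $\overline{\nu^*(z_A)}$ — the internal-degree information being carried entirely by the powers of $X$ in $A=K[X^{\pm}]$ — so no separate splitting by internal degree is required, and the theorem follows.
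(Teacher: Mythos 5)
Your proof is correct and follows essentially the same route as the paper, which simply declares Theorem~\ref{F-mono} to be a direct consequence of Lemma~\ref{detec}. The only difference is that you spell out the reduction to bihomogeneous classes (using that $\psi$ multiplies cohomological degree by a fixed positive integer, so $\ker\psi$ splits along that grading), a bookkeeping step the paper leaves implicit; this is a reasonable addition and does not change the substance of the argument.
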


\begin{corollary}
Let $G$ be an evenly gr-group variety of height $\leq r$, then $\psi: \HH(G, \kf) \to \kf[\Vrr(G)]$ is an $F$-monomorphism.\end{corollary}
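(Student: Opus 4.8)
The plan is to obtain this as an immediate consequence of the main theorem \ref{F-mono} together with the comparison of detection properties for evenly graded schemes. By \ref{F-mono} it is enough to check two things: that $G$ is a \emph{finite} gr-group variety of height $\leq r$, and that it satisfies the SFB-detection property (\ref{SFB}).

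First I would verify that an evenly gr-group variety $G$ of height $\leq r$ is automatically finite. Writing $A = \kf[G]$, the augmentation ideal $I_\e = \ker(\e)$ consists of $p^r$-nilpotent elements by the height hypothesis; since $A/I_\e \cong \kf$ is a field and $I_\e$ is nil, $A$ is local with maximal ideal $I_\e$. As $A$ is a finitely generated graded commutative algebra, pick homogeneous algebra generators $x_1, \dots, x_n$ and replace each by $x_i - \e(x_i)$, so that all of them lie in $I_\e$ and hence satisfy $x_i^{p^r} = 0$; these still generate $A$ as an algebra and generate $I_\e$ as an ideal. A pigeonhole count on monomials then gives $I_\e^{\,np^r} = 0$, so $A$ is finite dimensional over $\kf$. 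Consequently $G$, viewed as an ungraded group scheme, is finite and connected, i.e. an infinitesimal group scheme of height $\leq r$.

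Next I would invoke \ref{uSFB-SFB}: since $G$ is an evenly gr-group scheme, the ungraded SFB-detection property of Theorem \ref{uSFB} — which applies precisely because $G$ is infinitesimal of height $\leq r$ by the previous step — implies that $G$ has the SFB-detection property. (Internally this rests on the identifications $H^{*}(G,\kf) = \HH(G,\kf)$ after forgetting the internal grading, and $\Vr(G)(K) = \Vrr(G)(K[X^{\pm}])$ from \ref{homs} and \ref{vr-vrr}.) Both hypotheses of \ref{F-mono} are now satisfied, and we conclude that $\psi : \HH(G,\kf) \to \kf[\Vrr(G)]$ is an $F$-monomorphism.

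There is no genuinely hard step here: the substance already lives in Lemma \ref{detec} (the Nullstellensatz argument upgrading the SFB-detection property of $G$ to detection of nilpotence by $\psi$) and in \ref{uSFB-SFB}. The only point requiring a little care is the bookkeeping of the second paragraph — confirming that the hypothesis "evenly gr-group variety of height $\leq r$" already forces finiteness and connectedness, so that both \ref{F-mono} and \ref{uSFB} are legitimately available.
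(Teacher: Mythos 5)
Your proof is correct and follows the same route the paper intends: the corollary is stated as an immediate consequence of Theorem \ref{F-mono} combined with Proposition \ref{uSFB-SFB}, which supplies the SFB-detection property for evenly graded group schemes via the ungraded result \ref{uSFB}. Your explicit verification that an evenly gr-group variety of height $\leq r$ is automatically finite and connected (so that both \ref{F-mono} and \ref{uSFB} legitimately apply) is a hypothesis check the paper leaves implicit, and it is a worthwhile addition.
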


\begin{corollary}
Let $G$ be an elementary gr-group scheme of  height $\leq r$, then $\psi: \HH(G, \kf) \to \kf[\Vrr(G)]$ is an $F$-monomorphism.\end{corollary}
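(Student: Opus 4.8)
The plan is to obtain this as a direct consequence of Theorem~\ref{F-mono}. Concretely, I would verify that an elementary gr-group scheme $G$ of height $\leq r$ satisfies the two hypotheses of that theorem: that it is a finite gr-group variety, and that it has the SFB-detection property (Definition~\ref{SFB}). Once both are in hand, Theorem~\ref{F-mono} immediately yields that $\psi\colon \HH(G,\kf)\to\kf[\Vrr(G)]$ is an $F$-monomorphism.

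For the first hypothesis, I would recall that $\kf[G]$ is a finite tensor product of graded Hopf algebras of the shape $\kf[t]^{gr}/(t^{p^n})$ — together with, when $p>2$, factors $\kf[s]^{gr}$ with $|s|$ odd, so that $s^2=0$. Each such factor is finite dimensional, hence $\kf[G]$ is finite dimensional and $G$ is a finite gr-group scheme. Moreover $\kf[G]$ is positively graded and of finite type, and its degree-zero part is the tensor product of the degree-zero parts of the factors whose generator lies in degree $0$, each of which is a local Artinian $\kf$-algebra with residue field $\kf$; so $A_0=\kf[G]_0$ is local, and $G$ is a finite gr-group variety of height $\leq r$.

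For the second hypothesis, I would invoke the earlier proposition showing that every finite gr-group scheme with the W-detection property has the SFB-detection property, and in particular that elementary gr-group schemes have the SFB-detection property. The proof there is explicit: writing $\kf[G]$ as $\kf[t_1,\dots,t_n,s_1,\dots,s_m]^{gr}/(t_1^{p^{r_1}},\dots,t_n^{p^{r_n}})$ and working over $A=K[X^{\pm}]$, one builds $\nu_E\colon \Garr\otimes A\to G\otimes A$ by $t_i\mapsto t^{p^{r-r_i}}X^{k_i}$, $s_i\mapsto sX^{m_i}$, and checks, using the description of $\HH(\Garr,\kf)$ in Proposition~\ref{HGar}, that $\nu_E^*$ carries the polynomial generators of $\HH(G,\kf)$ to nonzero monomials (up to the units $X^{\ast}$), hence sends nonnilpotent classes to nonnilpotent classes. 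Combined with Lemma~\ref{detec}, this gives the SFB-detection hypothesis required by Theorem~\ref{F-mono}.

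I do not expect a serious obstacle: the corollary is essentially a bookkeeping assembly of results already in place. The one point deserving attention is confirming the gr-group-variety hypothesis — that $\kf[G]$ is positively graded with local degree-zero part — which is why I would spell out that the defining generators of an elementary coordinate ring may be taken in nonnegative degrees and that the resulting degree-zero part is a product of local Artinian pieces. After that, the proof is just: $G$ is a finite gr-group variety of height $\leq r$ with the SFB-detection property, so $\psi$ is an $F$-monomorphism by Theorem~\ref{F-mono}.
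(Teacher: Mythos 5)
Your proposal is correct and matches the paper's intent exactly: the corollary is stated as a direct consequence of Theorem~\ref{F-mono}, using the earlier proposition that elementary gr-group schemes have the SFB-detection property, and your verification that $\kf[G]$ is finite dimensional with local degree-zero part fills in the (unstated) gr-group-variety hypothesis. The paper gives no separate proof, so your bookkeeping is, if anything, slightly more careful than the original.
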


\appendix 
\section{Cohomology of graded algebras}\label{cohoap}

For reference and completeness we provide some result and definitions regarding the cohomology of gr-group schemes. Let $R$ be a graded ring.

\begin{definition}
 Let $M$ and $N$ be graded $R$-modules, a map $f: M \to N$ is a \textit{graded map of degree $n$} if $f(M_i) \subset N_{i+n}$ for all $i$. 
\end{definition}

\begin{definition}\label{sus}
 The $i$-\textit{suspension} of a graded $R$-module $M$ is defined to be the graded module $M(i)$ given by $M$ so that $M(i)_j = M_{i+j}$. 
\end{definition}

\begin{definition}\label{uHom}
Let $M$ and $N$ be graded $R$-modules. Let $\uHom_R(M,N)_n$ denote the subgroup of $\Hom_R(M, N)$ consisting of maps of degree $n$. We define $\uHom_R(M,N) = \oplus_i \uHom_R(M,N)_i$. 
\end{definition}

\begin{definition}
Since $\uHom_R(\_,\_)$ is left exact we can defined the right derived functor denoted by $\uExt_R^n(\_,\_)$.
\end{definition}

\begin{proposition}(\cite[2.4.4]{NV})
 If $M, N$ are graded modules over $R$ and $M$ is finitely generated then $$\uHom_R(M, N) = \Hom_R(M,N).$$
\end{proposition}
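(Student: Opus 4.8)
The plan is to establish the reverse of the obvious inclusion $\uHom_R(M,N) \subseteq \Hom_R(M,N)$: one must show that every $R$-linear map $f\colon M \to N$ is a \emph{finite} sum $f = \sum_i f_i$ of graded maps $f_i$ of degree $i$, so that $f \in \bigoplus_i \uHom_R(M,N)_i = \uHom_R(M,N)$. First I would reduce to the case of a free graded module of finite rank. Choosing homogeneous generators $x_1,\dots,x_k$ of $M$ of degrees $d_1,\dots,d_k$ produces a graded surjection $\pi\colon F:=\bigoplus_{j=1}^k R(-d_j)\twoheadrightarrow M$ with graded kernel $K$. For $F$ itself the claim is a direct computation: both $\Hom_R(-,N)$ and $\uHom_R(-,N)$ carry the finite direct sum to a finite product indexed by $j$, with $j$-th factor $\Hom_R(R(-d_j),N)\cong N\cong\uHom_R(R(-d_j),N)$ as abelian groups, and because the index set is finite no infinite sums intervene, so $\uHom_R(F,N)=\Hom_R(F,N)$. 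Concretely, an ungraded $\tilde f\colon F\to N$ decomposes by sending each basis vector $e_j$ (of degree $d_j$) to the sum of its homogeneous components $\tilde f(e_j)=\sum_i (\tilde f(e_j))_{d_j+i}$; for fixed $i$ the rule $e_j\mapsto(\tilde f(e_j))_{d_j+i}$ defines a graded map $\tilde f_i$ of degree $i$, and only finitely many $\tilde f_i$ are nonzero.

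Next I would descend along $\pi$. Given $f\colon M\to N$, set $\tilde f=f\circ\pi$ and write $\tilde f=\sum_i\tilde f_i$ as above. The point is that each $\tilde f_i$ kills $K$: for a homogeneous $x\in K$ of degree $d$ we have $0=\tilde f(x)=\sum_i\tilde f_i(x)$ with the summands $\tilde f_i(x)\in N_{d+i}$ sitting in pairwise distinct degrees, which forces $\tilde f_i(x)=0$; since $K$ is a graded submodule it is generated by its homogeneous elements, so $\tilde f_i|_K=0$ and $\tilde f_i$ factors through $\pi$ as a degree-$i$ map $f_i\colon M\to N$. Then $f=\sum_i f_i$ exhibits $f$ as a finite sum of homogeneous maps, which gives $\Hom_R(M,N)\subseteq\uHom_R(M,N)$ and hence the asserted equality.

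The only subtlety — and the place where both hypotheses are genuinely used — is the well-definedness of the homogeneous components $f_i$ on $M$ rather than merely on the free cover $F$; this is exactly what the gradedness of the relation module $K$ guarantees, while finite generation of $M$ is what forces the decomposition $f=\sum_i f_i$ to be a \emph{finite} sum, so that it lands in the direct sum $\bigoplus_i\uHom_R(M,N)_i$ and not just in the product. Everything else is routine bookkeeping with the internal grading, and no further input is needed.
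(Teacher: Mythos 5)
Your argument is correct, and it is the standard proof of this fact: decompose $f$ into homogeneous components on a finite free cover, use gradedness of the relation module to descend each component to $M$, and use finite generation to see that only finitely many components survive, so that $f$ lands in the direct sum $\bigoplus_i \uHom_R(M,N)_i$ rather than merely the product. Note that the paper itself supplies no proof here --- it simply cites \cite[2.4.4]{NV} --- so there is nothing to compare against; your write-up fills that gap correctly, and you correctly isolate where each hypothesis is used.
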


\begin{definition}
 A graded module $M$ is \textit{left gr-Noetherian} if $M$ satisfies the ascending chain condition for graded left $R$-submodules. 
\end{definition}

\begin{proposition}[P. Samuel] A $\Z$-graded ring $R = \oplus_{i \in \Z} R_i$, is gr-Noetherian if and only if $R_0$ is a Noetherian ring and $R$ 
is finitely generated as an $R_0$ algebra. 
\end{proposition}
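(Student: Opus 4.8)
The plan is to prove both implications; the substance lies in the converse. For sufficiency, if $R_0$ is Noetherian and $R$ is generated over $R_0$ by finitely many homogeneous elements, then $R$ is a quotient of a polynomial ring $R_0[X_1,\dots,X_m]$, which is Noetherian by the Hilbert basis theorem; hence $R$ is Noetherian, and in particular gr-Noetherian, since every graded ideal is an ideal. For necessity, assume $R$ is gr-Noetherian. First I would check that $R_0$ is Noetherian: an ascending chain of ideals $J_1\subseteq J_2\subseteq\cdots$ in $R_0$ extends to graded ideals $J_1R\subseteq J_2R\subseteq\cdots$, which stabilizes, and since $J_i\subseteq R_0$ one has $(J_iR)_0=J_i$, so taking degree-$0$ parts recovers the original chain, which therefore stabilizes too. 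Next I would check that every homogeneous component $R_n$ is a finitely generated $R_0$-module: for an $R_0$-submodule $W\subseteq R_n$, the graded ideal $RW$ of $R$ satisfies $(RW)_n=R_0W=W$ (the only homogeneous products landing in degree $n$ multiply $W$ by $R_0$), so an ascending chain of $R_0$-submodules of $R_n$ gives a stabilizing chain of graded ideals of $R$; thus $R_n$ is a Noetherian $R_0$-module and hence finitely generated over $R_0$.

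The crux is to show that $R$ is generated as an $R_0$-algebra by finitely many of the $R_n$. Let $I^{+}$, respectively $I^{-}$, be the ideal of $R$ generated by all homogeneous elements of positive, respectively negative, degree. Each is a finitely generated graded ideal, and since it is generated by elements of positive (resp. negative) degree, one may take $I^{+}=(g_1,\dots,g_u)$ with $\deg g_k=e_k>0$ and $I^{-}=(h_1,\dots,h_v)$ with $\deg h_l=-f_l<0$. Put $E=\max_k e_k$ and $F=\max_l f_l$, and let $D$ be the $R_0$-subalgebra of $R$ generated by the $g_k$, the $h_l$, and the finitely many $R_0$-modules $R_m$ with $1-E\le m\le F-1$ (each finitely generated over $R_0$ by the previous paragraph). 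Because $R_n\subseteq I^{+}$ for $n>0$ and $R_n\subseteq I^{-}$ for $n<0$, one has $R_n=\sum_k R_{n-e_k}g_k$ when $n>0$ and $R_n=\sum_l R_{n+f_l}h_l$ when $n<0$. An induction on $|n|$ then shows $R_n\subseteq D$ for all $n$: in each reduction step the new degree either has strictly smaller absolute value and the same sign, or lies in the window $[1-E,F-1]$, where the corresponding $R_m$ is already a generator of $D$. Hence $D=R$, and combining with the previous paragraph, $R$ is a finitely generated $R_0$-algebra.

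I expect this last step to be the main obstacle, and it is the only place where the $\Z$-grading costs more than an $\N$-grading would: in the $\N$-graded case one uses only $I^{+}$ and a plain induction on degree, whereas here the presence of both $I^{+}$ and $I^{-}$ forces a descent alternating between positive and negative degrees, and the real content is choosing the finite window $[1-E,F-1]$ so that this descent terminates. The verifications $(J_iR)_0=J_i$ and $(RW)_n=W$, and the bookkeeping that the recursion never leaves the window together with the integers of smaller absolute value, are routine.
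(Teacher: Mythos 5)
The paper does not actually prove this proposition: it is stated as a named result of P.~Samuel and quoted without argument (the surrounding appendix results are likewise cited from N\u{a}st\u{a}sescu--Van Oystaeyen), so there is no in-paper proof to compare yours against. On its own merits your argument is correct and complete, and it is the standard proof for the $\Z$-graded case: the contraction $(J_iR)_0=J_i$ gives Noetherianity of $R_0$; the identity $(RW)_n=R_0W=W$ gives finite generation of each $R_n$ over $R_0$; and the two ideals $I^{\pm}$, generated by finitely many \emph{homogeneous elements of the original generating sets} (each finitely many homogeneous generators being an $R$-combination of finitely many elements of positive, resp.\ negative, degree), drive the descent. I checked the window bookkeeping: for $n>0$ one has $n-E\le n-e_k\le n-1$, so a negative landing degree lies in $[1-E,-1]$, and symmetrically for $n<0$, so the recursion does terminate with the finitely many $R_m$, $1-E\le m\le F-1$, adjoined to $D$; the degenerate cases $I^{+}=0$ or $I^{-}=0$ are trivial since then the corresponding $R_n$ vanish. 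One small caveat worth recording: the rings in this paper are graded commutative rather than commutative, so in the sufficiency direction the surjection should come from the graded polynomial ring $R_0[X_1,\ldots,X_m]^{gr}$; this is still Noetherian, being module-finite over the ordinary polynomial subring on the even variables (odd variables square to zero for $p>2$, and for $p=2$ graded commutative means commutative), so your conclusion stands unchanged.
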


\begin{proposition}(From \cite[2.4.7]{NV})\label{Ext}
 If $R$ is left gr-Noetherian and $M$ is finitely generated then, for every $n \geq 0$ $$\uExt_R^n(M,N) = \Ext^n_R(M,N).$$
\end{proposition}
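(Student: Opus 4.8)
The plan is to compute both $\uExt^n_R(M,N)$ and $\Ext^n_R(M,N)$ from one and the same resolution of $M$ — a resolution by \emph{finitely generated} graded projective $R$-modules — and then appeal, term by term, to the already-quoted identity $\uHom_R(P,N) = \Hom_R(P,N)$ for finitely generated $P$ (\cite[2.4.4]{NV}).

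First I would build the resolution. Since $M$ is finitely generated and $R$ is left gr-Noetherian, choose finitely many homogeneous generators $m_1,\dots,m_k$ of $M$ and form the graded free module $P_0 = \bigoplus_{j=1}^{k} R(d_j)$ (for suitable shifts $d_j$) with the obvious graded surjection $P_0 \twoheadrightarrow M$. Its kernel is a graded submodule of the gr-Noetherian module $P_0$, hence again finitely generated; iterating produces a resolution $\cdots \to P_1 \to P_0 \to M \to 0$ in which every $P_i$ is a finitely generated graded free module. Two observations: (i) each $P_i$ is graded projective, so $P_\bullet$ computes $\uExt$; and (ii) after forgetting the grading each $P_i$ is a finitely generated free, hence projective, $R$-module, and forgetting the grading does not disturb exactness, so $P_\bullet \to M$ is equally an (ungraded) projective resolution and computes $\Ext$.

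Next I would apply the two Hom functors. Because the category of graded $R$-modules has enough projectives (graded frees) as well as enough injectives, $\uExt$ is balanced, so $\uExt^n_R(M,N) \cong H^n\big(\uHom_R(P_\bullet, N)\big)$; likewise $\Ext^n_R(M,N) \cong H^n\big(\Hom_R(P_\bullet, N)\big)$. For each $i$, since $P_i$ is finitely generated, \cite[2.4.4]{NV} gives $\uHom_R(P_i,N) = \Hom_R(P_i,N)$, and these identifications commute with the cochain differentials, since both differentials are induced by the fixed maps $P_{i}\to P_{i-1}$. Hence the two cochain complexes are literally equal, and therefore so are their cohomology groups in every degree $n\geq 0$.

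The one point requiring care — and the step I would single out as the potential obstacle — is the compatibility of graded and ungraded $\Ext$ with this single resolution. It is essential to compute via a projective resolution of $M$ rather than an injective resolution of $N$: a graded-injective $R$-module need not remain injective after forgetting the grading, so the injective route does not obviously yield the comparison, whereas a finitely generated graded-projective module is genuinely projective ungraded. Invoking the balancing of $\uExt$ (legitimate since the graded module category is a Grothendieck category with enough projectives) circumvents this, and the finite generation of every $P_i$ — guaranteed precisely by the gr-Noetherian hypothesis — is exactly what makes \cite[2.4.4]{NV} applicable at each stage; without that hypothesis the identity $\uHom = \Hom$ can fail.
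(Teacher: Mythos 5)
Your proof is correct. The paper gives no proof of this proposition at all — it is simply quoted from \cite[2.4.7]{NV} — and your argument (resolve $M$ by finitely generated graded free modules, which the gr-Noetherian hypothesis makes possible since kernels of maps between such modules stay finitely generated; observe that this single resolution is simultaneously a graded and an ungraded projective resolution; then apply the termwise identity $\uHom_R(P_i,N)=\Hom_R(P_i,N)$ of \cite[2.4.4]{NV} and note it commutes with the differentials) is exactly the standard argument underlying the cited source. The one inessential detour is the appeal to balancing of $\uExt$: as the paper's appendix makes explicit, $\uExt^n_R(\_,N)$ is computed there directly as the derived functor of $\uHom_R(\_,N)$ via a graded projective resolution of the first variable, so no comparison with injective resolutions of $N$ is needed — though your caution that gr-injective modules need not remain injective after forgetting the grading is accurate and consistent with the paper's own remark on that point.
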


\begin{definition}
 A graded $R$-module $P$ is \textit{gr-projective} if it is projective as an object in the category of graded $R$-modules. That is, if $P$ satisfies
the universal lifting property of projective modules, where the maps are graded maps between graded modules. It can be shown (with the usual proofs following
verbatim), that $P$ is gr-projective if and only if $P$ is a direct summand of a graded free module. 
\end{definition}

\begin{proposition}
 A graded module $P$ is gr-projective if and only if $P$ is graded and projective. 
\end{proposition}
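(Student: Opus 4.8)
The plan is to prove both implications, relying on the characterization recorded just before the statement (itself obtained ``verbatim'' from the ungraded case): a graded $R$-module is gr-projective if and only if it is a direct summand, in the category of graded modules, of a graded free module. The forward implication is then immediate. If $P$ is gr-projective it is a graded direct summand of a graded free module $F$; forgetting the grading turns $F$ into an ordinary free, hence projective, $R$-module, and a direct summand of a projective module is projective. Since $P$ is graded by assumption, there is nothing further to check.

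All the content sits in the converse. Assume $P$ is graded and projective in the ungraded sense. Since $P=\bigoplus_i P_i$, it is generated over $R$ by homogeneous elements; choose homogeneous generators $\{x_\alpha\}$ with $|x_\alpha|=d_\alpha$ and set $F=\bigoplus_\alpha R(-d_\alpha)$, with suspensions as in Definition~\ref{sus}, so that the $\alpha$-th free generator $e_\alpha$ sits in degree $d_\alpha$. Then $e_\alpha\mapsto x_\alpha$ defines a surjection $\pi\colon F\twoheadrightarrow P$ which is a graded map of degree $0$. Ungraded projectivity of $P$ supplies an $R$-linear section $s\colon P\to F$ with $\pi\circ s=\mathrm{id}_P$, which will typically fail to be graded. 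The one step that needs an idea is to repair this: define $\tilde s$ by sending a homogeneous $x\in P_d$ to the degree-$d$ component $s(x)_d$ of $s(x)=\sum_e s(x)_e$ (with $s(x)_e\in F_e$), and extend additively. One checks routinely that $\tilde s$ is graded of degree $0$ by construction; that it is $R$-linear, since for homogeneous $r\in R_c$, $x\in P_d$ we get $\tilde s(rx)=(r\,s(x))_{c+d}=r\,(s(x))_d=r\,\tilde s(x)$; and that $\pi\circ\tilde s=\mathrm{id}_P$, because $\pi$ has degree $0$, so $\pi(\tilde s(x))=(\pi s(x))_d=x_d=x$ for $x\in P_d$. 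Hence $\tilde s$ splits $\pi$ in the graded category, realizing $P$ as a graded direct summand of the graded free module $F$, i.e.\ $P$ is gr-projective.

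I do not expect any genuine obstacle here: this is the classical argument, and the only things to be careful about are the suspension bookkeeping that makes $\pi$ have degree $0$ and the (equally standard) fact that a graded module always admits a homogeneous generating set. The component-extraction trick for the section is precisely the device that upgrades ordinary projectivity to gr-projectivity, and it is the crux of this short proof.
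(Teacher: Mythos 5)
Your proof is correct and rests on the same key device as the paper's: repairing a non-graded $R$-linear map by sending each homogeneous element to the appropriate degree component of its image. The only cosmetic difference is that you apply this trick to split the canonical degree-zero surjection from a graded free module (invoking the direct-summand characterization of gr-projectivity), whereas the paper applies it directly to the lift $h$ in an arbitrary graded lifting problem; both arguments are sound and essentially identical in content.
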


\begin{proof}
 One direction is clear, if $P$ is gr-projective, then $P$ is a direct summand of a gr-free, forgetting the grading on $F$ it is then a direct
summand of a free module, hence it is projective and also graded. 

On the other hand if $P$ is a graded module which is also a projective module, then let $f: M \twoheadrightarrow N$ be a graded
surjective map and $g: P \to N$ a graded map, then since $P$ is projective there exists a map $h: P \to M$ (not necessarily graded) such that
the following diagram commutes. 

$$\xymatrix{& M \ar@{->>}[d]^f\\ P \ar@{-->}[ur]^{\exists h} \ar[r]^g & N}$$

We construct a graded map $\widetilde{h}: P \to M$ from $h$, such that the diagram will commute if we substitute $h$ with $\widetilde{h}$.
Consider the group homomorphisms $\pi_i:M \to M_i$ which are the projections of $M$ onto its degree $i$ part $M_i$
and define the group homomorphisms $\widetilde{h_i}: P_i \to M_i$ to be $\widetilde{h_i} = \pi_i \circ h |_{P_i}$. We then define
$\widetilde{h} = \oplus_i \widetilde{h_i}: P \to M$.

The map $\widetilde{h}$ is degree preserving. Let $p_i \in P_i$, then $\widetilde{h}(p_i) = \widetilde{h_i}(p_i) = \pi_i \circ(h(p_i)) $. If we write $h(p_i) = \sum_j m_j$, then $\pi_i(h(p_i)) = m_i$. 

The map $\widetilde{h}$ is in fact an $R$-module map, since $\widetilde{h}$ is defined as composition of group homomorphims $\widetilde{h}(p+q) = \widetilde{h}(p)+\widetilde{h}(q).$
Since $\widetilde{h}$ distributes with the sum, to check if elements of $R$ factor out it is enough to check on homogeneous elements. Let $r \in R_i$ and 
$p \in P_j$, then $rp \in P_{i+j}$ hence $\widetilde{h}(rp) = \pi_{i+j} \circ h(rp) = \pi_{i+j} (r h(p)) = r \pi_j(h(p)) = r \widetilde{h}(p)$. 

To show $g = f \circ \widetilde{h}$ we again assume $p \in P_i$, homogeneous, then if $h(p) = \sum_j m_j$, then
$f \circ \widetilde{h}(p) = f(\pi_i(h(p))) = f(m_i)$; on the other hand $f \circ h(p) = g(p)$ which gives that 
$f(h(p)) = f(\sum_j m_j) = \sum_j f(m_j) = f(m_i)$ since $g$ and $f$ are graded maps.  
\end{proof}

\begin{rk}(From \cite[2.2]{NV})
Note that the proposition above is not true if we replace gr-projective with gr-free as we may have a module which is free with respect to a non-homogeneous basis and not free with respect to a graded basis. For example let  $R = \Z \times \Z$ with trivial grading and 
$M$ the graded $R$ module where $M_0= \Z \times {0}$, $M_1 = {0} \times \Z$ and $M_i = 0$ for $i \neq 0,1$, then $M$ cannot have a homogeneous basis. Hence, gr-free is a stronger property than graded and free.
\end{rk}

\begin{proposition}
 If $Q$ in $R$-gr is injective when considered as an ungraded module, then $Q$ is gr-injective. 
\end{proposition}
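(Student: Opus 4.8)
The plan is to dualize, step for step, the argument given just above for gr-projective modules. Let $f\colon M\hookrightarrow N$ be a monomorphism of graded $R$-modules and $g\colon M\to Q$ a graded (i.e.\ degree-preserving) $R$-linear map; I must produce a graded $R$-linear $\widetilde h\colon N\to Q$ with $\widetilde h\circ f=g$. Since $Q$ is injective as an ungraded $R$-module and $f$ is injective on underlying modules, there is an ungraded $R$-linear map $h\colon N\to Q$ with $h\circ f=g$. All the content is then to \emph{homogenize} $h$ without destroying the relation $h\circ f=g$, playing the role that $\widetilde h=\bigoplus_i\pi_i\circ h|_{P_i}$ played in the gr-projective case.

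The key device is the family of degree-$d$ pieces of $h$. For each $d\in\Z$ I define $h^{(d)}\colon N\to Q$ by setting, for a homogeneous element $n\in N_j$,
$$h^{(d)}(n)=\bigl(h(n)\bigr)_{j+d}\in Q_{j+d},$$
that is, the degree-$(j+d)$ homogeneous component of $h(n)\in Q=\bigoplus_k Q_k$, and extending additively over $N=\bigoplus_j N_j$ (this is well defined since homogeneous decompositions in $Q$ are unique). Because each $h(n)$ has only finitely many nonzero homogeneous components, one gets $h=\sum_d h^{(d)}$ as a pointwise-finite sum, and by construction $h^{(d)}$ raises internal degree by exactly $d$. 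The step I expect to require the most care — indeed the only nonformal point — is checking that each $h^{(d)}$ is $R$-linear: for $r\in R_e$ and homogeneous $n\in N_j$ one has $h(rn)=r\,h(n)$ by $R$-linearity of $h$, and since multiplication by $r$ carries $Q_{j+d}$ into $Q_{j+d+e}$, the degree-$(j+d+e)$ component of $r\,h(n)$ equals $r$ times the degree-$(j+d)$ component of $h(n)$; this is exactly $h^{(d)}(rn)=r\,h^{(d)}(n)$, and $R$-linearity on inhomogeneous arguments follows by additivity.

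Finally I would take $\widetilde h:=h^{(0)}$, which is then a graded (degree-preserving) $R$-linear map $N\to Q$, and verify that it still lifts $g$. For homogeneous $m\in M_j$ we have $f(m)\in N_j$ and $g(m)\in Q_j$, while $g(m)=h(f(m))=\sum_d h^{(d)}(f(m))$ with $h^{(d)}(f(m))\in Q_{j+d}$; comparing degree-$j$ components gives $g(m)=h^{(0)}(f(m))=\widetilde h(f(m))$, and both sides being additive, $\widetilde h\circ f=g$ on all of $M$. Hence every graded map from a graded submodule into $Q$ extends to the ambient module, i.e.\ $Q$ is gr-injective. (There is also a more categorical route, observing that the forgetful functor $R\text{-gr}\to R\text{-mod}$ is exact and admits a right adjoint which therefore preserves injectives; but the direct homogenization above is shorter and parallels the gr-projective proof verbatim.)
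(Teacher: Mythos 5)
Your proof is correct and is exactly the argument the paper intends: the paper's own ``proof'' merely cites \cite[2.3.2]{NV} and says the argument dualizes the gr-projective case, and your homogenization of the ungraded extension $h$ into its degree-$d$ pieces $h^{(d)}$ (keeping $h^{(0)}$) is precisely that dualization, carried out in full. The degree-comparison step showing $h^{(0)}\circ f=g$ and the componentwise check of $R$-linearity are both sound, so there is nothing to add.
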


\begin{proof}
 From \cite[2.3.2]{NV}. The proof is very similar to the one above about projective module except that a gr-injective module, may not be 
injective as an ungraded module, so we do not get an if and only if statement.\end{proof}
We get the following corollary. 
\begin{corollary}
 Let $P$ be an $R$-module, then $P$ is a gr-projective if and only if $P$ is projective as an object in the category where the objects 
are graded modules and morphisms are $\uHom_R(\_, \_)$. 
\end{corollary}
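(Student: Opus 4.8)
\emph{Sketch of proof.} Everything will be reduced to the preceding proposition, that $P$ is gr-projective exactly when $P$ is graded and projective as an ungraded $R$-module, together with the suspension functor $M \mapsto M(i)$ of Definition~\ref{sus}. Two facts will be used throughout: (i) for graded $R$-modules $M,N$, an $R$-linear map $M \to N$ homogeneous of degree $i$ is the same datum as a degree-$0$ map of graded modules $M \to N(i)$, so an element of $\uHom_R(M,N)$ is exactly a \emph{finite} sum of such data; and (ii) since the suspension of a gr-free module is gr-free, $P$ is gr-projective if and only if each suspension $P(i)$ is gr-projective. Write $\mathcal{C}$ for the category whose objects are graded $R$-modules and whose hom-sets are the groups $\uHom_R(-,-)$; its epimorphisms are the surjective morphisms, so ``$P$ is projective in $\mathcal{C}$'' means precisely that every $\uHom_R$-morphism out of $P$ lifts along every surjective $\uHom_R$-morphism.

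For the direction ``projective in $\mathcal{C}$ $\Rightarrow$ gr-projective'': given a surjection $\pi\colon M \to N$ in $R$-gr and a degree-$0$ map $g\colon P \to N$, view them in $\mathcal{C}$ and lift $g$ to some $h \in \uHom_R(P,M)$ with $\pi h = g$; decomposing $h$ into its finitely many homogeneous components and comparing degree-$0$ parts (using that $\pi$ and $g$ are homogeneous of degree $0$) shows that the degree-$0$ component of $h$ is already a lift in $R$-gr, so $P$ is gr-projective. For the converse, reduce first to $P$ gr-free: $P$ is a retract of a gr-free module in $R$-gr, hence in $\mathcal{C}$, and a retract of a $\mathcal{C}$-projective is $\mathcal{C}$-projective. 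Now $R = R(0)$ is finitely generated, so $\uHom_R(R,-) = \Hom_R(R,-)$ is the exact forgetful functor and $R$ is projective in $\mathcal{C}$; each shift $R(n)$ is isomorphic to $R$ in $\mathcal{C}$ (the two ``identity'' maps $R(n) \leftrightarrow R$ are mutually inverse homogeneous morphisms), hence is $\mathcal{C}$-projective; and a gr-free module is handled by lifting on a homogeneous basis one element at a time, choosing preimages with uniformly bounded degree support so that the resulting lift lies again in $\uHom_R$.

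The delicate point---the one I expect to be the main obstacle---is exactly this last step when the surjection $\pi = \sum_s \pi_{(s)}$ is genuinely non-homogeneous and the gr-free module has infinite rank, since a naive choice of preimages of the $g_{(t)}(e_\alpha)$ contributes ``cross terms'' in unwanted degrees. One repairs this by factoring $\pi = \hat\pi \circ \iota$, where $\hat\pi\colon \bigoplus_s M(-j_s) \to N$ is the degree-$0$ surjection assembled from the homogeneous components of $\pi$ and $\iota$ is the diagonal $\uHom_R$-morphism with $\hat\pi\iota = \pi$, while checking that all lifts stay finite sums of homogeneous maps; alternatively, if one reads ``$P$ is projective in the $\uHom_R$-category'' as ``$\uHom_R(P,-)$ is an exact functor on $R$-gr'', the corollary is immediate, for $\uHom_R(P,-) = \bigoplus_i \Hom_R(P(-i),-)$ (a degree-$i$ map being a degree-$0$ map out of the $i$-fold suspension), which is exact if and only if each $P(-i)$ is gr-projective, i.e.\ by (ii) if and only if $P$ is gr-projective.
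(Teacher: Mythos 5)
Your first direction (a lift in the $\uHom_R$-category has a degree-zero homogeneous component which is already a lift in $R$-gr) is fine, as is the reduction to gr-free modules and the finitely generated case, where any preimage of a single element automatically has finite degree support. But the step you flag as ``the main obstacle'' is not merely delicate --- the strong statement you are trying to prove there, that a gr-free module of infinite rank lifts against an \emph{arbitrary} surjective $\pi=\sum_s \pi_s\in\uHom_R(M,N)$, is false, so no choice of preimages and no factoring can save it. Take $R=\kf$ concentrated in degree $0$. For each $d\ge 1$ let $M^{(d)}$ have homogeneous basis $v_0,\dots,v_{d-1},u$ with $|v_i|=i$ and $|u|=d$, let $N^{(d)}$ have basis $w_0,\dots,w_d$ with $|w_i|=i$, and set $\pi(v_i)=w_i-w_{i+1}$, $\pi(u)=w_d$; this is the sum of a degree-$0$ and a degree-$1$ map and is bijective on each gadget. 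Summing over $d$ gives a surjective $\pi=\pi_0+\pi_1\in\uHom_R(M,N)$ for which the \emph{unique} preimage of $w_0^{(d)}$ is $v_0+\cdots+v_{d-1}+u$, with nonzero components in every degree $0,\dots,d$. For the gr-free module $F=\bigoplus_d Re_d$ with $|e_d|=0$ and the degree-$0$ map $g(e_d)=w_0^{(d)}$, any $R$-linear lift has infinitely many nonzero homogeneous components and so does not lie in $\uHom_R(F,M)$. Note also that your proposed repair is a non sequitur even formally: a lift of $g$ through $\hat\pi$ need not factor through the diagonal $\iota$, so it produces no lift through $\pi=\hat\pi\circ\iota$.

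The corollary is therefore correct only under the weaker reading you offer in your last clause, which is also the reading the surrounding text requires: by \ref{les} the resolutions to which $\uHom_A(\_,\kf)$ is applied may be taken to have degree-zero differentials, so ``projective in the $\uHom_R$-category'' should mean that $\uHom_R(P,\_)$ is exact on short exact sequences in $R$-gr (equivalently, that $P$ lifts, via $\uHom_R$-morphisms, against degree-zero surjections). Under that reading your closing argument is the right proof and should be promoted to the main one: $\uHom_R(P,N)_i=\Hom_{R\text{-}gr}(P(-i),N)$, so $\uHom_R(P,\_)=\bigoplus_i \Hom_{R\text{-}gr}(P(-i),\_)$, which is exact if and only if every suspension $P(-i)$ is gr-projective, if and only if $P$ is; conversely, exactness of $\uHom_R(P,\_)$ gives exactness of its degree-zero summand $\Hom_{R\text{-}gr}(P,\_)$, recovering gr-projectivity. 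I would discard the lifting-against-arbitrary-surjections route entirely and keep only this.
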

We can do similar argument about gr-flat modules. 
\begin{definition}
A graded module $M$ is gr-flat if the functor $\_ \otimes_{\Z} M$ is exact. \end{definition}
We can show that $M$ is gr-flat if and only if $M$ is graded and flat, c.f. \cite[2.12.11]{NV}. 

Given a graded algebra $A$, we want to compute its cohomology, which will turn out to be a bigraded ring, 
that is $\HH(A, \kf)$. 
Since a gr-projective module is the same as a projective and graded module, a gr-projective resolution of $\kf$ as a graded $A$-module will also be a
projective resolution of $\kf$ as an $A$-module. 
We then apply $\Hom_{A}(\_, \kf)$ to compute $\Ext^*_{A}(\kf, \kf)$ in order to compute the usual cohomology of $A$, without keeping 
track of the grading. 

If instead we want to compute the graded cohomology, we would need to apply the functor $\uHom_{A}(\_, \kf)$. 
 Let $$\xymatrix{ \cdots \ar[r]^{d_2} & P_1 \ar[r]^{d_1} & P_0 \ar[r]^{d_0} & \kf \ar[r] & 0}$$
be a graded projective resolution of $\kf$ as an $A$-gr-module. We then apply the functor $\uHom_A(\_, \kf)$ to the resolution to get complex
$$\xymatrix{0 \ar[r] & \uHom_A(P_0, \kf) \ar[r]^{\delta_0} & \uHom_A(P_1, \kf) \ar[r]^{\delta_1} & \uHom_A(P_2, \kf) \ar[r]^{\delta_2} & \cdots}$$
where $\delta_i: \uHom_A(P_i, \kf) \to \uHom_A(P_{i+1}, \kf)$ is given by $\delta_i(f)(p)= f(d_{i+1}(p))$ for $p \in P_{i+1}$ and $f \in \uHom_A(P_i, \kf)$. 

The \textit{$n^{th}$ graded cohomology} is defined by $H^n(A, \kf) = \frac{\ker(\delta_n)}{\im(\delta_{n-1})}$.

If $A$ is  gr-Noetherian then by \ref{Ext} we would get the same cohomology as the ungraded one. 

\begin{proposition}\label{les}
 If $$\xymatrix{\cdots \ar[r]^{f_3} & M_2 \ar[r]^{f_2} & M_1 \ar[r]^{f_1} & M_0}$$ is a sequence of graded maps where each $f_i$ may be of 
a nonzero degree, then this sequence is equivalent to one where the maps are all graded of zero degree.  
\end{proposition}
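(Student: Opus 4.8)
The plan is to absorb the nonzero degrees of the maps into suspensions of the modules, using the $i$-suspension functor of Definition \ref{sus}. The underlying observation I would use is that, for graded $R$-modules $M$ and $N$, a graded map $f \colon M \to N$ of degree $n$ is the very same function as a degree-zero graded map $M \to N(n)$: since $f(M_j) \subseteq N_{j+n} = N(n)_j$, passing from one description to the other changes no data, only the labelling of internal degrees on the target.

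So, given the sequence with $f_i \colon M_i \to M_{i-1}$ of degree $d_i$, I would first set $e_0 = 0$ and define $e_i$ recursively by $e_i = e_{i-1} - d_i$, i.e.\ $e_i = -(d_1 + \cdots + d_i)$, and then replace $M_i$ by $N_i := M_i(e_i)$. The one computation to carry out is that each $f_i$, now viewed as a function $N_i \to N_{i-1}$, is graded of degree zero; this is immediate from the suspension convention $M(i)_j = M_{i+j}$, since
$$f_i\bigl((N_i)_j\bigr) = f_i\bigl((M_i)_{e_i+j}\bigr) \subseteq (M_{i-1})_{e_i+j+d_i} = (M_{i-1})_{e_{i-1}+j} = (N_{i-1})_j,$$
using $e_i + d_i = e_{i-1}$. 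This produces a sequence
$$\xymatrix{\cdots \ar[r]^{f_3} & N_2 \ar[r]^{f_2} & N_1 \ar[r]^{f_1} & N_0}$$
in which all maps are graded of degree zero.

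To conclude that this new sequence is \emph{equivalent} to the original one, I would note that each suspension $(-)(e_i)$ is an exact $R$-linear automorphism of the category of graded $R$-modules that leaves the underlying ungraded module and the map $f_i$ literally unchanged; hence the two sequences have the same kernels, images, and (co)homology, up to the evident relabelling of internal degrees. The same argument applies verbatim to a cochain complex whose differentials have nonzero degrees — in particular to the complex $\uHom_A(P_\bullet, \kf)$ used above to define $H^n(A,\kf)$ — which is the situation we actually care about.

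I do not expect a genuine obstacle here. The only point requiring care is the sign in the recursion $e_i = e_{i-1} - d_i$, which is forced by the convention $M(i)_j = M_{i+j}$, together with the requirement that the chosen suspensions be mutually compatible along the whole sequence — and this compatibility is exactly what the recursive definition of the $e_i$ guarantees.
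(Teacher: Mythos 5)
Your proof is correct and takes essentially the same route as the paper, which simply invokes the suspension functor together with the identification $\uHom_R(M,N)_i = \Hom_{R\text{-}gr}(M, N(i))$; your recursive choice $e_i = -(d_1+\cdots+d_i)$ and the degree check are exactly the details that argument leaves implicit. No gap.
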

\begin{proof}
 This follows from the fact that the suspension \ref{sus} is a functor and that the suspension of a gr-projective module is still gr-projective. 
We also use that $\uHom_R(M, N)_i = \Hom_{R-gr}(M, N(i)) = \Hom_{R-gr}(M(-n), N)$. 
\end{proof}

\begin{rk}
 The above observations tell us that given a graded algebra $A$, we can assume our category to be the one of graded $A$ modules,
where the morphisms are graded maps of various degrees. Then when computing the cohomology, the gr-projective resolution can consist of maps
of degree zero and to the resolution we apply the functor $\uHom_A(\_, \kf)$. 
\end{rk}

\begin{proposition}
 Let $A$ be a graded algebra then $\HH(A,\kf) = \uExt^{*,*}_A(\kf, \kf)$ is a bigraded $\kf$-module.
\end{proposition}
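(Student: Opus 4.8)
The plan is to read off the two gradings directly from the construction of $\uExt$ given above and to verify that every map involved respects them. First I would fix a gr-projective resolution $P_\bullet \to \kf$ of $\kf$ by graded $A$-modules in which, as permitted by Proposition~\ref{les} and the remark following it, every differential $d_n$ is a graded map of degree zero. Applying the functor $\uHom_A(\_,\kf)$ produces the cochain complex $(\uHom_A(P_\bullet,\kf),\delta_\bullet)$ displayed above, and each term $\uHom_A(P_n,\kf) = \bigoplus_m \uHom_A(P_n,\kf)_m$ is, by Definition~\ref{uHom}, a $\Z$-graded $\kf$-vector space: the internal degree $m$ of a bihomogeneous element is the degree of the corresponding homogeneous $A$-linear map $P_n \to \kf$, and the $\kf$-action comes from the target $\kf$, which is concentrated in degree zero.

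Next I would observe that each coboundary $\delta_n$ is $\kf$-linear and homogeneous of internal degree zero. Indeed $\delta_n(f) = f \circ d_{n+1}$, and since $d_{n+1}$ is a graded map of degree zero, precomposition with $d_{n+1}$ carries $\uHom_A(P_n,\kf)_m$ into $\uHom_A(P_{n+1},\kf)_m$ for every $m$. Hence $\ker(\delta_n)$ and $\im(\delta_{n-1})$ are $\Z$-graded $\kf$-submodules of $\uHom_A(P_n,\kf)$ with respect to the internal degree, so each $H^n(A,\kf) = \uExt^n_A(\kf,\kf) = \ker(\delta_n)/\im(\delta_{n-1})$ inherits an internal $\Z$-grading; writing $H^{n,m}(A,\kf)$ for its degree-$m$ piece and summing over the homological index $n$ gives $\HH(A,\kf) = \bigoplus_{n,m} H^{n,m}(A,\kf)$, a bigraded $\kf$-module whose first grading is the cohomological degree (the position in the resolution) and whose second grading is the internal degree coming from $\uHom$.

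It then remains to check that this bigrading is independent of the chosen resolution, and here I would invoke the comparison theorem for projective resolutions inside the category of graded $A$-modules with degree-zero morphisms. The classical lifting argument applies verbatim, since a gr-projective module has the lifting property against graded surjections (this is exactly the property recorded earlier in the appendix); thus any two gr-projective resolutions of $\kf$ by degree-zero maps are linked by mutually inverse chain maps, unique up to chain homotopy, all consisting of graded maps of internal degree zero. Applying $\uHom_A(\_,\kf)$ yields $\kf$-linear isomorphisms on cohomology that preserve the internal degree, so $\HH(A,\kf)$ is a well-defined bigraded $\kf$-module.

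I do not anticipate a genuine obstacle; the only points needing care are bookkeeping ones — that it is $\uHom$, rather than $\Hom$, that retains the internal grading in general (the two agree once the $P_n$ are finitely generated, but $\uHom$ is what one wants in all cases), and that the comparison theorem can be run entirely through degree-zero graded maps, so that the internal grading on $\uExt$ is canonical rather than merely one choice among many.
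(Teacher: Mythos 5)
Your proof is correct and follows essentially the same route as the paper: fix a gr-projective resolution with degree-zero differentials and observe that $\delta_n = (\_) \circ d_{n+1}$ preserves the internal degree of $\uHom$, so kernel and image are graded submodules and the cohomology inherits the bigrading. The paper verifies this by decomposing $f = \sum_s f_s$ and checking on homogeneous elements that each $f_s$ lies in the kernel, which is the same fact stated element-wise; your added remark on independence of the resolution is a reasonable extra check the paper leaves implicit.
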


\begin{proof}
Let  $$\xymatrix{ \cdots \ar[r]^{d_2} & P_1 \ar[r]^{d_1} & P_0 \ar[r]^{d_0} & \kf \ar[r] & 0}$$
be a graded projective resolution of $\kf$ as an $A$-gr-module. By \ref{les} we can assume that the maps $d_i$ are graded of degree zero.
Let $f = \sum_s f_s \in \uHom_A(P_n, \kf)$, where $f_s \in \uHom_A(P_n,\kf)_s$, that is $f_s:P_n \to \kf$ is a map of degree $s$.
We want to show that if $f \in \ker(\delta_n)$ then each $f_s$ is in the kernel as well. 
Hence $\ker(\delta_n)$ is generated by homogeneous elements of $\uHom_A(P_n, \kf)$. 

For $p \in P_{n+1}$,
$\delta_n(f)(p) = f(d_{n+1}(p)) = \sum_s f_s(d_{n+1}(p))$, and each $f_s \circ d_{n+1}$ is a map of degree $s$. Now $f \in \ker(\delta_n)$
implies that $\delta_n(f)(p) = 0$ for all $p \in P_{n+1}$. 

On a homogeneous element $p$ of $P_{n+1}$, $$\delta_n(f_s)(p) = \left\{
	\begin{array}{ll}
		\lambda_s & \mbox{if }  |p| = -s \\
		0  & \mbox{otherwise }
	\end{array}
\right.$$
Hence $$\delta_n(f)(p) = \sum_s \delta_n(f_s)(p) = \left\{
	\begin{array}{ll}
		\lambda_s  & \mbox{if }  |p| = -s \\
		0  & \mbox{otherwise }
	\end{array}
\right.$$

This gives that if $f \in \ker(\delta_n)$ then $\lambda_s = 0$, thus $\delta_n(f_s)(p)=0$ for each homogeneous elements; therefore $f_s \in \ker(\delta_n)$ 
as well. 

We want to show that $\im(\delta_n)$ is also generated by homogeneous elements of $\uHom_A(P_{n+1}, \kf)$. 
In this case $f = \sum f_s \in \uHom_A(P_n, \kf)$ and $\delta_n(f)(p) = \sum \delta_n(f_s)$ and clearly each $\delta_n(f_s)$ is homogeneous 
of degree $s$. Therefore we have $\uExt^n_A(\kf, \kf) = \bigoplus_s \uExt^{n,s}_A(\kf, \kf)$. That is, a $\zeta \in H^n(A, \kf)$ corresponds to a map $\widehat{\zeta}: P_n \to \kf$ of degree $s$. 
\end{proof}

Similarly we can define the graded cohomology of $A$ with coefficients in $M$ by $\HH(A,M)$ where $M$ is a graded $A$-module.
  
We now give other characterizations for $\HH(A,\kf)$.

\begin{definition}
Given two graded modules $M$, $N$ a \textit{graded $n$-extension of $M$ to $N$} is an exact sequence of graded maps of various degrees of the form
$$\xymatrix{0 \ar[r] & N \ar[r] & M_{n-1} \ar[r] & \cdots \ar[r]& M_0 \ar[r] & M \ar[r] & 0 }$$
\end{definition}

\begin{definition}
 Two graded $n$-extensions are \textit{equivalent} if there exist graded maps, of possibly nonzero degree,
$h_0, \ldots, h_{n-1}$ such that the diagram commutes.

$$\xymatrix{0 \ar[r] & N \ar[r]^{g_n} \ar[d]^=  & M_{n-1} \ar[d]^{h_{n-1}} \ar[r]^{g_{n-1}} & \cdots \ar[r] \ar[d] & M_0 \ar[r]^{g_0} \ar[d]^{h_0} & M \ar[r] \ar[d]^= & 0 \\
0 \ar[r] & N \ar[r]^{g'_n} & M_{n-1}' \ar[r]^{g'_{n-1}} & \cdots \ar[r]& M_0' \ar[r]^{g'_0} & M \ar[r] & 0 }$$
Two equivalent extension will have as a consequence that  $\sum |g_i| = \sum |g'_i|$. Using that it 
can be shown that two equivalent extensions correspond to the same element in $\uExt^{n,s}(M,N)$, where $s = -\sum |g_i|$. We complete this to an equivalence relation by symmetry and transitivity in the usual way. 
\end{definition}

\begin{proposition}
An equivalence class of graded $n$-extension of $M$ to $N$ of the form 
$$\xymatrix{0 \ar[r] & N \ar[r]^{g_{n}} & M_{n-1} \ar[r]^{g_{n-1}} & \cdots \ar[r]^{g_1} & M_0 \ar[r]^{g_0} & M \ar[r] & 0 }$$
corresponds to an element in $\uExt^{n, s}(M,N)$ where $s = -\sum_{i = 0}^n |g_i|$.
\end{proposition}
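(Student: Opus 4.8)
The plan is to run the graded analogue of the Yoneda correspondence between $n$-extensions and elements of $\Ext^n$, paying attention to internal degrees at each step. Fix once and for all a graded projective resolution $\cdots \to P_1 \xrightarrow{d_1} P_0 \xrightarrow{d_0} M \to 0$, which by \ref{les} we may take to have all $d_i$ of degree zero. Given an $n$-extension
$$0 \to N \xrightarrow{g_n} M_{n-1} \xrightarrow{g_{n-1}} \cdots \xrightarrow{g_1} M_0 \xrightarrow{g_0} M \to 0,$$
I would build a comparison map: set $h_{-1} = \mathrm{id}_M$ and inductively produce graded maps $h_i : P_i \to M_i$, for $0 \le i \le n-1$, with $g_i h_i = h_{i-1} d_i$. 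Each $h_{i-1}d_i$ factors through $\ker(g_{i-1}) = \im(g_i)$, and since $P_i$ is gr-projective the surjection $M_i \to \im(g_i)$ (of degree $|g_i|$) admits a graded lift; an easy induction then shows $h_i$ may be taken homogeneous of degree $-\sum_{j=0}^{i}|g_j|$. Finally $h_{n-1}d_n : P_n \to M_{n-1}$ lands in $\im(g_n)$ and, $g_n$ being injective, factors uniquely as $g_n\circ\widehat{\zeta}$ with $\widehat{\zeta} : P_n \to N$ homogeneous of degree $-\sum_{j=0}^{n}|g_j| = s$; moreover $\widehat{\zeta}d_{n+1}=0$ because $g_n$ is injective and $h_{n-1}d_nd_{n+1}=0$. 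So $\widehat\zeta$ is a degree-$s$ cocycle and defines a class in $\uExt^{n,s}(M,N)$.

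Next I would check this class does not depend on the choices made. Two sequences of lifts $\{h_i\}$ differ by a graded chain homotopy of the relevant degree, so the corresponding $\widehat\zeta$'s differ by a coboundary in $\uHom_A(P_\bullet,N)$ of degree $s$. For two \emph{equivalent} $n$-extensions, the remark preceding the statement already gives $\sum|g_i| = \sum|g'_i|$, so the two cocycles live in the same degree $s$, and chasing the equivalence diagram (splicing the comparison maps for the two extensions through the equivalence maps $h'_j$) shows they are cohomologous. Hence the assignment descends to a well-defined map from equivalence classes of graded $n$-extensions of $M$ to $N$ into $\uExt^{n,s}(M,N)$.

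To get a genuine correspondence I would then exhibit the inverse, exactly as in the ungraded Yoneda theory. Starting from a degree-$s$ cocycle $\widehat\zeta : P_n \to N$, one forms the pushout of $\widehat\zeta$ and $d_n : P_n \to P_{n-1}$ and splices in the truncated resolution to obtain an $n$-extension
$$0 \to N \to \bigl(P_{n-1}\oplus_{P_n}N\bigr) \to P_{n-2}\to\cdots\to P_0 \to M \to 0.$$
The degree-$s$ shift carried by $\widehat\zeta$ is absorbed into exactly one structure map of this extension, which one makes precise by working with a suspension $P_n(-s)$ in place of $P_n$ — using that suspension is an exact functor preserving gr-projectives and that $\uHom_R(-,-)_i = \Hom_{R-gr}(-(-i),-)$, as recorded around \ref{les}. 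A routine diagram chase, identical to the ungraded one apart from tracking degrees, shows the two constructions are mutually inverse on equivalence classes.

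The main obstacle I expect is purely the degree bookkeeping, in two places: first, justifying that each lift $h_i$ can be taken \emph{homogeneous} of the stated degree rather than a sum of components of several degrees (this reduces to gr-projectivity of $P_i$ applied after a suspension); and second, in the reverse construction, normalizing the pushout by the correct suspension so that the degrees of the extension maps sum to $-s$ rather than merely being mutually consistent. Both rest on the facts already assembled in the appendix — $\uHom$ decomposes by degree, suspension is functorial and exact, and suspensions of gr-projectives are gr-projective — so the argument is expected to be a careful but not deep adaptation of the classical statement.
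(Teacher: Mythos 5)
Your argument is essentially the paper's own proof: both lift the graded projective resolution against the extension via the comparison theorem and track the degrees to see that the final map $P_n \to N$ is homogeneous of degree $s = -\sum_{i=0}^{n}|g_i|$. Your additional verification of independence of choices and the pushout construction of the inverse are details the paper leaves implicit, and your degree bookkeeping for the lifts $h_i$ is correct.
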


\begin{proof}
We use the fact that the gr-projective resolution is in fact a projective resolution and we can get maps from the resolution to the extension via the usual 
discussion of extensions and projective resolutions in the ungraded case.
$$\xymatrix{\cdots \ar[r] & P_n \ar[r]^{d_n} \ar[d]^{f_n} & P_{n-1} \ar[r]^{d_{n-1}} \ar[d]^{f_{n-1}} & \cdots \ar[r] & P_0 \ar[r]^{d_0} \ar[d]^{f_0}  & M \ar[r] \ar[d]^= & 0\\
0 \ar[r] & N \ar[r]^{g_{n}} & M_{n-1} \ar[r]^{g_{n-1}} & \cdots \ar[r]^{g_1} & M_0 \ar[r]^{g_0} & M \ar[r] & 0 }$$

It can be checked that these maps can be assumed to be graded and the last map $f_n$ will then have degree $s = -\sum_{i = 0}^n |g_i|$ 
and will correspond to an element in $\uExt^{n,s}(M, N)$. 
 \end{proof}

We relate an element in $ \zeta \in \uExt^{n,s}(M, N)$ with a map $\widehat{\zeta} \in \Omega^n(M) \to N$. Here we will be working on the stable category. 

\begin{proposition}
 Let $\zeta \in \uExt^{n, s}(M, N)$, then there is a map of degree $s$, $\widehat{\zeta}: \Omega^n(M) \to N$ 
corresponding to $\zeta$, if two such maps exist they correspond to equivalent extensions. Where $\Omega^n(M)$ is well-defined up to projective summands. 
\end{proposition}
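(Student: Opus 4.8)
The plan is to realize $\uExt^{n,s}_A(M,N)$ by dimension shifting along a fixed graded projective resolution and to identify it with graded maps out of the $n$-th syzygy, modulo those that factor through a gr-projective. First I would fix a graded projective resolution $\cdots \to P_1 \xrightarrow{d_1} P_0 \xrightarrow{d_0} M \to 0$, which by Proposition~\ref{les} may be taken with all $d_i$ of degree zero, and set $\Omega^0(M) = M$ and $\Omega^n(M) = \ker(d_{n-1} \colon P_{n-1} \to P_{n-2})$ for $n \geq 1$; by exactness this equals the image $d_n(P_n)$, an honest graded submodule of $P_{n-1}$. That $\Omega^n(M)$ is independent of the resolution up to graded projective summands is the graded form of Schanuel's lemma: given graded short exact sequences $0 \to K \to P \to M \to 0$ and $0 \to K' \to P' \to M \to 0$ with $P, P'$ gr-projective, the graded pullback $P \times_M P'$ surjects onto $P$ and $P'$ with kernels $K'$ and $K$ respectively, and projectivity splits both sequences, yielding a graded isomorphism $K \oplus P' \cong K' \oplus P$; iterating handles all $n$ (the usual proof is degree-preserving).

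Next, given $\zeta \in \uExt^{n,s}_A(M,N)$ represented, via the proposition above, by a graded $n$-extension $0 \to N \xrightarrow{g_n} M_{n-1} \to \cdots \to M_0 \xrightarrow{g_0} M \to 0$ with $s = -\sum_{i=0}^{n} |g_i|$, I would apply the comparison theorem in graded form to lift $\mathrm{id}_M$, getting graded maps $f_i \colon P_i \to M_{i-1}$ and $f_n \colon P_n \to N$, where $f_n$ necessarily has degree $s$. Commutativity of the square lying over $d_{n+1}$ — the extension being $0$ in homological degree $n+1$ — forces $f_n \circ d_{n+1} = 0$, so $f_n$ vanishes on $\ker(d_n) = d_{n+1}(P_{n+1})$ and therefore factors through a graded map $\widehat{\zeta} \colon \Omega^n(M) \to N$ of degree $s$ (recall $\Omega^n(M) = d_n(P_n)$). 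Conversely, given a degree-$s$ map $\widehat{\zeta} \colon \Omega^n(M) \to N$, splicing the truncated resolution $0 \to \Omega^n(M) \to P_{n-1} \to \cdots \to P_0 \to M \to 0$ with the pushout of $\Omega^n(M) \hookrightarrow P_{n-1}$ along $\widehat{\zeta}$ yields an $n$-extension representing $\zeta$; this is the inverse construction.

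For well-definedness I would use that any two graded lifts of $\mathrm{id}_M$ to the resolution are graded chain homotopic, so two choices of $f_n$ differ by a map that factors through the gr-projective $P_{n-1}$, whence the induced maps $\Omega^n(M) \to N$ agree modulo maps through gr-projectives; dually, equivalent $n$-extensions (in the sense already defined) produce chain-homotopic lifts, so the correspondence descends to equivalence classes. Replacing the resolution changes $\Omega^n(M)$ only by a gr-projective summand, and since a graded map out of such a summand into $N$ is stably trivial, $\widehat{\zeta}$ is well-defined up to this ambiguity — which is precisely the assertion that $\Omega^n(M)$ is well-defined up to projective summands. The main obstacle I anticipate is the bookkeeping of degrees: checking that the graded comparison maps, the graded chain homotopies, and the graded Schanuel isomorphisms can all be arranged with the predicted degrees, so that $\widehat{\zeta}$ has degree exactly $s = -\sum_{i=0}^{n} |g_i|$, and that adding or deleting gr-projective summands of $\Omega^n(M)$ genuinely does not perturb the stable class of $\widehat{\zeta}$.
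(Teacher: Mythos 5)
Your construction is essentially the paper's: both lift the identity along the graded projective resolution to the given $n$-extension and read off a degree-$s$ map on $\Omega^n(M)$, the only cosmetic difference being that you factor $f_n\colon P_n\to N$ through $P_n/\ker(d_n)\cong\im(d_n)=\Omega^n(M)$, while the paper restricts $f_{n-1}$ to $\im(d_n)\subset P_{n-1}$ and de-suspends --- the same map under the canonical identification, with the same degree count $s=-\sum_{i=0}^n|g_i|$. Your added detail on well-definedness (graded Schanuel, chain homotopies, the pushout inverse) correctly fleshes out what the paper dismisses with ``as in the ungraded case.''
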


\begin{proof}
 Recall that given a projective resolution of $M$, 
$$\xymatrix{\cdots \ar[r] & P_n \ar[r]^{d_n} & P_{n-1} \ar[r]^{d_{n-1}} & \cdots \ar[r] & P_0 \ar[r]^{d_0}  & M \ar[r] & 0,}$$

 $\Omega^n(M) = \ker(d_{n-1}) = \im(d_n) \subset P_{n-1}$. An element $\zeta \in \uExt^{n,s}(M,N)$ 
corresponds to an extension, where the following diagram commutes. 
$$\xymatrix{\cdots \ar[r] & P_n \ar[r]^{d_n} \ar[d]^{\zeta} & P_{n-1} \ar[r]^{d_{n-1}} \ar[d]^{f_{n-1}} & \cdots \ar[r] & P_0 \ar[r]^{d_0} \ar[d]^{f_0}  & M \ar[r] \ar[d]^= & 0\\
0 \ar[r] & N \ar[r]^{g_{n}} & M_{n-1} \ar[r]^{g_{n-1}} & \cdots \ar[r]^{g_1} & M_0 \ar[r]^{g_0} & M \ar[r] & 0 }$$

Let $id(-|g_n|): N(|g_n|) \to N$ the map of degree $-|g_n|$ that suspends back $N(|g_n|)$ to $N$, that is $id(-|g_n|)(N(|g_n|) = N$. 
We can then define $\widehat{\zeta} = id(-|g_n|) \circ f_{n-1}|_{\Omega^n(M)}$. Since $|f_{n-1}| = -\sum_{i = 0}^{n-1} |g_i|$, then 
$|\widehat{\zeta}| = -|g_n| -\sum_{i = 0}^{n-1} |g_i| = -\sum_{i = 0}^{n} |g_i| = s$ as desired. The last statement of the 
proposition can be proven as in the ungraded case. 
\end{proof}

There are several ways of describing the ring structure on $\HH(A, \kf)$. One way is doing the \emph{Yoneda splice} of an extension. We use the Yoneda splice to prove \ref{HHring}. 

\begin{definition}
Let $A$ be a graded algebra and let $M, M'$ and $M''$ be graded $A$-modules. Let $\zeta \in \uExt^{n,s}(M, M')$ and $\eta \in \uExt^{m,r}(M', M'')$, then $\zeta$ and $\eta$ correspond to the following extensions:
$$\xymatrix{\zeta: 0 \ar[r] & M' \ar[r]^{g_n} & M_{n-1} \ar[r]^{g_{n-1}} & \cdots \ar[r]^{g_1} & M_0 \ar[r]^{g_0} & M \ar[r] & 0\\
\eta: 0 \ar[r] & M'' \ar[r]^{g'_m} & M'_{m-1} \ar[r]^{g'_{m-1}} & \cdots \ar[r]^{g'_1} & M'_0 \ar[r]^{g_0} & M' \ar[r] & 0.}$$

Then $s = - \sum |g_i|$ and $r = - \sum |g'_i|$. The \emph{Yoneda splice} is defined as the  sequence given by 

\begin{center}
\begin{tikzpicture}
\node (A) at (0,0) {$M'$};
\node (B) at (-1,1) {$M'_0$};
\node (C) at (1,1) {$M_{n-1}$};
\node (D) at (-2.5,1) {$\cdots$};
\node (E) at (-4,1) {$M'_{m-1}$};
\node (F) at (-5.5,1) {$M''$};
\node (G) at (-6.5,1) {$0$};
\node (H) at (2.5,1) {$\cdots$};
\node (I) at (4,1) {$M_0$};
\node (J) at (5.5,1) {$M$};
\node (K) at (6.5,1) {$0$};
\node (L) at (-1,-1) {$0$};
\node (M) at (1,-1) {$0$};
\path[->] (G) edge (F);
\path[->] (F) edge node[above]{$g'_m$}(E);
\path[->] (E) edge node[above]{$g'_{m-1}$}(D);
\path[->] (D) edge node[above]{$g'_1$}(B);
\path[->] (B) edge node[left]{$g'_0$}(A);
\path[->] (A) edge node[right]{$g_n$}(C);
\path[->] (C) edge node[above]{$g_{n-1}$}(H);
\path[->] (H) edge node[above]{$g_1$}(I);
\path[->] (I) edge node[above]{$g_0$}(J);
\path[->] (J) edge (K);
\path[->] (L) edge (A);
\path[->] (A) edge (M);
\path[->] (B) edge (C);
\end{tikzpicture}
\end{center}

Then the Yoneda splice gives an element $\eta \circ \zeta \in \uExt^{n+m, s+r}(M, M'')$.  
\end{definition}

Note that the Yoneda splice makes $\uExt^{*,*}(M, M)$ into a ring, hence we get the following result. 

\begin{theorem}\label{HHring}
 Let $A$ be a graded algebra then $\HH(A,\kf) = \uExt^{*,*}_A(\kf, \kf)$ is a bigraded ring with $H^{n,s}(A,\kf) = \uExt^{n,s}_A(\kf, \kf)$, 
where $n$ is the cohomological degree and $s$ is the internal degree.
\end{theorem}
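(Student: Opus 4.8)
The plan is to equip $\uExt^{*,*}_A(\kf,\kf)$ with the Yoneda splice product and verify that it is compatible with the bigrading. The identification $H^{n,s}(A,\kf)=\uExt^{n,s}_A(\kf,\kf)$ together with the decomposition $\uExt^n_A(\kf,\kf)=\bigoplus_s\uExt^{n,s}_A(\kf,\kf)$ is already in hand from the preceding propositions, so the only content to establish is the ring structure. I would begin by recalling the two equivalent models of $\uExt^{n,s}_A(\kf,\kf)$ set up above: a class is represented either by a graded $n$-extension $0\to\kf\xrightarrow{g_n}M_{n-1}\to\cdots\to M_0\xrightarrow{g_0}\kf\to 0$ with $s=-\sum_{i=0}^n|g_i|$, or by a map $\widehat\zeta:\Omega^n(\kf)\to\kf$ of degree $s$, well defined up to projective summands.

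Next I would take the multiplication to be the Yoneda splice specialized to $M=M'=M''=\kf$. Given $\zeta\in\uExt^{n,s}$ and $\eta\in\uExt^{m,r}$, splicing the two representing extensions produces a graded $(n+m)$-extension of $\kf$ to $\kf$ whose structure maps are precisely the $g_i$ and $g'_j$ together with the identity identification at the splice point, which has degree zero; hence its internal degree is $-\bigl(\sum|g_i|+\sum|g'_j|\bigr)=s+r$, so $\eta\circ\zeta\in\uExt^{n+m,s+r}_A(\kf,\kf)$ and $H^{n,s}\cdot H^{m,r}\subseteq H^{n+m,s+r}$. For the remaining ring axioms: distributivity follows because the Baer sum of two graded $n$-extensions of bidegree $(n,s)$ is again a graded $n$-extension of bidegree $(n,s)$ (the pullback and pushout used to form it do not alter the degrees of the structure maps) and splicing is additive in each variable; associativity holds up to the canonical, degree-preserving identification of iterated splices, exactly as in the ungraded theory; and the unit is the class of the length-zero extension $0\to\kf\xrightarrow{\mathrm{id}}\kf\to 0$, which lies in $\uExt^{0,0}_A(\kf,\kf)$ since $|\mathrm{id}|=0$. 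This yields the bigraded ring $\uExt^{*,*}_A(\kf,\kf)$ with $n$ the cohomological and $s$ the internal degree.

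The step requiring the most care is the graded bookkeeping for well-definedness on equivalence classes and for compatibility with the additive structure, since the underlying ungraded ring structure on $\uExt^{*,*}(\kf,\kf)$ is classical. Concretely one must invoke the observation recorded above that equivalent graded $n$-extensions have the same $\sum|g_i|$, hence the same internal degree; check that a morphism of extensions splices to a morphism of the spliced extensions, so that the product descends to equivalence classes; and, to make associativity and unitality transparent, translate into the map model, where the product of $\widehat\zeta:\Omega^n(\kf)\to\kf$ of degree $s$ with $\widehat\eta:\Omega^m(\kf)\to\kf$ of degree $r$ is the composite $\Omega^{n+m}(\kf)=\Omega^m(\Omega^n(\kf))\xrightarrow{\Omega^m(\widehat\zeta)}\Omega^m(\kf)\xrightarrow{\widehat\eta}\kf$, which has degree $s+r$ and agrees with the splice. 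With this in place the theorem is a direct consequence of the preceding propositions and the definition of the Yoneda splice.
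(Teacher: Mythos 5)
Your proposal is correct and follows the same route as the paper: the bigraded module structure comes from the preceding propositions, and the ring structure is given by the Yoneda splice, which adds both the cohomological degree and the internal degree, so that $\uExt^{n,s}\cdot\uExt^{m,r}\subseteq\uExt^{n+m,s+r}$. In fact you supply more detail than the paper, which simply records the bidegree additivity of the splice and asserts the theorem as an immediate consequence; your explicit checks of well-definedness on equivalence classes, distributivity via Baer sums, and the translation to the $\widehat\zeta:\Omega^n(\kf)\to\kf$ model are all consistent with the machinery the paper sets up.
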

Let $A$ be a gr-Hopf algebra and $C$ and $D$ be graded $A$-modules. Then $(C \otimes D)$ is a graded $A$-module as well, where $(C \otimes D)_n = \bigoplus_{i+j = n} C_i \otimes D_j$. The action of $A$ on $C\otimes D$ is defined the following way: for $a \in A$, write $\Delta(a) = \sum a_1 \otimes a_2$, then $a \cdot (x \otimes y) = \Delta(a)(x \otimes y) = \sum (-1)^{|a_2||x|} a_1x \otimes a_2 y$.

\begin{lemma}\label{comhom}
Let $A$ be a cocommutative gr-Hopf algebra and $C$ and $D$ be graded $A$-modules. Let $T: C\otimes D \to D \otimes C$ be the map given by $x \otimes y \mapsto (-1)^{|x||y|}y \otimes x$ for $x$ and $y$ homogeneous elements. Then $T$ is a graded $A$-module morphism. 
\end{lemma}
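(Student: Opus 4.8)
The plan is to verify directly that $T$ is $A$-linear; everything else is immediate. Indeed $T$ is bijective (its inverse is the analogous twist $D\otimes C\to C\otimes D$) and it preserves degree, since the scalar $(-1)^{|x||y|}$ does not shift gradings, so the only content of the lemma is that $T(a\cdot\xi)=a\cdot T(\xi)$ for $a\in A$ and $\xi\in C\otimes D$. Because $T$ and both module actions are additive, it suffices to check this on homogeneous $a\in A$, $x\in C$, $y\in D$. Since $\Delta$ has degree $0$ we may write $\Delta(a)=\sum a_1\otimes a_2$ with all $a_1,a_2$ homogeneous and $|a_1|+|a_2|=|a|$, which is what legitimizes the sign manipulations below.

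First I would expand the left-hand side using the action on $C\otimes D$ recalled just before the lemma:
$$T\big(a\cdot(x\otimes y)\big)=T\Big(\sum(-1)^{|a_2||x|}\,a_1x\otimes a_2y\Big)=\sum(-1)^{|a_2||x|+(|a_1|+|x|)(|a_2|+|y|)}\,a_2y\otimes a_1x.$$
Next I would expand the right-hand side:
$$a\cdot T(x\otimes y)=(-1)^{|x||y|}\,a\cdot(y\otimes x)=(-1)^{|x||y|}\sum(-1)^{|a_2||y|}\,a_1y\otimes a_2x.$$
These two expressions feature $a_2y\otimes a_1x$ on one side and $a_1y\otimes a_2x$ on the other, so they cannot be matched termwise as written; reconciling them is precisely where cocommutativity must enter.

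Then I would invoke cocommutativity of $A$ in the form $\sum a_1\otimes a_2=\sum(-1)^{|a_1||a_2|}\,a_2\otimes a_1$, which lets me rewrite the right-hand side as $(-1)^{|x||y|}\sum(-1)^{|a_1||a_2|+|a_1||y|}\,a_2y\otimes a_1x$. It now remains to compare the two sign exponents on the common summand $a_2y\otimes a_1x$, i.e.\ to check
$$|a_2||x|+(|a_1|+|x|)(|a_2|+|y|)\;\equiv\;|x||y|+|a_1||a_2|+|a_1||y|\pmod 2.$$
Expanding the left-hand side gives $|a_1||a_2|+|a_1||y|+|x||y|+2|a_2||x|$, and the term $2|a_2||x|$ drops out mod $2$, so the congruence holds. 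This proves $T(a\cdot(x\otimes y))=a\cdot T(x\otimes y)$, hence $T$ is an $A$-module morphism (and, being bijective and degree-preserving, an isomorphism of graded $A$-modules).

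The only obstacle is the Koszul-sign bookkeeping, and the single conceptual point is recognizing that cocommutativity is used exactly to turn the summand $a_1y\otimes a_2x$ into (a signed multiple of) $a_2y\otimes a_1x$; once the two sides are written over the same summands, the sign identity collapses to discarding an even term.
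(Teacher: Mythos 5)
Your proof is correct and follows essentially the same route as the paper: a direct check of $A$-linearity on homogeneous elements, using cocommutativity in the form $\sum a_1\otimes a_2=\sum(-1)^{|a_1||a_2|}a_2\otimes a_1$ to line up the summands, after which the Koszul signs agree modulo $2$. Your version is if anything slightly more explicit than the paper's one-line chain of equalities, since you expand both sides separately and exhibit the cancellation of the even term $2|a_2||x|$.
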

\begin{proof}
Recall that $A$ is cocommutative if for any homogeneous element $a$, $\Delta(a) = \sum a_1 \otimes a_2 = \sum (-1)^{|a_1||a_2|} a_2 \otimes a_1$. We just need to write out and check the sign conventions on homogeneous elements. 
\begin{eqnarray*}
T(a \cdot (x \otimes y)) & = & T( \sum (-1)^{|a_2||x|} a_1 x \otimes a_2 y)\\
&=& \sum (-1)^{|a_2||x|+|a_1x||a_2y|} a_2y \otimes a_1x \\
&=& \left(\sum (-1)^{|a_1||a_2|}a_1 \otimes a_2 \right) (-1)^{|x||y|} y \otimes x \\
&=& \Delta(a) T(x \otimes y) \\
&=& a \cdot T(x \otimes y)
\end{eqnarray*}\end{proof}

\begin{definition}\label{prodcom}
Let $\bld{C}$ and $\bld{D}$ be complexes of graded $A$-modules. The \emph{tensor product of complexes} $\bld{C} \otimes \bld{D}$ is defined as, $(\bld{C} \otimes \bld{D})_n = \bigoplus_{i + j} C_i \otimes D_j$, where $\delta^{\bld{C} \otimes \bld{D}}(x \otimes y) = \delta^{\bld{C}}(x) \otimes y +(-1)^{\tot(x)} x\otimes \delta^{\bld{D}}(y)$. A homogeneous element $x \otimes y \in (\bld{C} \otimes \bld{D})_n$ corresponds to $x \in C^i_l, y \in D^j_k$ where $i+j = n$ and $C^i_l$ and $D^j_k$ are the degree $l$ and $k$ part of $C^i$ and $D^j$ respectively. Then $\tot(x) = i+l$ is the total degree of $x$. 
\end{definition}

\begin{proposition}\label{prodcomp}
Let $A$ be a cocommutative gr-Hopf algebra, then the tensor product of graded $A$-complexes is gr-commutative, in the sense that given $\bld{C}$ and $\bld{D}$ graded $A$-complexes, the map $T: \bld{C} \otimes \bld{D} \to \bld{D} \otimes \bld{C}$, given by $x \otimes y \mapsto  (-1)^{\tot(x)\tot(y)} y \otimes x$ is an isomorphism of graded complexes. 
\end{proposition}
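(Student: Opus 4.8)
The plan is to establish the three ingredients bundled into the claim that $T$ is an isomorphism of complexes of graded $A$-modules: that $T$ is a bijective graded map, that it is $A$-linear on each chain group, and that it is compatible with the differentials.

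First I would dispatch the formal points. The bare swap $x\otimes y\mapsto y\otimes x$ carries $C^i_l\otimes D^j_k$ to $D^j_k\otimes C^i_l$, so inserting the scalar $(-1)^{\tot(x)\tot(y)}$ produces a map $T$ preserving the cohomological degree, the internal degree, and hence the total degree. Its two-sided inverse is the analogous map $T'\colon \bld D\otimes\bld C\to\bld C\otimes\bld D$, $y\otimes x\mapsto(-1)^{\tot(y)\tot(x)}x\otimes y$, because both $T'\circ T$ and $T\circ T'$ multiply a homogeneous tensor by $(-1)^{2\tot(x)\tot(y)}=1$. Hence $T$ is a graded bijection, and it suffices to check $A$-linearity and chain-map compatibility.

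Next I would verify $A$-linearity of $T$ on each piece $C^i\otimes D^j\to D^j\otimes C^i$ by rerunning the computation of Lemma~\ref{comhom} with the total degree $\tot$ in place of the internal degree: write $\Delta(a)=\sum a_1\otimes a_2$, expand $T(a\cdot(x\otimes y))$ and $a\cdot T(x\otimes y)$ on homogeneous $a,x,y$, and match them using cocommutativity of $A$ in the form $\sum a_1\otimes a_2=\sum(-1)^{|a_1||a_2|}a_2\otimes a_1$. Since every element of $A$ lies in cohomological degree zero we have $\tot(a)=|a|$, so the cocommutativity identity applies unchanged, the extra contributions produced by replacing $|\cdot|$ with $\tot$ all carry an even exponent and disappear, and the residual sign identity is exactly the one already verified in Lemma~\ref{comhom}.

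Finally I would check the chain-map identity $T\circ\delta^{\bld C\otimes\bld D}=\delta^{\bld D\otimes\bld C}\circ T$. Using $\delta^{\bld C\otimes\bld D}(x\otimes y)=\delta^{\bld C}(x)\otimes y+(-1)^{\tot(x)}x\otimes\delta^{\bld D}(y)$ together with $\tot(\delta x)=\tot(x)+1$ and the definition of $T$, both sides collapse to the same sum of the two terms $(-1)^{\tot(x)\tot(y)+\tot(y)}\,y\otimes\delta^{\bld C}(x)$ and $(-1)^{\tot(x)\tot(y)}\,\delta^{\bld D}(y)\otimes x$; here the congruences $\tot(x)+\tot(x)(\tot(y)+1)\equiv\tot(x)\tot(y)$ and $(\tot(x)+1)\tot(y)\equiv\tot(x)\tot(y)+\tot(y)$ mod $2$ are precisely what force the signs to agree. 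Combining this with the previous two steps shows $T$ is an isomorphism of complexes of graded $A$-modules. The only genuine obstacle I anticipate is clerical: keeping the Koszul signs consistent, and in particular reconciling the internal-degree convention normalizing the $A$-action on a tensor product with the total-degree signs carried by $T$ and by $\delta^{\bld C\otimes\bld D}$ — but in each case the discrepancy has even exponent, so nothing beyond careful bookkeeping is involved.
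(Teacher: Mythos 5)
Your proof follows essentially the same route as the paper's: $A$-linearity of the swap is obtained from Lemma~\ref{comhom}, and the chain-map identity is verified by expanding $T\circ\delta^{\bld{C}\otimes\bld{D}}$ and $\delta^{\bld{D}\otimes\bld{C}}\circ T$ and checking that the exponents agree mod $2$, which is exactly the paper's computation. The only difference is that you explicitly flag (and rerun) the replacement of the internal degree $|\cdot|$ by the total degree $\tot(\cdot)$ when importing Lemma~\ref{comhom}, a point the paper passes over silently, and you add the (trivial) bijectivity check; both are harmless refinements of the same argument.
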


\begin{proof}
By \ref{comhom} $T$ is a map of graded $A$-modules. 
We are left to show that it is an isomorphism of graded $A$-complexes. 
We follow the diagram 
$$\xymatrix{ \cdots \ar[r] & (C \otimes D)_{n+1} \ar[r]^{\delta^{\bld{C} \otimes \bld{D}}_{n+1} } \ar[d]^T & (C \otimes D)_n \ar[r]^{\delta^{\bld{C} \otimes \bld{D}}_n}\ar[d]^T & (C \otimes D)_{n-1} \ar[r]^-{\delta^{\bld{C} \otimes \bld{D}}_{n-1}} \ar[d]^T& \cdots \\
\cdots \ar[r] & (D \otimes C)_{n+1} \ar[r]^{\delta^{\bld{D} \otimes \bld{C}}_{n+1} }& (D \otimes C)_n \ar[r]^{\delta^{\bld{D} \otimes \bld{C}}_n} & (D \otimes C)_{n-1} \ar[r]^-{\delta^{\bld{D} \otimes \bld{C}}_{n-1}} & \cdots.}$$

Note that
$$T \circ \delta^{\bld{C} \otimes \bld{D}} (x \otimes y) = (-1)^{(\tot(x)-1)\tot(y)}y \otimes \delta^{\bld{C}}(x) + (-1)^{\tot(x)+\tot(x)(\tot(y)-1)}\delta^{\bld{D}}(y) \otimes x,$$
and 
$$ \delta^{\bld{D} \otimes \bld{C}} \circ T (x \otimes y) = (-1)^{\tot(y)\tot(x)}(\delta^{\bld{D}}(y) \otimes x +(-1)^{\tot(y)}y \otimes \delta^{\bld{C}}(x)).$$
are equal since the signs have the same parity. 
\end{proof}

\begin{theorem}\label{grcomcoh}
Let $A$ be a cocommutative gr-Hopf algebra, then $\HH(A, \kf)$ is a gr-commutative ring with respect the total degree. 
\end{theorem}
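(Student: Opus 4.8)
The plan is to compute the product on $\HH(A,\kf)=\uExt^{*,*}_A(\kf,\kf)$ by a \emph{cup product} built from the comultiplication of $A$, rather than directly from the Yoneda splice, and then to read off the commutation rule from the twist map $T$ of \ref{prodcomp}. First I would fix, using \ref{les}, a gr-projective resolution $P_\bullet\to\kf$ with differentials of degree zero. Since $\kf$ is a field, each $P_i\otimes_\kf P_j$ is again gr-projective, and, by the graded K\"unneth theorem, $P_\bullet\otimes P_\bullet$ --- with the $A$-action through $\Delta$ described before \ref{comhom} and the total-degree Koszul differential of \ref{prodcom} --- is a gr-projective resolution of $\kf\otimes_\kf\kf\cong\kf$ in the category of graded $A$-modules. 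By the comparison theorem for projective resolutions there is a diagonal approximation $\nabla\colon P_\bullet\to P_\bullet\otimes P_\bullet$, i.e.\ a chain map of graded $A$-complexes lifting the canonical isomorphism $\kf\xrightarrow{\sim}\kf\otimes\kf$, and $\nabla$ is unique up to graded chain homotopy.

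Given $\zeta\in\uExt^{n,s}(\kf,\kf)$ and $\eta\in\uExt^{m,r}(\kf,\kf)$, represented by cocycles $\widehat\zeta\colon P_n\to\kf$ of degree $s$ and $\widehat\eta\colon P_m\to\kf$ of degree $r$ (equivalently by the maps $\Omega^n(\kf)\to\kf$, $\Omega^m(\kf)\to\kf$ of the appendix), I would set $\zeta\smile\eta$ to be the class of the composite $P_{n+m}\xrightarrow{\nabla}(P_\bullet\otimes P_\bullet)_{n+m}\twoheadrightarrow P_n\otimes P_m\xrightarrow{\widehat\zeta\otimes\widehat\eta}\kf\otimes\kf=\kf$. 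The first substantive step is the standard identification $\zeta\smile\eta=\eta\circ\zeta$ of this cup product with the Yoneda splice product that defines the ring structure in \ref{HHring}; this is the usual Eilenberg--Zilber / interchange argument, and it goes through verbatim once one checks that the signs match, which they do precisely because the tensor product of complexes in \ref{prodcom} already carries the total-degree Koszul sign.

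For gr-commutativity, consider the twist $T\colon P_\bullet\otimes P_\bullet\to P_\bullet\otimes P_\bullet$, $x\otimes y\mapsto(-1)^{\tot(x)\tot(y)}\,y\otimes x$. By \ref{comhom} (which is where cocommutativity of $A$ is used) it is a morphism of graded $A$-modules, and by \ref{prodcomp} it is an isomorphism of graded $A$-complexes; on the degree-zero part it is the identity of $\kf\otimes\kf\cong\kf$. Hence $T\circ\nabla$ is again a chain map of graded $A$-complexes lifting $\kf\xrightarrow{\sim}\kf\otimes\kf$, so $T\circ\nabla\simeq\nabla$ by uniqueness of the diagonal approximation up to homotopy. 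Computing $\zeta\smile\eta$ with $T\circ\nabla$ in place of $\nabla$ and tracking the Koszul sign produced by $T$ --- swapping an object of cohomological degree $n$ and internal degree $s$ past one of cohomological degree $m$ and internal degree $r$ --- yields $(-1)^{(n+s)(m+r)}\,\eta\smile\zeta=(-1)^{\tot(\zeta)\tot(\eta)}\,\eta\smile\zeta$; since $T\circ\nabla$ and $\nabla$ induce the same map on cohomology, $\zeta\smile\eta=(-1)^{\tot(\zeta)\tot(\eta)}\eta\smile\zeta$. Combined with the previous paragraph this is exactly gr-commutativity of $\HH(A,\kf)$ with respect to the total degree.

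I expect the main obstacle to be the sign bookkeeping: verifying that the total-degree Koszul sign built into \ref{prodcom} is precisely the sign that appears both in the Yoneda-versus-cup comparison and when commuting $T$ past $\widehat\zeta\otimes\widehat\eta$, and that no spurious sign is introduced by the internal degrees $s,r$ when the cocycles are moved past chain degrees. A secondary point to spell out is that $P_\bullet\otimes P_\bullet$ really is a gr-projective resolution of $\kf$ in the graded $A$-module category, which uses that $\kf$ is a field (for exactness), that a tensor product of gr-projectives over $\kf$ is gr-projective, and that $\Delta$ makes the tensor product a complex of $A$-modules.
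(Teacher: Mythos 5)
Your proposal is correct and follows essentially the same route as the paper: both form the tensor square of a gr-projective resolution of $\kf$ (the paper phrases this on the cochain side as $\bld{C}\otimes\bld{C}$), identify the resulting cup product with the Yoneda splice, and deduce gr-commutativity in the total degree from the fact that the twist $T$ is an isomorphism of graded $A$-complexes via \ref{comhom} and \ref{prodcomp}. The only difference is that you make explicit the diagonal approximation and its uniqueness up to graded chain homotopy, which the paper leaves implicit.
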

\begin{proof}
Let  $$\xymatrix{ \cdots \ar[r] & P_1 \ar[r] & P_0 \ar[r] & \kf \ar[r] & 0}$$
be a graded projective resolution of $\kf$ as an $A$-gr-module and let $\bld{C}$ be the complex 
$$\xymatrix{0 \ar[r] & \uHom_A(P_0, \kf) \ar[r]^{\delta_0} & \uHom_A(P_1, \kf) \ar[r]^{\delta_1} & \uHom_A(P_2, \kf) \ar[r]^{\delta_2} & \cdots}.$$

Then $\bld{C} \otimes \bld{C}$ still corresponds to $\uHom$ applied to a gr-projective resolution of $\kf$.  Moreover, if $x \in C^s_n= \uHom(P_n, \kf)_s$ and $y \in  C^r_m = \uHom(P_m,\kf)_r$ then $x \otimes y \in \uHom(P_n \otimes P_m, \kf)_{s+r} \subset (C \otimes C)_{n+m}^{s+r}$, Hence the tensor product of the complex $\bld{C}$ makes $\HH(A, \kf)$ into a ring.  In fact this product on $\uExt$ is called the \emph{cup product} and in the case of $\HH(A, \kf)$ it coincides with the Yoneda splice (for more details refer to \cite[3.2]{BE1}). We can check that $\delta^{\bld{C} \otimes \bld{C}}(x \otimes y) = \delta^{\bld{C}}(x) \otimes y +(-1)^{\tot(x)} x\otimes \delta^{\bld{C}}(y)$ passes down to a well defined product on $\HH(A, \kf)$.

Let $T: \bld{C} \otimes \bld{C} \to \bld{C} \otimes \bld{ C}$ as in \ref{prodcom},  then by \ref{prodcomp} $T$ is an isomorphism of graded $A$-complexes.  Hence $\HH(A, \kf)$ is a gr-commutative ring with respect the total degree as desired. 
\end{proof}

\begin{rk}
Graded comodules over a gr-commutative Hopf algebra of finite type $B$ correspond to  gr-modules over  its dual $B^\#$ which is a gr-cocommutative Hopf algebra. 

Hence, when working in the category of graded $B$-comodules, $\uExt_B^{*,*}(\kf, \kf)$ is gr-commutative by \ref{grcomcoh}. 
\end{rk}

\bibliography{../resources/biblio}
\bibliographystyle{amsalpha}

\end{document}